\newtheorem{question}{Question}[section]
\newtheorem{lemma}[question]{Lemma}
\newtheorem{theorem}[question]{Theorem}
\newtheorem{conjecture}[question]{Conjecture}
\newcommand{\leqnomode}{\tagsleft@true}
\newcommand{\reqnomode}{\tagsleft@false}
\def\dd{\hbox{-}}
\DeclareMathOperator{\tw}{tw}
\DeclareMathOperator{\width}{width}
\DeclareMathOperator{\Hub}{Hub}
\DeclareMathOperator{\hdim}{hdim}
\DeclareMathOperator{\Core}{Core}
\newcounter{tbox}
\newcommand{\sta}[1]{\vspace*{0.3cm} \refstepcounter{tbox}
  \noindent{ \parbox{\textwidth}{(\thetbox) \emph{#1}}}\vspace*{0.3cm}}
\newcommand{\mylongtitle}[1]{%
  \ifodd\value{page}%
    \protect\parbox{0.97\linewidth}{#1}\hfill%
  \else%
    \hfill\protect\parbox{0.97\linewidth}{#1}%
  \fi%
}
\newcommand{\otherlabel}[2]{\protected@edef\@currentlabel{#2}\label{#1}}
\mathchardef\mh="2D
\title[Induced subgraphs and tree decompositions X.]{Induced subgraphs and tree decompositions\\
X. Towards logarithmic treewidth for even-hole-free graphs}
\author{Tara Abrishami$^{\ast \dagger}$}
\author{Bogdan Alecu$^{\ast \ast \mathparagraph}$}
\author{Maria Chudnovsky$^{\ast \dagger}$}
\author{Sepehr Hajebi $^{\mathsection}$}
\author{Sophie Spirkl$^{\mathsection \parallel}$}
\address{$^{\ast}$Princeton University, Princeton, NJ, USA}
\address{$^{\mathparagraph}$ Supported by DMS-EPSRC Grant EP/V002813/1.}
\address{$^{**}$School of Computing, University of Leeds, Leeds, UK}
\address{$^{\mathsection}$Department of Combinatorics and Optimization, University of Waterloo, Waterloo, Ontario, Canada}
\address{$^{\dagger}$ Supported by NSF-EPSRC Grant DMS-2120644 and by AFOSR grant FA9550-22-1-0083.}
\address{$^{\parallel}$ We acknowledge the support of the Natural Sciences and Engineering Research Council of Canada (NSERC), [funding reference number RGPIN-2020-03912].
Cette recherche a \'et\'e financ\'ee par le Conseil de recherches en sciences naturelles et en g\'enie du Canada (CRSNG), [num\'ero de r\'ef\'erence RGPIN-2020-03912]. This project was funded in part by the Government of Ontario.}
\date {\today}
\begin{document}

\maketitle
\begin{abstract}
  A generalized $t$-pyramid is  a graph obtained from a certain kind of tree
  (a subdivided star or a subdivided cubic caterpillar) and  the line graph of a
  subdivided cubic caterpillar by identifying simplicial vertices.
  We prove that for every integer $t$ there exists a constant $c(t)$ such that
  every $n$-vertex even-hole-free graph with no clique of size $t$ and no
  induced subgraph isomorphic to a generalized $t$-pyramid has treewidth at most $c(t)\log{n}$. This settles a
  special case of a conjecture of  Sintiari and Trotignon; this bound is also best possible for the class.
  It follows that several \textsf{NP}-hard problems such as \textsc{Stable Set}, \textsc{Vertex Cover}, \textsc{Dominating Set} and \textsc{Coloring} admit polynomial-time algorithms on this class of graphs.
  Results from this paper are also used in later papers of the series, in particular to solve the full version of the Sintiari-Trotignon conjecture.
\end{abstract}

\section{Introduction} \label{intro}
All graphs in this paper are finite and simple. Let $G = (V(G),E(G))$ be a graph. For a set $X \subseteq V(G)$, we denote by $G[X]$ the subgraph of $G$ induced by $X$. For $X \subseteq V(G)$, $G \setminus X$ denotes the subgraph induced by $V(G) \setminus X$. In this paper, we use induced subgraphs and their vertex sets interchangeably. In particular, for $X, Y \subseteq V(G)$ we use the notation $X \cup Y$ to mean the graph $G[X \cup Y]$ as well as the set  $X \cup Y$.
We never use $\cup$ or $\bigcup$ to denote the disjoint union of two graphs.
For a vertex $v \in V(G)$, we often write $v$ to mean $\{v\}$.

For graphs $G$ and $H$, we say that $G$ {\em contains} $H$ if some induced subgraph of $G$ is isomorphic to $H$. For a family $\mathcal{H}$ of graphs, $G$
{\em contains}
$\mathcal{H}$ if $G$ contains a member of $\mathcal{H}$. Finally,
$G$ is {\em $\mathcal{H}$-free} if $G$ does not contain $\mathcal{H}$.

Let $v \in V(G)$. The \emph{open neighborhood of $v$}, denoted by $N(v)$, is the set of all vertices in $V(G)$ adjacent to $v$. The \emph{closed neighborhood of $v$}, denoted by $N[v]$, is $N(v) \cup \{v\}$. Let $X \subseteq V(G)$. The \emph{open neighborhood of $X$}, denoted by $N(X)$, is the set of all vertices in $V(G) \setminus X$ with at least one neighbor in $X$. The \emph{closed neighborhood of $X$}, denoted by $N[X]$, is $N(X) \cup X$. If $H$ is an induced subgraph of $G$ and $X \subseteq V(G)$, then $N_H(X)=N(X) \cap H$ and $N_H[X]=N_H(X) \cup (X \cap H)$. Let $Y \subseteq V(G)$ be disjoint from $X$. We say $X$ is \textit{complete} to $Y$ if all possible edges with an end in $X$ and an end in $Y$ are present in $G$, and $X$ is \emph{anticomplete}
to $Y$ if there are no edges between $X$ and $Y$.

For a graph $G = (V(G),E(G))$, a \emph{tree decomposition} $(T, \chi)$ of $G$ consists of a tree $T$ and a map $\chi: V(T) \to 2^{V(G)}$ with the following properties: 
\begin{enumerate}[(i)]
\itemsep -.2em
    \item For every $v \in V(G)$, there exists $t \in V(T)$ such that $v \in \chi(t)$. 
    
    \item For every $v_1v_2 \in E(G)$, there exists $t \in V(T)$ such that $v_1, v_2 \in \chi(t)$.
    
    \item For every $v \in V(G)$, the subgraph of $T$ induced by $\{t \in V(T) \mid v \in \chi(t)\}$ is connected.
\end{enumerate}
 
For each $t\in V(T)$, we refer to $\chi(t)$ as a \textit{bag of} $(T, \chi)$.  The \emph{width} of a tree decomposition $(T, \chi)$, denoted by $\width(T, \chi)$, is $\max_{t \in V(T)} |\chi(t)|-1$. The \emph{treewidth} of $G$, denoted by $\tw(G)$, is the minimum width of a tree decomposition of $G$. The term ``treewidth'' and the study of the structure of graphs with large treewidth were introduced by Robertson and Seymour \cite{RS-GMII} as part of the Graph Minors series.

A {\em hole} in a graph is an induced cycle with at least four vertices. The
{\em length} of a hole is the number of vertices in it. A hole is {\em even} it it has even length, and {\em odd} otherwise. The class of even-hole-free graphs has been studied extensively (see the survey \cite{Kristina}), but many open questions remain. Among them are several algorithmic problems: \textsc{Stable Set}, \textsc{Vertex Cover}, \textsc{Dominating Set}, $k$-\textsc{Coloring} and \textsc{Coloring}. The structural complexity of this class of graphs is further evidenced by the fact that there exist
even-hole-free graphs of arbitrarily large 
tree-width \cite{ST} (even when the clique number is bounded).  Closer examination of the  constructions of \cite{ST} led the authors of \cite{ST} to make the following two conjectures (the {\em diamond} is the unique simple graph with four vertices and five edges):

\begin{conjecture}[Sintiari and Trotignon \cite{ST}]
\label{diamond}
For every integer $t$, there exists a constant $c_t$ such that every
even-hole-free graph $G$ with no diamond and no clique of size $t$ satisfies
$\tw(G)\leq c_t$. 
\end{conjecture}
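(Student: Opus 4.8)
The plan is to construct, for every even-hole-free, diamond-free, $K_t$-free graph $G$ (the disconnected case reduces immediately to the connected one), a tree decomposition of width $O(t)$. The first step is the standard fact that clique cutsets preserve treewidth: if $C\subseteq V(G)$ is a clique such that $G\setminus C$ has a partition into two non-empty anticomplete parts $A$ and $B$, then $\tw(G)=\max(\tw(G[A\cup C]),\tw(G[B\cup C]))$, because every clique of a graph lies in a bag of each of its tree decompositions, so tree decompositions of $G[A\cup C]$ and $G[B\cup C]$ may be glued along bags containing $C$. Since every induced subgraph of $G$ is again even-hole-free, diamond-free and $K_t$-free, iterating this reduces the task to bounding the treewidth of graphs in the class with \emph{no} clique cutset.

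For these I would invoke the structure theorem of Kloks, M\"{u}ller and Vu\v{s}kovi\'{c} for (even-hole, diamond)-free graphs: a connected such graph with no clique cutset is either \emph{basic}---drawn from an explicitly described list (cliques, holes, and certain families of circular-arc-type graphs, together with finitely many small exceptions)---or it has a \emph{bisimplicial cutset}, that is, a cutset that is the union of two cliques. Every basic graph has treewidth $O(t)$: cliques have treewidth less than $t$, holes have treewidth $2$, and a circular-arc graph of clique number $\omega$ has treewidth at most $2\omega-1$, since deleting the at most $\omega$ vertices whose arcs pass through a fixed point of the circle leaves an interval graph of clique number at most $\omega$ and hence treewidth at most $\omega-1$. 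The decisive point is that diamond-freeness forces every neighbourhood of $G$ to be a disjoint union of cliques, so in the $K_t$-free setting every bisimplicial cutset---being a union of two cliques---has at most $2(t-1)$ vertices; in particular it is bounded.

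The difficulty is that recursing on one bisimplicial cutset $S$ at a time only yields $\tw(G)\le\max_i\tw(G_i)+|S|$, and these additive terms could accumulate over a recursion of depth $\Theta(|V(G)|)$. The main obstacle is therefore to package the whole recursion into a single tree decomposition of width $O(t)$. Concretely, I would decompose $G$ recursively along clique cutsets and bisimplicial cutsets until every piece is basic, obtaining a tree $\mathcal{T}$ whose nodes are basic pieces and whose edges record the bisimplicial cutsets used; then replace each basic piece by a tree decomposition of width $O(t)$, and finally insert into the bags of a piece $H$ the (at most $2(t-1)$) vertices of each bisimplicial cutset joining $H$ to a neighbour of $H$ in $\mathcal{T}$. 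For the total width to remain $O(t)$, one must prove a \emph{localisation} statement: distinct bisimplicial cutsets attached to the same basic piece $H$ lie in ``well-separated'' parts of $H$---around distinct points of the circle when $H$ is a circular-arc graph, and for cliques and holes there are only boundedly many of them in total---so that each bag of $H$'s tree decomposition needs to absorb only $O(1)$ of these cutsets, and hence grows by only $O(t)$. Proving this localisation---equivalently, showing that a cluster of many bisimplicial cutsets inside a basic graph would force a $K_t$, an even hole, or a diamond---is the technical heart of the argument; once it is available, assembling the final tree decomposition is routine bookkeeping with the usual gluing lemmas. (An alternative packaging, working inside the balanced-separator framework and showing directly that every graph in the class has bounded-size balanced separators for all weightings, would run into the same localisation phenomenon as its crux.)
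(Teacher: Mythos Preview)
The statement you are addressing is stated in the paper as a \emph{conjecture} (Conjecture~\ref{diamond}), not as a theorem, and the paper does not prove it. The authors say explicitly that ``Conjecture~\ref{diamond} was recently proved in \cite{TWXI},'' i.e., in a separate paper of the series. There is therefore no proof in the present paper to compare your proposal against.

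On the substance of your proposal: even granting the Kloks--M\"{u}ller--Vu\v{s}kovi\'{c} decomposition in the form you state it, you correctly identify the crux and then do not resolve it. Decomposing along bounded-size cutsets and recursing does not by itself give bounded treewidth; what is needed is that at each basic piece the attached cutsets are ``spread out'' so that only $O(1)$ of them hit any single bag. You call this the ``localisation'' step and label it the technical heart, but you give no argument for it, and there is no reason a priori that many bisimplicial cutsets cannot cluster in a short arc of a circular-arc basic graph. So as written this is a plan with the main lemma missing, not a proof. The actual proof in \cite{TWXI} does not go through a global structure theorem of this kind; it belongs to the induced-subgraphs-and-tree-decompositions framework developed across the series and analyzes the local structure (wheels, hubs, balanced separators) of even-hole-free graphs of large treewidth, in the spirit of the machinery set up in Sections~\ref{cutsets}--\ref{sec:centralbag} of the present paper.
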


\begin{conjecture}[Sintiari and Trotignon \cite{ST}]
\label{logconj}
For every integer $t$, there exists a constant $c_t$ such that every
even-hole-free graph $G$ with no clique of size $t$ satisfies
$\tw(G)\leq c_t \log |V(G)|$. 
\end{conjecture}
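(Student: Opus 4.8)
Fix an integer $t$ and let $\mathcal{C}_t$ be the (hereditary) class of even-hole-free graphs with no clique of size $t$; the goal is $\tw(G)\le c_t\log|V(G)|$ for all $G\in\mathcal{C}_t$. Since for any graph the treewidth and the \emph{balanced-separator number} --- the least $k$ such that every $W\subseteq V(G)$ admits a set of at most $k$ vertices whose deletion leaves every component with at most $|W|/2$ vertices of $W$ --- agree up to an absolute multiplicative constant (this is standard; see \cite{Kristina} and the references therein for its use in the even-hole-free setting), it suffices to bound the balanced-separator number of every $G\in\mathcal{C}_t$ by $O_t(\log|V(G)|)$. The main theorem of this paper already does this whenever $G$ has no induced generalized $t$-pyramid; the new work is to handle graphs that contain one, by showing that a generalized $t$-pyramid inside an even-hole-free $K_t$-free graph is itself a source of good separations. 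So fix $G\in\mathcal{C}_t$ and $W\subseteq V(G)$; I describe how to produce a $W$-balanced separator of size $O_t(\log|V(G)|)$, arguing by induction on $|V(G)|$.

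\textbf{Reductions.} We may assume $G$ is connected. If $G$ has a clique cutset $K$ --- necessarily $|K|\le t-1$ --- then either $K$ itself is a $W$-balanced separator, or exactly one component $C$ of $G\setminus K$ has more than $|W|/2$ vertices of $W$ and $K$ together with a $(W\cap C)$-balanced separator of the smaller graph $G[K\cup C]$ works; in either case we are done by induction. So assume $G$ has no clique cutset. If moreover $G$ has no induced generalized $t$-pyramid, the main theorem of this paper finishes the proof. Hence assume $G$ is connected, has no clique cutset, and contains an induced generalized $t$-pyramid $P$.

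\textbf{Exploiting the pyramid.} A generalized $t$-pyramid is obtained from a subdivided star or subdivided cubic caterpillar, together with the line graph of a subdivided cubic caterpillar, by identifying simplicial vertices; concretely it is built from a bounded number of long induced paths (``legs'') attached at a bounded number of ``ends''. The key step is an \emph{attachment analysis}: in an even-hole-free $K_t$-free graph, the vertices of $V(G)\setminus P$ can interact with a leg of $P$ only in a severely restricted way, since a connected piece of $V(G)\setminus P$ with two far-apart neighbours on a leg, or two pieces attached to a leg in a ``crossing'' pattern, would close an even hole together with a sub-path of the leg, while too many pairwise-adjacent pieces would create a $K_t$. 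Quantifying this, one aims to show that after deleting an $O_t(\log|V(G)|)$-sized set $Z$ every component of $G\setminus Z$ meets only a bounded fraction of $P$; then at least one component has been cut off from part of $P$, and one recurses --- on each component $C$ of $G\setminus Z$ (together with the bounded part of $Z$ it sees, using the decomposition theory of even-hole-free graphs via star cutsets and $2$-joins, see \cite{Kristina}, to certify that the pieces are genuinely simpler), with $W$ replaced by $W\cap C$ --- assembling a $W$-balanced separator of $G$ from the $Z$'s produced at all levels.

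\textbf{Main obstacle.} I expect essentially all the difficulty to lie in two interlocking places, and both are exactly where the generalized $t$-pyramid must do real work. The first is the attachment analysis itself: proving that an even-hole-free $K_t$-free graph attaches to a long leg of a generalized $t$-pyramid only in boundedly many, boundedly deep ways --- the generalized $t$-pyramid (subdivided stars and caterpillars together with line graphs of subdivided caterpillars) is designed precisely so that this holds, but extracting a usable $O_t(\log n)$ bound, uniformly as one descends into the pieces, is substantial. The second is the assembly: the separators peeled off at successive levels must not accumulate to superlogarithmic total size, i.e.\ the final bound has to be $\log n$, not $\log^2 n$. This is precisely the obstacle that $2$-joins pose, since a $2$-join on its own yields no small separator; one must argue that the $2$-joins surviving near $P$ in a clique-cutset-free even-hole-free $K_t$-free graph are ``tame'' enough --- non-crossing, of bounded depth --- to be peeled without cost. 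That the bound $c_t\log n$ is best possible for $\mathcal{C}_t$ is witnessed by the layered-wheel constructions of \cite{ST}.
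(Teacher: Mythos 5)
The first thing to note is that the statement you are proving is Conjecture~\ref{logconj}, which this paper explicitly does \emph{not} prove: the paper establishes only the special case in which generalized $t$-pyramids are additionally excluded (Theorems~\ref{logconj_pyramid} and \ref{main}), and its introduction states that the full conjecture is proved by a different set of authors in the forthcoming \cite{TWXV}. So there is no proof of this statement in the paper to compare against, and what you have produced is not a proof either.

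Your write-up is a strategy sketch whose load-bearing steps are all left open. The reduction to the connected, clique-cutset-free case is routine, and invoking Theorem~\ref{main} when no generalized $t$-pyramid is present is of course fine. But everything else --- the ``attachment analysis'' asserting that $V(G)\setminus P$ meets the legs of a generalized $t$-pyramid $P$ in only boundedly many boundedly deep ways, and the claim that the separators one peels off across recursion levels (including across $2$-joins) can be assembled without their sizes accumulating beyond $O_t(\log n)$ --- is stated under ``Main obstacle'' with no argument at all. These are not obstacles \emph{around} the proof; they \emph{are} the proof, and you have not supplied them. Moreover, the place where the present paper uses the generalized-$t$-pyramid hypothesis is Theorem~\ref{boundhubs}, which bounds the size of a stable set of non-hubs inside a potential maximal clique; that statement is simply false once generalized $t$-pyramids are allowed, and the paper itself remarks (end of the ``Contributions'' subsection) that the bounds of Section~\ref{sec:non-hubs} do not extend and that ``methods of completely different nature'' were needed to make further progress. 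There is no evidence in your sketch for a replacement lemma playing the role of Theorem~\ref{boundhubs} in the presence of a generalized $t$-pyramid, nor for the assertion that attachments to a single fixed induced pyramid in a graph of $\mathcal{C}_t$ are structured enough to support the recursion you describe. As written, the attempt does not close the gap between Theorem~\ref{main} and Conjecture~\ref{logconj}.
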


(In fact, \cite{ST} only states the two conjectures above for $t=4$.)
Conjecture \ref{diamond} was recently proved in \cite{TWXI}.
Here we prove a special case of
Conjecture \ref{logconj}. We remark that the full version of the conjecture
is proved, by a different set of authors, in a forthcoming paper
in the series \cite{TWXV}.
However, the contributions of the present work are of
independent interest, as we will explain later.
A {\em generalized $t$-pyramid} is a graph obtained from a certain kind of tree
(a subdivided star or a subdivided cubic caterpillar) and the line graph of a subdivided cubic caterpillar by identifying their simplicial vertices (we give a precise definition later). We prove:

\begin{theorem}
\label{logconj_pyramid}
For every  integer $t$, there exists a constant
$c_t$ such that every
even-hole-free graph $G$ with no clique of size $t$  and no generalized $t$-pyramid satisfies
$\tw(G)\leq c_t \log |V(G)|$. 
\end{theorem}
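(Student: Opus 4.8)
Write $n=|V(G)|$. The plan is to reduce Theorem~\ref{logconj_pyramid} to a statement about balanced separators and then establish that statement structurally, using the hypothesis ``no generalized $t$-pyramid'' as the main leverage. Recall that a graph $G$ has treewidth $O(k)$ as soon as for every weight function $w\colon V(G)\to\mathbb{R}_{\ge 0}$ there is a set $X\subseteq V(G)$ with $|X|\le k$ such that every component of $G\setminus X$ has $w$-weight at most $\tfrac12\,w(V(G))$ (a theorem of Dvo\v{r}\'ak and Norin, also used repeatedly in earlier papers of this series); call such an $X$ a \emph{$w$-balanced separator}. Conversely, a graph of treewidth $\Theta(\log n)$ has $w$-balanced separators of size $\Theta(\log n)$ and no smaller, so the correct target is exactly: every even-hole-free graph with no $K_t$ and no generalized $t$-pyramid admits, for every weight function $w$, a $w$-balanced separator of size at most $c_t\log n$. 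I would prove this in contrapositive form: assuming some $w$ admits no $w$-balanced separator of size $c_t\log n$ (with $c_t$ a large constant, to be fixed at the end), I will produce inside $G$ either a clique of size $t$, or an even hole, or a generalized $t$-pyramid.

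The first task is extraction. From the failure of small balanced separators --- via Menger's theorem and LP duality, or equivalently from a bramble of large order --- one obtains two ``heavy'' regions together with a family of more than $c_t\log n$ pairwise internally disjoint induced paths running between them. After a standard cleaning step (passing to a well-behaved induced subgraph) and a Ramsey-type reduction that makes the adjacency pattern between any two of these paths uniform, the paths can be taken \emph{tidy}: they pairwise interact only along a controlled interface. Here even-hole-freeness first bites: two induced paths together with a short connector can form a theta or an ``even'' prism, each of which contains an even hole, so in our graph the only surviving interactions are the parity-constrained ones --- essentially pyramids with all three path-lengths of equal parity, and odd prisms.

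I would then set up the core step as a dichotomy driven by the ``three paths to a triangle'' configuration. Whenever three of our induced paths emanate from a common clique (necessarily of size $<t$, since $G$ has no $K_t$) and reach a common triangle, even-hole-freeness forces their lengths to have matched parity, and the result is a pyramid; gluing such pyramids along the combinatorial skeleton of the linkage --- tracking the attachments by further Ramsey cleaning --- produces precisely a generalized $t$-pyramid, i.e.\ a subdivided star or subdivided cubic caterpillar meeting the line graph of a subdivided cubic caterpillar at simplicial vertices. This construction can fail only if at some stage the paths cannot be routed through a common triangle and the linkage splits; but the number of stages that can be forced before the linkage is exhausted is $O(\log n)$ --- essentially because an $n$-vertex even-hole-free graph of bounded clique number branches nontrivially only $O(\log n)$ times --- so once more than $c_t\log n$ stages are forced, the generalized $t$-pyramid appears after all. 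It is also natural to bootstrap part of this from the now-established diamond-free case (Conjecture~\ref{diamond}, proved in \cite{TWXI}): where $G$ looks locally diamond-free it already has bounded treewidth, so one may assume diamonds are abundant and use them to organize the scales.

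The main obstacle is exactly this core step: one must route a super-logarithmic family of induced paths through a controlled sequence of triangles while \emph{never} creating an even hole, and simultaneously keep the attachments among the paths clean enough that the assembled object is literally a generalized $t$-pyramid rather than some near-miss. Making the quantitative trade-off work --- extracting $\Omega(\log n)$ genuinely usable scales and running the surgery across all of them without the error terms swamping the construction --- is what fixes the bound at $\Theta(\log n)$ and meshes with the layered-wheel lower bound; the remaining bookkeeping (handling arbitrary weight functions rather than just the uniform one, and chasing constants through the Ramsey and cleaning steps) is routine but lengthy.
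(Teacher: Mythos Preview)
Your strategy diverges substantially from the paper's and, as written, has genuine gaps. The paper deduces Theorem~\ref{logconj_pyramid} immediately from the stronger Theorem~\ref{main} (every even-hole-free graph lies in $\mathcal{C}$), and Theorem~\ref{main} is \emph{not} proved by directly extracting a generalized $t$-pyramid from a large linkage. Instead the logarithm enters through a degeneracy argument on $\Hub(G)$, the set of proper-wheel centers: Theorem~\ref{logncollections} partitions $\Hub(G)$ into $O(\log n)$ stable sets $S_1,\dots,S_k$ of ``$d$-safe'' vertices. Theorem~\ref{diminduction} then runs a double induction on this hub-dimension $k$ and on $n$, using an $m$-lean tree decomposition, the central-bag constructions $\beta$ and $\beta^A(S_1)$ of Section~\ref{sec:centralbag}, and the connectifier machinery of Section~\ref{sec:connectifiers}. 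The exclusion of generalized $t$-pyramids is invoked at exactly one point (Theorem~\ref{boundhubs}, via Theorems~\ref{twosides} and~\ref{smallminimal}): a large stable set of non-hubs inside a minimal separator, sandwiched between two full components, is shown to force a generalized $t$-pyramid through the triangular-versus-spiky/stellar dichotomy of Theorem~\ref{twosides}.

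Two steps in your sketch are not justified and do not correspond to anything I can see working. First, the assertion that ``an $n$-vertex even-hole-free graph of bounded clique number branches nontrivially only $O(\log n)$ times'' is not a known fact in any interpretation that would drive your argument; in the paper the $\log n$ comes from hub-degeneracy (a theta-freeness result imported from \cite{TWIII}), not from any branching count along a linkage, and there is no evident reason your iterative routing should terminate in logarithmically many stages. Second, ``gluing such pyramids \dots\ produces precisely a generalized $t$-pyramid'' is where all the difficulty lies: a generalized $t$-pyramid requires a hole $P\cup Q$ and paths $R_i$ with $a_i$ seeing exactly two adjacent vertices of $P^*$, $b_i$ seeing exactly one vertex of $Q$, the $R_i$ pairwise anticomplete, and the attachment vertices ordered consistently along both $P$ and $Q$. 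Ramsey cleaning on a raw family of internally disjoint paths does not by itself produce these constraints; the paper achieves them only through the connectifier analysis of Section~\ref{sec:connectifiers}, which is a substantial piece of structure theory in its own right. Without a concrete mechanism for both of these steps, the proposal is a restatement of the goal rather than a route to it.
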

We remark that the construction of \cite{ST} shows that the logarithmic bound of Theorem \ref{logconj_pyramid} is best possible for this class. Furthermore,
before  Theorem \ref{logconj_pyramid}  was proved, 
the complexity of \textsc{Stable Set}, \textsc{Vertex Cover}, \textsc{Dominating Set}, $k$-\textsc{Coloring} and \textsc{Coloring} when restricted to this class of graphs was not known. 

Given a graph $G$, a {\em path in $G$} is an induced subgraph of $G$ that is a path. If $P$ is a path in $G$, we write $P = p_1 \dd \dots \dd p_k$ to mean that $V(P) = \{p_1, \dots, p_k\}$, and $p_i$ is adjacent to $p_j$ if and only if $|i-j| = 1$. We call the vertices $p_1$ and $p_k$ the \emph{ends of $P$}, and say that $P$ is a path \emph{from $p_1$ to $p_k$}. The \emph{interior of $P$}, denoted by $P^*$, is the set $V(P) \setminus \{p_1, p_k\}$. The \emph{length} of a path $P$ is the number of edges in $P$.  We denote by $C_k$ a cycle with $k$ vertices.

Next we describe a few types of graphs that we will need (see Figures \ref{fig:forbidden_isgs} and \ref{fig:kpyramids}).
\begin{figure}[t!]
\begin{center}
\begin{tikzpicture}[scale=0.29]

\node[inner sep=1.5pt, fill=black, circle] at (0, 3)(v1){}; 
\node[inner sep=1.5pt, fill=black, circle] at (-3, 0)(v2){}; 
\node[inner sep=1.5pt, fill=black, circle] at (3, 0)(v3){}; 
\node[inner sep=1.5pt, fill=black, circle] at (0, 0)(v4){}; 
\node[inner sep=1.5pt, fill=black, circle] at (0, -6)(v5){};

\draw[black, thick] (v1) -- (v2);
\draw[black, thick] (v1) -- (v3);
\draw[black, thick] (v1) -- (v4);
\draw[black, dashed, thick] (v2) -- (v5);
\draw[black, dashed, thick] (v3) -- (v5);
\draw[black, dashed, thick] (v4) -- (v5);

\end{tikzpicture}
\hspace{0.7cm}
\begin{tikzpicture}[scale=0.29]

\node[inner sep=1.5pt, fill=black, circle] at (-3, 3)(v1){}; 
\node[inner sep=1.5pt, fill=black, circle] at (0, 0)(v2){}; 
\node[inner sep=1.5pt, fill=black, circle] at (3, 3)(v3){}; 
\node[inner sep=1.5pt, fill=black, circle] at (-3, -6)(v4){}; 
\node[inner sep=1.5pt, fill=black, circle] at (0, -3)(v5){}; 
\node[inner sep=1.5pt, fill=black, circle] at (3, -6)(v6){};

\draw[black, thick] (v1) -- (v2);
\draw[black, thick] (v1) -- (v3);
\draw[black, thick] (v2) -- (v3);
\draw[black, dashed, thick] (v1) -- (v4);
\draw[black, dashed, thick] (v2) -- (v5);
\draw[black, dashed, thick] (v3) -- (v6);
\draw[black, thick] (v4) -- (v5);
\draw[black, thick] (v4) -- (v6);
\draw[black, thick] (v5) -- (v6);
\end{tikzpicture}
\hspace{0.7cm}
\begin{tikzpicture}[scale=0.29]

\node[inner sep=1.5pt, fill=black, circle] at (0, 3)(v1){}; 
\node[inner sep=1.5pt, fill=black, circle] at (-3, 0)(v2){}; 
\node[inner sep=1.5pt, fill=black, circle] at (3, 0)(v3){}; 
\node[inner sep=1.5pt, fill=black, circle] at (-3, -6)(v4){}; 
\node[inner sep=1.5pt, fill=black, circle] at (0, -3)(v5){}; 
\node[inner sep=1.5pt, fill=black, circle] at (3, -6)(v6){};

\draw[black, thick] (v1) -- (v2);
\draw[black, thick] (v1) -- (v3);
\draw[black, dashed, thick] (v1) -- (v5);
\draw[black, dashed, thick] (v2) -- (v4);
\draw[black, dashed, thick] (v3) -- (v6);
\draw[black, thick] (v4) -- (v5);
\draw[black, thick] (v4) -- (v6);
\draw[black, thick] (v5) -- (v6);

\end{tikzpicture}
\hspace{0.7cm}
\begin{tikzpicture}[scale=0.29]

\node[inner sep=1.5pt, fill=black, circle] at (-3, 2)(v2){}; 
\node[inner sep=1.5pt, fill=black, circle] at (3, 2)(v3){}; 
\node[inner sep=1.5pt, fill=black, circle] at (-3, -4)(v4){}; 
\node[inner sep=1.5pt, fill=black, circle] at (0, -1)(v5){}; 
\node[inner sep=1.5pt, fill=black, circle] at (3, -4)(v6){};

\draw[black, thick] (v5) -- (v2);
\draw[black, thick] (v5) -- (v3);
\draw[black, dashed, thick] (v2) -- (v4);
\draw[black, dashed, thick] (v3) -- (v6);
\draw[black, dashed, thick] (v2) -- (v3);
\draw[black, thick] (v4) -- (v5);
\draw[black, dashed, thick] (v4) -- (v6);
\draw[black, thick] (v5) -- (v6);

\end{tikzpicture}
\end{center}
%ace{-0.75cm}
\caption{Theta, prism, pyramid and an even wheel. Dashed lines represent paths of length at least one.}
\label{fig:forbidden_isgs}
\end{figure}
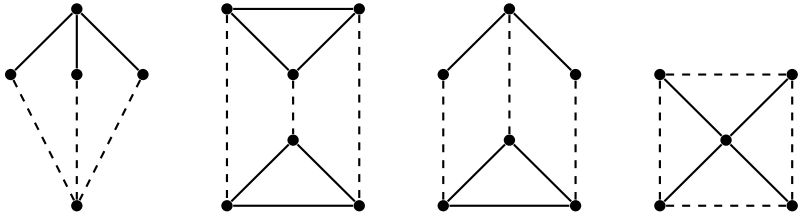
A \emph{theta} is a graph consisting of three internally vertex-disjoint paths $P_1=a \dd \hdots \dd b$, $P_2 = a \dd \dots \dd b$, and
$P_3 = a \dd \dots \dd b$, each of length at least 2, such that $P_1^*, P_2^*, P_3^*$ are pairwise anticomplete. In this case we call $a$ and $b$ the
{\em ends} of the theta.

A \emph{prism} is a graph consisting of three vertex-disjoint paths
$P_1 = a_1 \dd \dots \dd b_1$, $P_2 = a_2 \dd \dots \dd b_2$, and $P_3 = a_3 \dd \dots \dd b_3$, each of
length at least 1, such that $a_1a_2a_3$ and $b_1b_2b_3$ are triangles, and no edges exist between the paths except those of the two triangles.

Given an integer $k$, a  {\em generalized $k$-pyramid} is a graph whose vertex set is the disjoint union of $k+2$ paths $P,Q, R_1, \dots, R_k$, such that the following hold (here $P$ is the bottom path in the graphs in Figure~\ref{fig:kpyramids} and
$Q$ is the top path): 
\begin{itemize}
  \item $P \cup Q$ is a hole.
  \item For every $i \in \{1, \dots, k\}$, the path $R_i$ has ends $a_i$ and $b_i$. 
  \item For every $i \in \{1, \dots, k\}$, $a_i$ has exactly two neighbors  $x_i,y_i$ in $P$. Moreover, 
    $x_i, y_i \in P^*$ and $x_i$ is adjacent to $y_i$.
  \item For every $i \in \{1, \dots, k\}$, $b_i$ has exactly one neighbor $z_i$ in $Q$.
  \item For every $i \in \{1, \dots, k\}$, $R_i \setminus \{a_i,b_i\}$ is anticomplete to $P \cup Q$.
  \item For every $1 \leq i <j \leq k$, $R_i$ is anticomplete to $R_j$.
  \item $P$ traverses $x_1,y_1,x_2,y_2, \dots, x_k,y_k$ in this order.
  \item $Q$ traverses $z_1, z_2, \dots, z_k$ in this order (but note that some of these vertices may coincide).
 \item For every $i \in \{1, \dots, k-1\}$, we have that $y_i \neq x_{i+1}$ (and so $x_1,y_1,x_2,y_2, \dots, x_k,y_k$ are all distinct). 
\end{itemize}
A generalized $k$-pyramid where $|Q|=1$ is sometimes
referred to as a \emph{$k$-pyramid}, and a $1$-pyramid is usually called
a {\em pyramid}.

A \emph{wheel} $(H, x)$ consists of a hole $H$ and a vertex $x$ such that $x$ has at least three neighbors in $H$. A wheel $(H,x)$ is {\em even} if $x$ has an even number of neighbors on $H$.

Our main result is a slight strengthening of Theorem \ref{logconj_pyramid}. Let $\mathcal{C}$ be the
class of ($C_4$, theta, prism, even wheel)-free graphs (these are sometimes called ``$C_4$-free odd-signable graphs''). For every integer $t \geq 1$, let
$\mathcal{C}_t$ be the class of all graphs in $\mathcal{C}$ with no clique of size $t$, and let $\mathcal{C}_{tt}$ be the class of all graphs in
$\mathcal{C}_t$ that are also generalized $t$-pyramid-free.
It is easy to see that every even-hole-free graph is in $\mathcal{C}$.
We prove:

\begin{theorem}
\label{main}
For every integer $t$, there exists a constant $c_t$ such that every
 $G \in \mathcal{C}_{tt}$ satisfies
$\tw(G)\leq c_t \log |V(G)|$. 
\end{theorem}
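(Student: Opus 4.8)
The plan is to deduce Theorem~\ref{main} from a single statement about balanced separators. Recall the well-known fact that there is an absolute constant $\kappa$ with the following property: if $G$ is a graph and $k\in\mathbb N$ is such that for every weight function $w\colon V(G)\to\mathbb R_{\ge 0}$ there is a set $X\subseteq V(G)$ with $|X|\le k$ and $w(C)\le\tfrac12 w(V(G))$ for every component $C$ of $G\setminus X$, then $\tw(G)\le\kappa k$ (this follows from the correspondence between treewidth, brambles and balanced separators). Hence it suffices to prove the following ``Main Lemma'': there is $\xi=\xi(t)$ such that, for every $G\in\mathcal C_{tt}$ and every weight function $w$ on $V(G)$, some $w$-balanced separator of $G$ has at most $\xi\log|V(G)|$ vertices. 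The remainder of the proof is devoted to this.

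The first reduction is to dispose of clique cutsets. Since every clique is contained in a bag of every tree decomposition, one may glue tree decompositions along a clique cutset without increasing the width; so decomposing $G$ repeatedly along clique cutsets reduces Theorem~\ref{main} itself to the case in which $G$ has no clique cutset (the ``atoms'' of this decomposition are induced subgraphs of $G$, hence again lie in $\mathcal C_{tt}$, and have at most $|V(G)|$ vertices). From now on we may therefore assume $G$ has no clique cutset; in particular $G$ is $2$-connected.

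The core of the argument is the structural analysis of clique-cutset-free graphs in $\mathcal C_{tt}$. Given $w$, the plan is to produce a ``central object'' $\Omega$ --- concretely a hole, or a bounded (in terms of $t$) union of holes and paths --- such that $V(\Omega)$ is a $w$-balanced separator (possibly after iterating the construction a bounded number of times on a remaining heavy component, and peeling off any clique cutsets that appear along the way), together with a bound $|V(\Omega)|\le\xi(t)\log|V(G)|$. Two points need to be established. (a) \emph{Existence of a weight-splitting central object.} Here one uses $2$-connectedness together with $C_4$-freeness and odd-signability: if no bounded union of holes and paths split the weight, then by a Menger-type argument one could link the heavy regions of $G$ by many internally disjoint paths, and the standard analysis of attachments of holes in (theta, prism, even-wheel)-free graphs --- the ``three-path-configuration'' machinery developed earlier in the series --- would force a theta, prism or even wheel. (b) \emph{Bounding the size of $\Omega$.} This is where the hypotheses ``$K_t$-free'' and ``no generalized $t$-pyramid'' are used decisively: a central object that were too long would, together with the attachments forced by the rest of $G$, either produce a clique of size $t$ or exhibit $t$ pairwise-anticomplete paths $R_1,\dots,R_t$ attached to a hole in precisely the configuration defining a generalized $t$-pyramid. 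The appearance of $\log|V(G)|$ rather than a constant is intrinsic: $\mathcal C_{tt}$ contains the layered wheels of \cite{ST}, which are assembled from $\Theta(\log n)$ nested copies of exactly such central structures, and one must take care that the (bounded) iteration in the construction accumulates only $O_t(\log|V(G)|)$ vertices in total, not $O_t\bigl((\log|V(G)|)^2\bigr)$.

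The main obstacle is point (b): bounding the size of a weight-splitting central object. Producing \emph{some} weight-splitter (point (a)) is a fairly routine application of the Truemper-configuration toolkit, but the size bound requires a quantitative, ``layered'' strengthening of the principle that generalized pyramids are the basic obstruction to small separators in this class --- in effect, that a sufficiently long central structure in $\mathcal C_{tt}$ must contain a generalized $t$-pyramid. I expect this to be the longest and most delicate part of the paper, proceeding by an induction on the number of ``layers'' of the structure in which it is precisely the generalized $t$-pyramid (rather than an ordinary pyramid) that makes the inductive step go through; this is presumably also why the notion had to be introduced in the first place.
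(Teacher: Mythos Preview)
Your proposal is a plan, not a proof, and the plan as stated has a genuine gap at its load-bearing point. The central claim is that one can always find a $w$-balanced separator $\Omega$ which is ``a hole, or a bounded (in terms of $t$) union of holes and paths'' of size $O_t(\log n)$. Neither the existence of such an $\Omega$ (your point (a)) nor the size bound (your point (b)) is actually established; you write ``I expect this to be the longest and most delicate part'' about (b), which is an honest acknowledgement that the argument is missing. More concretely, your proposed mechanism for (b) --- that a sufficiently long central structure in $\mathcal{C}_{tt}$ must contain a generalized $t$-pyramid --- is false as stated: holes in $\mathcal{C}_{tt}$ can be arbitrarily long (any odd cycle lies in $\mathcal{C}_{tt}$), so length alone forces nothing. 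What forces a generalized $t$-pyramid is not the length of a single structure but the presence of many \emph{non-hub} vertices in a minimal separator together with two full components for that separator; that is a statement about the attachments, not about the size of the central object, and it does not translate into a direct size bound on a balanced separator in the way you describe.

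For comparison, the paper's proof is organised quite differently. The role of $\log n$ is not the length of any geometric object but the \emph{depth of an induction}: one partitions the hub set $\Hub(G)$ into $O_t(\log n)$ stable layers $S_1,\dots,S_k$ (via degeneracy), and proves $\tw(G)\le c_t+\Delta\Psi(\log n + k)$ by induction on $k$. The inductive step works inside a centred bag of an $m$-atomic (hence $m$-lean, tight) tree decomposition, builds a ``safe torso'' $\beta$ and a ``central bag'' $\beta^A(S_1)$ in which no vertex of $S_1$ is a hub, applies induction to $\beta^A(S_1)$, and then reassembles a tree decomposition of $G$ while paying only a bounded additive cost per layer. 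The generalized $t$-pyramid exclusion enters in exactly one place: to bound the number of \emph{non-hubs} in a stable subset of any potential maximal clique (equivalently, of any minimal separator). This in turn bounds how many $C(v)$'s and how many adhesions must be added to a given bag at each step. Your sketch does not touch hubs, lean tree decompositions, or PMCs at all, and without those ingredients there is no visible route from ``no generalized $t$-pyramid'' to a logarithmic separator bound.
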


The general idea of the proof of \ref{main} is similar to the proof of
the main result of \cite{TWIII}. However, there were several technical steps in
\cite{TWIII}, dealing with so called ``balanced vertices'', that we managed to circumvent here, using a much more elegant approach of $k$-lean tree decompositions. In addition to making the proof less technical, using this tool also allowed us to only exclude  generalized $t$-pyramids instead of  pyramids (note that every generalized $t$-pyramid contains a pyramid).

\subsection {Contributions beyond the main result}

In this paper  we introduce  new techniques that are nicer than the
techniques used earlier in the series. These include  the use of $k$-lean tree
decompositions, and the construction of Theorem~\ref{connectors} that is a
way to emulate the well-known concept of ``torsos''  in the induced subgraph
world.

Several results of the present paper are of independent interest, beyond their applications to Theorem~\ref{main}. First and foremost, in
Section~\ref{sec:connectifiers} we prove several results describing the structure of minimal connected subgraphs containing the neighbors of a (large subset
of) a given set of  vertices. These structural results are used repeatedly in
forthcoming papers in the series. Secondly, the full proof of \cite{TWXV}
uses results of  Section~\ref{cutsets} of the current paper, as well as methods developed in Sections~\ref{sec:centralbag} and \ref{sec:proof}. Finally,
this paper pushes the results of Section~\ref{sec:non-hubs} to their limit.
The bound
obtained here does not seem to hold in more general settings, and
methods of completely different nature were needed to make further progress.

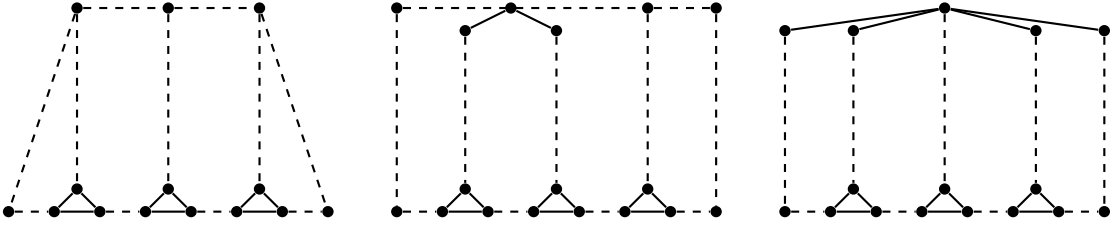
\begin{figure}[t!]
\label{fig:kpyramid}
\begin{center}
\begin{tikzpicture}[scale=0.30]

\node[inner sep=1.5pt, fill=black, circle] at (-1, 6)(v2){}; 
\node[inner sep=1.5pt, fill=black, circle] at (3,6)(v3){}; 
\node[inner sep=1.5pt, fill=black, circle] at (7,6)(v4){};
%\node[inner sep=1.5pt, fill=black, circle] at (10,6)(v16){};
\node[inner sep=1.5pt, fill=black, circle] at (-4,-3)(v5){};
\node[inner sep=1.5pt, fill=black, circle] at (-2,-3)(v6){};
\node[inner sep=1.5pt, fill=black, circle] at (-1,-2)(v7){};
\node[inner sep=1.5pt, fill=black, circle] at (0,-3) (v8){};
\node[inner sep=1.5pt, fill=black, circle] at (2,-3)(v9){};
\node[inner sep=1.5pt, fill=black, circle] at (3,-2)(v10){};
\node[inner sep=1.5pt, fill=black, circle] at (4,-3)(v11){};
\node[inner sep=1.5pt, fill=black, circle] at (6,-3)(v12){};
\node[inner sep=1.5pt, fill=black, circle] at (7,-2)(v13){};
\node[inner sep=1.5pt, fill=black, circle] at (8,-3)(v14){};
\node[inner sep=1.5pt, fill=black, circle] at (10,-3)(v15){};

\draw[black, dashed, thick] (v2) -- (v3);
\draw[black, %dashed,
  thick] (v2) -- (v5);
\draw[black,  dashed, thick] (v3) -- (v4);
\draw[black,  %dashed,
  thick] (v4) -- (v15);
\draw[black, dashed, thick] (v2) -- (v7);
\draw[black, dashed, thick] (v3) -- (v10);
\draw[black, dashed, thick] (v4) -- (v13);

\draw[black, dashed, thick] (v5) -- (v6);
\draw[black,  thick] (v6) -- (v8);
\draw[black, dashed,  thick] (v8) -- (v9);
\draw[black,  thick] (v9) -- (v11);
\draw[black, dashed, thick] (v11) -- (v12);
\draw[black, thick] (v12) -- (v14);
\draw[black, dashed, thick] (v14) -- (v15);
\draw[black, thick] (v6) -- (v7);
\draw[black,  thick] (v7) -- (v8);
\draw[black, thick] (v9) -- (v10);
\draw[black,  thick] (v10) -- (v11);
\draw[black, thick] (v12) -- (v13);
\draw[black,  thick] (v13) -- (v14);

\end{tikzpicture}
\hspace{0.5cm}
\begin{tikzpicture}[scale=0.30]

\node[inner sep=1.5pt, fill=black, circle] at (-4, 6)(v1){}; 
\node[inner sep=1.5pt, fill=black, circle] at (-1, 5)(v2){};
\node[inner sep=1.5pt, fill=black, circle] at (1, 6)(v17){};
\node[inner sep=1.5pt, fill=black, circle] at (3,5)(v3){}; 
\node[inner sep=1.5pt, fill=black, circle] at (7,6)(v4){};
\node[inner sep=1.5pt, fill=black, circle] at (10,6)(v16){};
\node[inner sep=1.5pt, fill=black, circle] at (-4,-3)(v5){};
\node[inner sep=1.5pt, fill=black, circle] at (-2,-3)(v6){};
\node[inner sep=1.5pt, fill=black, circle] at (-1,-2)(v7){};
\node[inner sep=1.5pt, fill=black, circle] at (0,-3) (v8){};
\node[inner sep=1.5pt, fill=black, circle] at (2,-3)(v9){};
\node[inner sep=1.5pt, fill=black, circle] at (3,-2)(v10){};
\node[inner sep=1.5pt, fill=black, circle] at (4,-3)(v11){};
\node[inner sep=1.5pt, fill=black, circle] at (6,-3)(v12){};
\node[inner sep=1.5pt, fill=black, circle] at (7,-2)(v13){};
\node[inner sep=1.5pt, fill=black, circle] at (8,-3)(v14){};
\node[inner sep=1.5pt, fill=black, circle] at (10,-3)(v15){};

\draw[black,  dashed, thick] (v1) -- (v4);
\draw[black,   thick] (v1) -- (v17);
\draw[black,   thick] (v4) -- (v16);
\draw[black,  thick] (v17) -- (v2);
\draw[black, thick] (v17) -- (v3);
\draw[black,  dashed, thick] (v1) -- (v16);
\draw[black, %dashed,
  thick] (v1) -- (v5);
\draw[black, dashed, thick] (v2) -- (v7);
\draw[black, dashed, thick] (v3) -- (v10);
\draw[black, dashed, thick] (v4) -- (v13);
\draw[black, %dashed,
  thick] (v16) -- (v15);
\draw[black,   thick] (v1) -- (v17);
\draw[black, dashed, thick] (v5) -- (v6);
\draw[black,  thick] (v6) -- (v8);
\draw[black, dashed, thick] (v8) -- (v9);
\draw[black,  thick] (v9) -- (v11);
\draw[black, dashed, thick] (v11) -- (v12);
\draw[black, thick] (v12) -- (v14);
\draw[black, dashed, thick] (v14) -- (v15);
\draw[black, thick] (v6) -- (v7);
\draw[black,  thick] (v7) -- (v8);
\draw[black, thick] (v9) -- (v10);
\draw[black,  thick] (v10) -- (v11);
\draw[black, thick] (v12) -- (v13);
\draw[black,  thick] (v13) -- (v14);
\end{tikzpicture}
\hspace {0.5 cm}
\begin{tikzpicture}[scale=0.30]

\node[inner sep=1.5pt, fill=black, circle] at (-4, 5)(v1){}; 
\node[inner sep=1.5pt, fill=black, circle] at (-1, 5)(v2){};
\node[inner sep=1.5pt, fill=black, circle] at (3, 6)(v17){};
%\node[inner sep=1.5pt, fill=black, circle] at (3,5)(v3){}; 
\node[inner sep=1.5pt, fill=black, circle] at (7,5)(v4){};
\node[inner sep=1.5pt, fill=black, circle] at (10,5)(v16){};
\node[inner sep=1.5pt, fill=black, circle] at (-4,-3)(v5){};
\node[inner sep=1.5pt, fill=black, circle] at (-2,-3)(v6){};
\node[inner sep=1.5pt, fill=black, circle] at (-1,-2)(v7){};
\node[inner sep=1.5pt, fill=black, circle] at (0,-3) (v8){};
\node[inner sep=1.5pt, fill=black, circle] at (2,-3)(v9){};
\node[inner sep=1.5pt, fill=black, circle] at (3,-2)(v10){};
\node[inner sep=1.5pt, fill=black, circle] at (4,-3)(v11){};
\node[inner sep=1.5pt, fill=black, circle] at (6,-3)(v12){};
\node[inner sep=1.5pt, fill=black, circle] at (7,-2)(v13){};
\node[inner sep=1.5pt, fill=black, circle] at (8,-3)(v14){};
\node[inner sep=1.5pt, fill=black, circle] at (10,-3)(v15){};

%\draw[black,  dashed, thick] (v1) -- (v16);
\draw[black,  thick] (v17) -- (v2);
\draw[black, thick] (v17) -- (v1);
\draw[black, thick] (v17) -- (v4);
\draw[black, thick] (v17) -- (v16);
\draw[black, %dashed,
  thick] (v1) -- (v5);
\draw[black, dashed, thick] (v2) -- (v7);
\draw[black, dashed, thick] (v17) -- (v10);
\draw[black, dashed, thick] (v4) -- (v13);
\draw[black, %dashed,
  thick] (v16) -- (v15);

\draw[black, dashed, thick] (v5) -- (v6);
\draw[black,  thick] (v6) -- (v8);
\draw[black, dashed, thick] (v8) -- (v9);
\draw[black,  thick] (v9) -- (v11);
\draw[black, dashed, thick] (v11) -- (v12);
\draw[black, thick] (v12) -- (v14);
\draw[black, dashed, thick] (v14) -- (v15);
\draw[black, thick] (v6) -- (v7);
\draw[black,  thick] (v7) -- (v8);
\draw[black, thick] (v9) -- (v10);
\draw[black,  thick] (v10) -- (v11);
\draw[black, thick] (v12) -- (v13);
\draw[black,  thick] (v13) -- (v14);

\end{tikzpicture}

\end{center}
%\vspace{-0.75cm}
\caption{Examples of generalized $3$-pyramids.  Dashed lines represent paths of length at least one.}
\label{fig:kpyramids}
\end{figure}

\subsection{Proof outline and organization}

Let us now discuss the main ideas of the proof of
Theorem \ref{main}. We will give precise definitions of the concepts used below later in the paper; our goal here is to sketch  a road map of where
we are going. Obtaining a tree decomposition is usually closely related to
producing  a collection of ``non-crossing separations,'' which  roughly
means that the separations
``cooperate'' with each other, and the pieces that are obtained when the graph
is simultaneously decomposed by all the separations in the collection
``line up'' to form a tree structure.

We remark that most of our arguments work for graphs in the larger class  $\mathcal{C}_t$;
we will specify the point when we need to make the assumption that
generalized $t$-pyramids are excluded.
In the case of graphs in $\mathcal{C}_t$ (as well as some other graph classes addressed in other papers of this series), there is a natural family of separations to turn to;  they correspond to special vertices of the graph called
``hubs,'' and are discussed in Section~\ref{cutsets}.
Unfortunately,
these natural separations are very far from being non-crossing, and therefore
we cannot use them in traditional ways to get tree decompositions. Similarly to
\cite{TWIII}, we use degeneracy to partition the set
of all hubs (which yields a partition of all the natural separations)
of an $n$-vertex  graph $G$ in $\mathcal{C}_t$  into collections $S_1, \dots, S_p$, where each $S_i$ is ``non-crossing''
(this property is captured in Lemma \ref{looselylaminar}), $p \leq C(t) \log n$ (where $C(t)$ only depends on $t$ and works for all $G \in \mathcal{C}_t$)
and each vertex of  $S_i$ has at most $d$ (where $d$ depends on  $t$)
neighbors in
$\bigcup_{j=i}^p S_j$. Our main result is that the treewidth of $G$  is
bounded by a  linear  function of $p+\log n$.

First we will  show that graphs in $\mathcal{C}_t$ that do not have hubs have treewidth that is bounded as a function of $t$; thus we may assume that $p>0$.
In fact, we will prove that
there exists a constant $k$, depending on $t$, such that if two 
 non-adjacent
vertices $u,v$ of a graph in $\mathcal{C}_t$ are joined by $k$ vertex-disjoint paths $P_1, \dots, P_k$, then for at least one $i \in \{1, \dots, k\}$, the neighbor of
$v$ in $P_i$ is a hub. 

We now proceed as follows. Let $m=2d+k$. We first consider a
so-called $m$-lean tree decomposition $(T, \chi)$ of $G$ (discussed in Section \ref{linkedandlean}). By standard arguments on tree decompositions,
we deduce that some bag $\chi(t_0)$ of $(T, \chi)$ is in some sense central
to the tree decomposition, and we focus on it. We can then show that all but at
most one vertex of $S_1$ has bounded degree in the torso of $\chi(t_0)$.
We would like to use this fact in order to construct a tree decomposition of the torso of $\chi(t_0)$ and then use it to obtain a tree decomposition of $G$.
Unfortunately, the torso of $\chi(t_0)$ is not a graph in $\mathcal{C}_t$.
Instead, we find an induced subgraph of $G$, which we call $\beta$, that
consists of $\chi(t_0)$ together with a collection of disjoint vertex sets
$Conn(t)$ for $t \in N_T(t_0)$, where each $Conn(t)$ ``remembers'' 
 the component of $G \setminus \chi(t_0)$ that
 meets $\chi(t)$.  Moreover, no vertex of
 $\beta \setminus \chi(t_0)$ is a hub of $\beta$, 
and all but one vertex of $S_1$ have bounded degree in $\beta$.

Next, we  decompose
 $\beta$, simultaneously, by all the separations corresponding to the hubs in $S_1$ whose degree in $\beta$ is bounded, and delete the unique vertex of
 $S_1$ of high degree (if one exists). We denote the resulting graph  by $\beta^A(S_1)$ and call it 
    the ``central bag'' for $S_1$. The parameter $p$
  is smaller for $\beta^A(S_1)$ than it is for $G$, and so we can use induction
  to obtain a bound on the treewidth of $\beta^A(S_1)$.
  We then  start with a special optimal tree decomposition of $\beta^A(S_1)$,
  where each bag is a ``potential maximal clique'' (see Section~\ref{linkedandlean}).
  Also inductively (this time on the number of vertices) we have tree
  decompositions for each component of $\beta \setminus \beta^A(S_1)$.

  Now  we use the special nature of our ``natural separations'' and properties of potential maximal cliques to combine the
  tree decompositions above into a tree decomposition of $\beta$, where the size
  of the bag only grows by an additive constant.

  Then we repeat a similar procedure to combine the tree decomposition of
  $\beta$ that we just obtained with tree decompositions of the components of $G \setminus \chi(t_0)$ to obtain a tree decomposition of $G$, where again
  the size of the bag only grows by an additive constant.

  Let us now discuss how we obtain the bound on the growth of a bag.
  In the first step of the growing process, we   
  add to each existing bag $B$  the neighbor sets of the vertices of $S_1 \cap B$.     The number of vertices of $S_1$  in  each bag is bounded
    by Theorem \ref{boundhubs}, because no vertex of $S_1$ is a hub in
    $\beta^A(S_1)$ (this is proved in Theorem \ref{A_centralbag}).

    In the second growing step, we first turn the tree decomposition of
    $\beta$ that we just constructed into a tree decomposition of the same
    width with the additional property that every bag is a potential maximal clique of $G$.
 Next,   for each  bag $B$ of this tree decomposition, 
    and for every $t \in N_T(t_0)$ such that $B \cap Conn(t) \neq \emptyset$, we add to $B$
    the adhesion $\chi(t_0) \cap \chi(t)$. One of the properties
    of $m$-lean tree decompositions is that the size of each adhesion is bounded.
    The number of adhesions added to a given bag is again bounded by
    Theorem \ref{boundhubs} since distinct sets $Conn(t)$ are pairwise disjoint and anticomplete to each other, and no vertex of $Conn(t)$ is a hub.

    Theorem \ref{boundhubs} is the only result in the paper that uses the
    stronger assumption that  generalized $t$-pyramids are excluded.

    The paper is organized as follows. In Section~\ref{linkedandlean},
    we discuss several types of tree decompositions that we use in this paper.
        In Section~\ref{cutsets}, we summarize 
  results guaranteeing the existence of useful separations.
  In Section~\ref{sec:centralbag}, we discuss the construction of
  the graphs $\beta$ and $\beta^A(S_1)$, and how to use their tree
  decompositions to obtain a tree decomposition of $G$.
  In Section~\ref{sec:connectifiers}, we analyze the structure of minimal separators in graphs of $\mathcal{C}_t$.
  In Section~\ref{boundhubs}, we use the results of Section~\ref{sec:connectifiers}
  to obtain a bound on the size of a stable set of non-hubs in a potential maximal clique of a graph in $\mathcal{C}_{tt}$. 
  Section~\ref{sec:proof} puts together the results of all the previous sections
  to prove Theorem \ref{main}.
  Finally, Section~\ref{algsec}
  discusses algorithmic consequences of Theorem \ref{main}.
 
\section{Special tree decompositions and connectivity} \label{linkedandlean}

In this section we discuss several known results related to connectivity, and
a describe a few special kinds of tree decompositions.
Let $G$ be a graph. Let $X, Y, Z$ be subsets of $V(G)$. By a \emph{path from $Y$ to $Z$} we mean a path from some $y \in Y$ to some $z \in Z$. We say that
{\em $X$ separates $Y$ from $Z$} (in $G$) if every path $P$ with an end in $Y$ and an end in $Z$ satisfies $P \cap X \neq \emptyset$. 
%if there exist distinct components $C_Y,C_Z$ of $G\setminus X$ such that $Y \setminus X \subseteq C_Y$ and $Z \setminus X \subseteq C_Z$.
In this case we also say that {\em $Y$ is separated from $Z$ by $X$}.
We start by recalling a classical
result of Menger (please note that even though the next two theorems are usually stated in terms of paths that are not induced, the formulation below is equivalent by taking paths with minimal vertex sets subject to preserving the ends):
\begin{theorem}[Menger \cite{Menger}]\label{Menger}
  Let $k\geq 1$ be an integer, let $G$ be a graph and let
  $X, Y  \subseteq V(G)$ with $|X|=|Y|=k$. Then either there exists 
  $M\subseteq V(G)$  with $|M|<k$ such that $M$ separates $X$ from $Y$, or
   there are $k$ pairwise vertex-disjoint paths in $G$ from $X$ to $Y$.
\end{theorem}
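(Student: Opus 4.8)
\medskip
\noindent\textbf{Proof proposal.}
I would deduce the theorem from the ``min-max'' form of Menger's theorem, which I would prove in the shape: \emph{for every graph $G$ and all $A,B\subseteq V(G)$, if $k$ is the minimum size of a set separating $A$ from $B$ in $G$, then $G$ contains $k$ pairwise vertex-disjoint paths from $A$ to $B$.} The stated theorem then follows: since $X$ itself separates $X$ from $Y$, the minimum separator size is at most $|X|=k$, so it is either less than $k$ (the first alternative) or equal to $k$, in which case the min-max statement supplies the $k$ pairwise vertex-disjoint paths. I will use freely the remark preceding the statement: any family of walks joining prescribed pairs of vertices yields, after shrinking each walk to a connected subgraph on a minimal vertex set joining the same pair, a family of induced paths with the same disjointness; so it suffices to produce pairwise vertex-disjoint connected subgraphs of $G$, one joining $A$ to $B$ apiece. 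The proof is by induction on $|E(G)|$; when $E(G)=\emptyset$ the only $A$-$B$ paths are the vertices of $A\cap B$, every separator contains $A\cap B$, so $k=|A\cap B|$ and we take those $k$ trivial paths.

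For the inductive step ($E(G)\neq\emptyset$), I would fix any edge $e=uv$ and pass to the contraction $G/e$, writing $w_e$ for the new vertex and $A',B'$ for the images of $A,B$. If the minimum $A'$-$B'$ separator in $G/e$ has size at least $k$, then by induction $G/e$ has $k$ pairwise vertex-disjoint $A'$-$B'$ paths, which lift to $G$ (re-expanding $w_e$ into the edge $uv$ in the at most one path that uses $w_e$) as $k$ pairwise vertex-disjoint connected subgraphs each joining $A$ to $B$; done. Otherwise $G/e$ has an $A'$-$B'$ separator $S'$ with $|S'|<k$, and then $w_e\in S'$ (else $S'$ would separate $A$ from $B$ in $G$), so $S:=(S'\setminus\{w_e\})\cup\{u,v\}$ separates $A$ from $B$ in $G$ with $|S|\le|S'|+1\le k$; hence $|S|=k$, i.e.\ $S$ is a minimum $A$-$B$ separator, and it contains both ends of the edge $e$.

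Now let $G_A$ be the union of all paths from $A$ to $S$ that meet $S$ only in their $S$-endpoint, and define $G_B$ symmetrically from $B$. Minimality of $S$ gives $S\subseteq V(G_A)\cap V(G_B)$, and one checks $V(G_A)\cap V(G_B)=S$. The key point is that, since $u,v\in S$, no such path can use the edge $e=uv$ (that would put a second vertex of $S$ on the path), so $e\notin E(G_A)\cup E(G_B)$ and therefore $|E(G_A)|,|E(G_B)|<|E(G)|$. Moreover the minimum $A$-$S$ separator in $G_A$ has size exactly $k$: it is at most $|S|=k$, and anything smaller would, via the initial segment up to its first $S$-vertex of an arbitrary $A$-$B$ path of $G$ (a segment lying in $G_A$), yield an $A$-$B$ separator of $G$ of size below $k$; symmetrically for $G_B$. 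By induction $G_A$ has $k$ pairwise vertex-disjoint $A$-$S$ paths and $G_B$ has $k$ pairwise vertex-disjoint $S$-$B$ paths; since $|S|=k$, each family is forced to use each vertex of $S$ exactly once, as a path-endpoint. Matching the families along $S$ and concatenating produces $k$ pairwise vertex-disjoint walks from $A$ to $B$ — disjoint off $S$ because $V(G_A)\cap V(G_B)=S$ — which completes the induction.

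\textbf{Main obstacle.} The single non-routine idea is to arrange a minimum separator $S$ that \emph{contains both endpoints of some edge $e$}: this one fact makes both $G_A$ and $G_B$ omit $e$, hence have strictly fewer edges, which is precisely what the induction requires and what lets the split argument proceed without any separate ``degenerate'' cases. Everything else — that contraction drops the minimum separator size by at most one, that $V(G_A)\cap V(G_B)=S$, that the minimum $A$-$S$ separator in $G_A$ equals $k$, and that the two path families fit together along $S$ — is routine verification, and the passage from walks to induced paths is handled once by the remark quoted before the theorem.
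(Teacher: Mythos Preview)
Your proof is correct; it is the standard edge-contraction argument for Menger's theorem (essentially the proof in Diestel's \emph{Graph Theory}, which the paper cites). However, the paper itself gives no proof of this statement: Theorem~\ref{Menger} is quoted as a classical result attributed to Menger, and the paper immediately moves on to deduce Theorem~\ref{Menger_vertex} from it. So there is nothing to compare against---you have supplied a full, valid proof where the paper simply invokes the literature.
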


Theorem \ref{Menger} immediately implies:

\begin{theorem}[Menger \cite{Menger}]\label{Menger_vertex}
   Let $k\geq 1$ be an integer, let $G$ be a graph and let $u,v \in G$ be distinct and non-adjacent. Then either there exists a set $M\subseteq G\setminus \{u,v\}$ with $|M|<k$ such that $M$ separates $u$ and $v$ in $G$, or there are $k$ pairwise internally vertex-disjoint paths in $G$ from $u$ to $v$.
\end{theorem}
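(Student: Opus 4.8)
The plan is to deduce Theorem~\ref{Menger_vertex} from Theorem~\ref{Menger} by converting the question about the two vertices $u,v$ into one about two vertex sets of equal size $k$. Write $G'=G\setminus\{u,v\}$. Since $u$ and $v$ are non-adjacent, every $u$--$v$ path of $G$ has length at least $2$, and both $N(u)$ and $N(v)$ are subsets of $V(G')$. The first step is to set up the (routine) dictionary between $G$ and $G'$: a set $M\subseteq V(G')$ separates $u$ from $v$ in $G$ if and only if it separates $N(u)$ from $N(v)$ in $G'$ — because deleting the endpoints of a $u$--$v$ path $u\dd p_1\dd\cdots\dd p_m\dd v$ of $G$ leaves an $N(u)$--$N(v)$ path of $G'$ (with $p_1\in N(u)$ and $p_m\in N(v)$, possibly $p_1=p_m$), while prepending $u$ and appending $v$ to an $N(u)$--$N(v)$ path of $G'$ and then passing to an induced subpath with the same ends yields a $u$--$v$ path of $G$; and, by the same operation, $G'$ has $k$ pairwise vertex-disjoint $N(u)$--$N(v)$ paths if and only if $G$ has $k$ pairwise internally vertex-disjoint $u$--$v$ paths.

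To feed this into Theorem~\ref{Menger}, whose hypothesis requires two sets of size exactly $k$, I would next enlarge $G'$. Let $G''$ be obtained from $G'$ by adding $k$ pairwise non-adjacent vertices $\hat u_1,\dots,\hat u_k$, each complete to $N(u)$, and $k$ pairwise non-adjacent vertices $\hat v_1,\dots,\hat v_k$, each complete to $N(v)$, with no other new edges (so $X:=\{\hat u_1,\dots,\hat u_k\}$ is anticomplete to $Y:=\{\hat v_1,\dots,\hat v_k\}$). Apply Theorem~\ref{Menger} to $X,Y$ in $G''$. If it returns $M\subseteq V(G'')$ with $|M|<k$ separating $X$ from $Y$, then since $|X|=|Y|=k>|M|$ some $\hat u_i$ and some $\hat v_j$ lie outside $M$, and one checks that $M':=M\cap V(G')$ separates $N(u)$ from $N(v)$ in $G'$ (any path of $G'\setminus M'$ between these two sets would extend through $\hat u_i$ and $\hat v_j$ to an $X$--$Y$ path of $G''\setminus M$); by the dictionary, $M'\subseteq V(G)\setminus\{u,v\}$ separates $u$ from $v$ in $G$ and $|M'|\le|M|<k$, which is the first alternative of Theorem~\ref{Menger_vertex}. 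If instead it returns $k$ pairwise vertex-disjoint $X$--$Y$ paths, then these $k$ paths have $2k$ distinct endpoints while $X\cup Y$ has exactly $2k$ vertices, so no vertex of $X\cup Y$ is an interior vertex of any of these paths; hence deleting the two endpoints of each path leaves $k$ pairwise vertex-disjoint paths of $G'$ between $N(u)$ and $N(v)$ (the neighbors of $\hat u_i$ and $\hat v_j$ on each path lying in $N(u)$ and $N(v)$), and by the dictionary these yield $k$ pairwise internally vertex-disjoint $u$--$v$ paths of $G$ — the second alternative.

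I do not anticipate a real obstacle here; the paper is right to call this immediate. The only points needing a little care are making the dictionary precise — in particular, the degenerate cases in which $N(u)$ or $N(v)$ is empty or of size less than $k$ are absorbed automatically, since then $N(u)$ (respectively $N(v)$), which has size less than $k$ and lies in $V(G')$, already separates $X$ from $Y$ in $G''$ — and the counting argument in the second outcome showing that the auxiliary vertices $\hat u_i,\hat v_j$ cannot occur in the interior of the disjoint paths.
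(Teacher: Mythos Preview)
Your proposal is correct and fills in the details of what the paper states without proof: the paper merely writes ``Theorem~\ref{Menger} immediately implies'' before stating Theorem~\ref{Menger_vertex}. Your auxiliary-vertex construction is the standard way to match the $|X|=|Y|=k$ hypothesis of Theorem~\ref{Menger}, and both your dictionary between $G$ and $G'$ and the counting argument ensuring the auxiliary vertices are never interior are accurate.
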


For a tree $T$ and vertices $t,t' \in V(T)$,
we denote by $tTt'$ the unique path of $T$ from $t$ to $t'$.
Let $(T, \chi)$ be a tree decomposition of a graph $G$.
For every $x\in V(T)$,  the \textit{torso at $x$}, denoted by $\hat{\chi}(x)$, is the graph obtained from the bag $\chi(x)$ by, for each $y\in N_T(x)$, adding an edge between every two non-adjacent vertices $u,v\in \chi(x) \cap \chi(y)$.
For every $uv \in E(T)$, the {\em adhesion at $uv$}, denoted by $adh(uv)$, is the set $\chi(u) \cap \chi (v)$. We define $adh(T,\chi)=0$ if $E(T)=\emptyset$, and  $adh(T, \chi)=\max_{uv \in E(T)} |adh(uv)|$ in all other cases.

In the proof of Theorem \ref{main}, we will use several special kinds of tree decompositions that we explain now.

\subsection{Potential Maximal Cliques}

For a graph $G$ and a set $F \subseteq \binom{V(G)}{2} \setminus E(G)$, we denote by $G+F$ the graph obtained from $G$ by making the pairs in $F$ adjacent, that is, $E(G+F) = E(G) \cup F$. 
A set $F \subseteq \binom{V(G)}{2} \setminus E(G)$ 
is a \emph{chordal fill-in} of $G$ if $G+F$ is chordal; in this case, $G+F$ is a {\em chordal completion} of $G$. A chordal fill-in (and the corresponding chordal completion) is \emph{minimal} if it is inclusion-wise minimal. 

Let $X \subseteq V(G)$. The set $X$ is a \emph{minimal separator} if there exist $u, v \in V(G)$ such that $u$ and $v$ are in different connected components of $G \setminus X$, and $u$ and $v$ are in the same connected component of $G \setminus Y$ for every $Y \subsetneq X$.
A component $D$ of $G \setminus X$  is a \emph{full component} for $X$ if $N(D) = X$. It is well-known that a set $X \subseteq V(G)$ is a minimal separator if and only if there are at least two distinct full components for $X$.

A \emph{potential maximal clique} (PMC) of a graph $G$ is a set $\Omega \subseteq V(G)$ such that $\Omega$ is a maximal clique of some minimal chordal completion $G+F$ of $G$. The following result characterizes PMCs: 

\begin{theorem}[Bouchitt\'e and Todinca \cite{BouchitteT01}]
\label{thm:PMC_characterization}
A set $\Omega \subseteq V(G)$ is a PMC of $G$ if and only if: 
\begin{enumerate}
    \item \label{PMC1} for all distinct $x, y \in \Omega$ with $xy \not \in E(G)$, there exists a component $D$ of  $G \setminus \Omega$ such that $x, y \in N(D)$; and 
    \item for every component $D$ of $G \setminus \Omega$ it holds that $N(D) \neq \Omega$ (that is, there are no full components for $\Omega$).
\end{enumerate}
\end{theorem}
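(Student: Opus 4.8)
I would prove the two implications separately; throughout, let $D_1,\dots,D_q$ denote the components of $G\setminus\Omega$ and set $S_i=N_G(D_i)$. For the forward direction, suppose $\Omega$ is a maximal clique of a minimal chordal completion $H=G+F$. To get the cover condition~(\ref{PMC1}), pick distinct $x,y\in\Omega$ with $xy\notin E(G)$, so $xy\in F$. Since $F$ is minimal, $H-xy$ is not chordal; as adding a single edge back to a chordal graph can only destroy chordlessness of cycles through both of its ends, $H-xy$ has a chordless cycle $C$ passing through $x$ and $y$ for which $xy$ is a chord in $H$. Deleting that chord splits $C$ into two internally disjoint $x$--$y$ paths, and a short case analysis (the only bad configuration is a $4$-cycle whose two vertices off the chord lie in $\Omega$ and are therefore adjacent in $H$, giving a second chord) shows at least one of these paths has all internal vertices in $V(G)\setminus\Omega$. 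Those vertices are connected in $H\setminus\Omega$, and here I would invoke the standard fact that for a minimal chordal completion $H$ of $G$ and any clique $K$ of $H$, the components of $H\setminus K$ and of $G\setminus K$ coincide; applying it with $K=\Omega$ reroutes the path inside a genuine component $D$ of $G\setminus\Omega$, so $x,y\in N(D)$. For the no-full-component condition, if $N_G(D_i)=\Omega$ for some $i$, then by the same fact $D_i$ is a full component of $H\setminus\Omega$, so $\Omega$ is a minimal separator of the chordal graph $H$; but a maximal clique of a chordal graph is never a minimal separator of it ($H[D_i\cup\Omega]$ is chordal and not complete, hence has two non-adjacent simplicial vertices, and neither can lie in $\Omega$), a contradiction.

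For the backward direction, assume $\Omega$ satisfies both conditions; I would exhibit a minimal chordal completion with $\Omega$ as a maximal clique. By the no-full-component condition, $S_i\subsetneq\Omega$ for every $i$. First I would verify the two facts that make the theory of minimal separators applicable: each $S_i=N_G(D_i)$ is a minimal separator of $G$ (one full side is $D_i$; the other is the component of $G\setminus S_i$ containing $\Omega\setminus S_i$, which is nonempty and, using the cover condition, has every vertex of $S_i$ in its neighbourhood), and $S_1,\dots,S_q$ are pairwise parallel (for $i\neq j$ one has $S_j\setminus S_i\subseteq\Omega\setminus S_i$, and $\Omega\setminus S_i$ lies in a single component of $G\setminus S_i$ — again by the cover condition, via the components $D_k$ with $k\neq i$ — so $S_j$ meets only that one component). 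Extending $\{S_1,\dots,S_q\}$ to an inclusion-maximal pairwise-parallel family $\mathcal S$ of minimal separators of $G$, the Parra--Scheffler correspondence says the graph $G_{\mathcal S}$ obtained by completing every member of $\mathcal S$ into a clique is a minimal chordal completion of $G$. It then remains to check that $\Omega$ is a maximal clique of $G_{\mathcal S}$: it is a clique because every non-edge of $\Omega$ lies in some $S_i\in\mathcal S$ by the cover condition, and it is maximal because any $v\notin\Omega$ lies in some $D_i$, and every separator of $\mathcal S$ through $v$ is — by parallelism with $S_i$ — contained in the closure of $D_i$, hence meets $\Omega$ only inside $S_i$, so $v$ is not complete to $\Omega$ in $G_{\mathcal S}$.

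The delicate work is all in the backward direction: showing that the $S_i$ are pairwise parallel minimal separators and that $\Omega$ is precisely the potential maximal clique that $\mathcal S$ ``generates'' around the $D_i$, so that enlarging $\mathcal S$ to a maximal family cannot cut through $\Omega$. This is essentially the Parra--Scheffler correspondence between minimal triangulations and maximal parallel families of minimal separators, plus the bookkeeping above; by contrast, the forward direction is mostly standard chordal-graph folklore together with the short simplicial-vertex and chordless-cycle arguments. If one prefers not to invoke Parra--Scheffler as a black box, one can instead build $H$ directly — complete $\Omega$, and for each $i$ complete $S_i$ and recursively triangulate $G[D_i\cup S_i]$ — obtain chordality and the maximality of $\Omega$ easily, and then carry out the real work of checking minimality by hand, which amounts to reproving the relevant half of Parra--Scheffler in this special position.
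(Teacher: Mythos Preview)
The paper does not prove this theorem at all: it is quoted as a known result of Bouchitt\'e and Todinca and used as a black box, with no argument supplied. So there is no ``paper's own proof'' to compare your attempt against. What you have written is essentially the original Bouchitt\'e--Todinca argument (forward direction via the chordless-cycle criterion for minimal fill edges, backward direction via the Parra--Scheffler correspondence between minimal triangulations and maximal parallel families of minimal separators), and it is the right route.

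One genuine gap to flag in your forward direction for condition~(2). From $N_G(D_i)=\Omega$ you correctly get that $D_i$ is a full component for $\Omega$ in $H$, but you then assert that ``$\Omega$ is a minimal separator of $H$'': this needs \emph{two} full components, and you have exhibited only one. Your parenthetical simplicial-vertex sentence does not rescue this: you show that the two non-adjacent simplicial vertices of $H[D_i\cup\Omega]$ lie in $D_i$, but nothing contradictory follows from that alone. The clean fix is to bypass the ``minimal separator'' claim and argue directly that a maximal clique of a chordal graph has no full component: take a clique tree of $H$, let $\Omega$ be the node for this maximal clique, observe that $D_i$ sits in a single branch at $\Omega$, and conclude $N_H(D_i)$ is contained in the adhesion $\Omega\cap\Omega'$ for the neighbouring node $\Omega'$, hence is a proper subset of $\Omega$. (Alternatively, push the simplicial-vertex idea through by inducting on $|D_i|$ to find a vertex of $D_i$ complete to $\Omega$, contradicting maximality; but this takes more than one sentence.) Also, the ``standard fact'' you invoke about components of $H\setminus K$ versus $G\setminus K$ is usually stated for $K$ a \emph{maximal} clique of $H$; since that is exactly your situation, this is harmless, but worth stating precisely.
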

%If $\Omega \subseteq V(G)$ and $D$ is a component of $G \setminus \Omega$ with $x, y \in N(\Omega)$ non-adjacent (as in \eqref{PMC1} above), we say that $D$ \emph{covers} the non-edge $xy$. 

We also need the following result relating PMCs and
minimal separators: 

\begin{theorem}[Bouchitt\'e and Todinca \cite{BouchitteT01}]
\label{prop:PMC_adhesions_are_seps}
Let $\Omega \subseteq V(G)$ be a PMC of $G$. Then, for every component $D$ of  $G \setminus \Omega$, the set $N(D)$ is a minimal separator of $G$. 
\end{theorem}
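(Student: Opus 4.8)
The plan is to reduce to the characterization of minimal separators recalled just before the statement: a set is a minimal separator of $G$ if and only if it has at least two full components. So, with $D$ a fixed component of $G\setminus\Omega$ and $S:=N(D)$, it suffices to produce two distinct full components of $G\setminus S$. Note first that $S=N(D)\subseteq\Omega$ (vertices of other components of $G\setminus\Omega$ are non-adjacent to $D$), so $D\subseteq V(G)\setminus S$; since $D$ is connected and all its neighbors lie in $S$, $D$ is itself a component of $G\setminus S$ with $N(D)=S$, i.e.\ $D$ is a full component for $S$. This is the first one; the work is in finding a second.

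By part~(2) of Theorem~\ref{thm:PMC_characterization} we have $N(D)\neq\Omega$, hence $S\subsetneq\Omega$ and $\Omega\setminus S\neq\emptyset$. Pick $z\in\Omega\setminus S$ and let $C$ be the component of $G\setminus S$ containing $z$; since $z\in\Omega$ while $D\cap\Omega=\emptyset$ we have $z\notin D$, so $C\neq D$. It then remains to show $C$ is a full component for $S$, i.e.\ $N(C)=S$ (the inclusion $N(C)\subseteq S$ is automatic because $C$ is a component of $G\setminus S$).

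I would prove $N(C)=S$ in two steps, each time routing through an auxiliary component of $G\setminus\Omega$ supplied by part~(1) of Theorem~\ref{thm:PMC_characterization} and observing it must avoid $D$ (hence avoid $S$). Step one: $\Omega\setminus S\subseteq C$. If some $z'\in\Omega\setminus S$ lay in a different component of $G\setminus S$, then $zz'\notin E(G)$, so there is a component $D''$ of $G\setminus\Omega$ with $z,z'\in N(D'')$; since $z\notin S=N(D)$ we get $D''\neq D$, so $D''$ is disjoint from $\Omega\cup D$, hence $D''\subseteq V(G)\setminus S$ and, being connected, lies in a single component of $G\setminus S$ — and $z,z'$ (both outside $S$, both with a neighbor in $D''$) lie in that same component, a contradiction. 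Step two: fix $x\in S$; if $x$ has a neighbor in $\Omega\setminus S$ we are done since $\Omega\setminus S\subseteq C$. Otherwise pick any $w\in\Omega\setminus S$; as $xw\notin E(G)$ there is a component $D_{xw}$ of $G\setminus\Omega$ with $x,w\in N(D_{xw})$, and exactly as before $w\notin N(D)$ forces $D_{xw}\neq D$, so $D_{xw}\subseteq V(G)\setminus S$ is connected and, because $w\in C$ has a neighbor in it, $D_{xw}\subseteq C$; hence $x\in N(D_{xw})\subseteq N(C)$. Thus $S\subseteq N(C)$, so $N(C)=S$, giving a second full component and proving $S$ is a minimal separator.

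The proof involves no computation; the only point requiring care — and the main obstacle — is the bookkeeping of which components are distinct: each application of part~(1) of Theorem~\ref{thm:PMC_characterization} yields a component of $G\setminus\Omega$, and one must argue it is not $D$ (using that its neighborhood contains a vertex of $\Omega\setminus S$, which $N(D)=S$ does not), so that it avoids $S$ and can be used to connect vertices of $\Omega\setminus S$, and vertices of $S$, to the component $C$.
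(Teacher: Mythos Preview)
Your proof is correct. Note, however, that the paper does not provide its own proof of this statement: it is quoted as a result of Bouchitt\'e and Todinca \cite{BouchitteT01}, so there is nothing to compare against. Your argument is a clean, self-contained derivation from the characterization in Theorem~\ref{thm:PMC_characterization}, and the bookkeeping you flag (that each auxiliary component $D''$ or $D_{xw}$ produced by part~\eqref{PMC1} must differ from $D$ because its neighborhood meets $\Omega\setminus S$) is exactly the point that needs care. One minor stylistic remark: in Step~2 you write ``$D_{xw}\neq D$, so $D_{xw}\subseteq V(G)\setminus S$'', but the inclusion $D_{xw}\subseteq V(G)\setminus S$ holds for every component of $G\setminus\Omega$ regardless; the role of $D_{xw}\neq D$ is only to ensure that $D_{xw}$ lands in $C$ rather than in $D$, which you establish anyway via the neighbor $w\in C$.
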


We remind the reader of the following well-known property of tree decompositions:

\begin{theorem}[see Diestel \cite{diestel}]
  \label{cliqueinbag}
  Let $G$ be a graph, let $K$ be a clique of $G$, and let $(T, \chi)$ be a tree decomposition of $G$. Then, there is $v \in V(T)$ such that $K \subseteq \chi(v)$.
  \end{theorem}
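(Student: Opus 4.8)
The plan is to reduce the statement to the \emph{Helly property for subtrees of a tree}: any nonempty finite family of pairwise-intersecting subtrees of a tree has a vertex in common. To set up the reduction, for each $v \in K$ let $T_v := \{t \in V(T) : v \in \chi(t)\}$; by property~(i) this is nonempty, and by property~(iii) the induced subgraph $T[T_v]$ is a connected subtree of $T$. For any two distinct $u, v \in K$ we have $uv \in E(G)$ since $K$ is a clique, so property~(ii) gives a vertex $t \in V(T)$ with $u, v \in \chi(t)$, i.e.\ $T_u \cap T_v \neq \emptyset$. Hence $\{T[T_v] : v \in K\}$ is a pairwise-intersecting family of subtrees of $T$, and any vertex $v$ in their common intersection satisfies $K \subseteq \chi(v)$, which is what we want. (If $|K| \leq 1$ the conclusion is immediate from property~(i), so we may assume $|K| \geq 2$, in particular that the family is nonempty.)

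It remains to prove the Helly property. First I would root $T$ at an arbitrary vertex $r$, which assigns to each vertex a \emph{depth} (its distance to $r$) and lets us speak of ancestors. Each subtree $T_v$ has a unique vertex $a_v$ of minimum depth in $T_v$ (its ``apex''): if two vertices of $T_v$ both had minimum depth, the path between them inside $T_v$ — which is the path between them in $T$ — would pass through their least common ancestor, a vertex of $T_v$ of strictly smaller depth, a contradiction. The same observation shows that $a_v$ is an ancestor of every vertex of $T_v$. Now choose $v^\ast \in K$ so that $a_{v^\ast}$ has maximum depth among $\{a_v : v \in K\}$. I claim that $a_{v^\ast} \in T_u$ for every $u \in K$, which proves the Helly property.

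To prove the claim, fix $u \in K$ and pick $w \in T_u \cap T_{v^\ast}$. Both $a_u$ and $a_{v^\ast}$ are ancestors of $w$, so both lie on the path $P$ in $T$ from $r$ to $w$, on which they are linearly ordered by the ancestor relation; since $\operatorname{depth}(a_{v^\ast}) \geq \operatorname{depth}(a_u)$, the vertex $a_{v^\ast}$ lies on the subpath of $P$ from $a_u$ to $w$. That subpath is precisely the path in $T$ between the two vertices $a_u, w$ of the connected subgraph $T_u$, hence it is contained in $T_u$. Therefore $a_{v^\ast} \in T_u$, as claimed, and so $a_{v^\ast} \in \bigcap_{v \in K} T_v$.

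I do not expect a genuine obstacle: the statement is classical (hence the citation to Diestel), and the only real content is the Helly property, handled cleanly by the rooting-and-apex argument above. An alternative would be induction on $|K|$ using the fact that the intersection of two subtrees of a tree is again a subtree, but that route requires a little care to keep re-invoking pairwise intersection, so the direct argument is preferable; the one point worth stating carefully is that the apex of a rooted subtree is an ancestor of all of its vertices.
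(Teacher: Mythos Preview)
Your proof is correct and complete: the reduction to the Helly property for subtrees of a tree is the standard argument, and your rooting-and-apex proof of Helly is clean. Note that the paper does not actually prove this theorem; it is quoted as a well-known fact with a citation to Diestel, so there is no in-paper proof to compare against.
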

Let us say that a tree decomposition $(T, \chi)$ of a graph $G$ is {\em structured} if $\chi(v)$ is a PMC of $G$ for every $v \in V(T)$.
We need the following well-known result which allows us to turn a tree decomposition into a structured tree decomposition; we  include the proof for completeness.

\begin{theorem}
  \label{structured}
  Let $G$ be a graph and let $(T, \chi)$ be a tree decomposition of $G$.
  There exists a structured tree decomposition $(T',\chi')$ of $G$
  such that for every $v' \in T'$ there exists $v \in T$ with
  $\chi'(v') \subseteq \chi(v)$.
  \end{theorem}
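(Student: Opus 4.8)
The plan is to realize the desired structured decomposition as a clique tree of a chordal completion of $G$ that is ``sandwiched'' between $G$ and the fill-in graph naturally attached to $(T,\chi)$.

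\textbf{Step 1: the fill-in graph.} First I would form the graph $G'$ on vertex set $V(G)$ in which two distinct vertices are adjacent precisely when they lie in a common bag of $(T,\chi)$. Then $E(G)\subseteq E(G')$, and $(T,\chi)$ is still a tree decomposition of $G'$, now with the extra feature that every bag $\chi(v)$ is a clique of $G'$. For each $v\in V(G)$ the set $T_v=\{t\in V(T):v\in\chi(t)\}$ induces a nonempty connected subtree of $T$ by axiom (iii), and by construction $uv\in E(G')$ if and only if $T_u\cap T_v\neq\emptyset$. Hence $G'$ is the intersection graph of a family of subtrees of a tree, and therefore $G'$ is chordal (the classical characterization of chordal graphs; see Diestel \cite{diestel}).

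\textbf{Step 2: a minimal chordal completion between $G$ and $G'$.} Put $F'=E(G')\setminus E(G)\subseteq\binom{V(G)}{2}\setminus E(G)$, so that $G+F'=G'$ is chordal, and then choose $F\subseteq F'$ inclusion-minimal subject to $G+F$ being chordal (such $F$ exists since $F'$ itself works). Thus $F$ is a minimal chordal fill-in of $G$ and $G+F$ is a minimal chordal completion of $G$. Being chordal, $G+F$ admits a tree decomposition $(T',\chi')$ whose bags are exactly the maximal cliques of $G+F$ (a clique tree; again classical, see Diestel \cite{diestel}). Each bag $\chi'(v')$ is then a maximal clique of the minimal chordal completion $G+F$, i.e.\ a PMC of $G$ by definition, so $(T',\chi')$ is structured; and since $E(G)\subseteq E(G+F)$, it is a tree decomposition of $G$.

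\textbf{Step 3: the containment property.} By construction $E(G+F)\subseteq E(G')$, so each bag $\chi'(v')$, being a clique of $G+F$, is also a clique of $G'$. Applying Theorem \ref{cliqueinbag} to the tree decomposition $(T,\chi)$ of $G'$ then gives, for every $v'\in V(T')$, a vertex $v\in V(T)$ with $\chi'(v')\subseteq\chi(v)$, as desired. The argument is essentially bookkeeping once the two classical inputs are in place; the only points requiring care are in Step 1 (recognizing that the fill-in $G'$ of an arbitrary tree decomposition is chordal) and the fact invoked in Step 2 that a chordal graph has a tree decomposition whose bags are its maximal cliques — neither is difficult, but that is where the content lies.
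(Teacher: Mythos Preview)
Your proof is correct and follows essentially the same approach as the paper: form the chordal fill-in $G'$ from the bags of $(T,\chi)$, take a minimal chordal fill-in $F\subseteq E(G')\setminus E(G)$, use a clique tree of $G+F$ as the structured decomposition, and recover the containment property via Theorem~\ref{cliqueinbag}. The only cosmetic differences are that you justify chordality of $G'$ via the subtree-intersection characterization (the paper just says ``it is easy to check'') and you invoke the clique tree with maximal-clique bags directly, whereas the paper obtains maximal cliques by minimizing $|V(T'')|$.
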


\begin{proof}
  Let $(T,\chi)$ be a tree decomposition of $G$.
  It is easy to check that
  the graph $G'$ obtained from $G$ by adding all edges $xy$ such
  that $x,y \in \chi(v)$ for some $v \in V(T)$ is chordal and that
  $(T, \chi)$ is a tree decomposition of $G'$. It follows that
  there exists a minimal chordal fill-in $F$ of $G$ such that
  $F \subseteq E(G') \setminus E(G)$; let $G''=G+F$.
  In particular, every clique of $G''$ is a subset of a clique of $G'$.
  Since by Theorem \ref{cliqueinbag} every clique of $G'$ is contained in a bag $\chi(v)$ for some $v \in V(T)$, it follows that every clique of $G''$ is contained in a bag $\chi(v)$ for some $v \in T$.

  Next, since $G''$ is chordal, there is a tree decomposition $(T'',\chi'')$ of $G''$  such that $\chi''(v)$ is a clique of $G''$ for every $v \in V(T'')$.
  By choosing $(T'',\chi'')$ with $|V(T'')|$ minimum, we may assume that
  there do not exist $v_1,v_2 \in T''$ such that $\chi''(v_1) \subseteq \chi''(v_2)$. This implies that $\chi''(v)$ is a maximal clique of $G''$ (and therefore    a PMC of $G$)  for every $v \in V(T'')$. 
  Lastly, since $G$ is a subgraph of $G''$, it follows that $(T'',\chi'')$   is a tree decomposition of $G$.     This proves Theorem \ref{structured}.
\end{proof}

\subsection{Lean tree decompositions}

Let $k>0$ be an integer. A tree decomposition $(T, \chi)$ is called {\em $k$-lean} if the following hold: 
\begin{itemize}
    \item $adh(T,\chi) < k$; and 
    \item for all $t,t' \in V(T)$ and sets $Z \subseteq \chi(t)$ 
and $Z' \subseteq \chi(t')$ with $|Z|=|Z'| \leq k$, either $G$ contains $|Z|$ disjoint paths from $Z$ to $Z'$, or some edge $ss'$ of $tTt'$ satisfies $|adh(ss')| <|Z|$.
\end{itemize}

For a tree $T$ and an edge $tt'$ of $T$, we  denote by
$T_{t \rightarrow t'}$ the
 component of $T \setminus t$ containing $t'$. Let
 $G_{t \rightarrow t'}=G[\bigcup_{v \in T_{t \rightarrow t'}} \chi(t)]$.
    A tree decomposition
$(T,\chi)$ is {\em tight} if for every edge $tt' \in E(T)$  there is a component
$D$ of $G_{t \rightarrow t'} \setminus \chi(t)$ such that
$\chi(t) \cap \chi(t') \subseteq N(D)$ (and therefore $\chi(t) \cap \chi(t') = N(D)$). 

    The following definition first appeared in  \cite{BD}. Given a tree decomposition $(T, \chi)$ of an $n$-vertex graph $G$, its \emph{fatness} is the vector $(a_n, \dots, a_0)$ where $a_i$ denotes the number of bags of $T$ of size $i$. A tree decomposition $(T, \chi)$ of $G$ is \emph{$k$-atomic} if $adh(T, \chi) < k$ and the fatness of $(T, \chi)$ is lexicographically minimum among all tree decompositions of $G$ with adhesion less than $k$.

The following is immediate from the definition:
\begin{theorem} 
  \label{atomicexists}
  For every $k>1$, every graph admits a $k$-atomic tree decomposition.
\end{theorem}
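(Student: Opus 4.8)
The plan is to treat this as a routine well-ordering argument; the remark preceding the statement already flags that it is immediate, so all that needs checking is (a) that the relevant family of tree decompositions is nonempty, and (b) that lexicographic order restricts to a well-order on the set of fatness vectors that can occur.

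First I would verify nonemptiness. Take the one-node tree decomposition $(T,\chi)$ with $V(T)=\{t\}$, $E(T)=\emptyset$, and $\chi(t)=V(G)$; this is a tree decomposition of $G$, and since $E(T)=\emptyset$ we have $adh(T,\chi)=0<k$ for every $k>1$. Hence the family $\mathcal{T}_k$ of all tree decompositions of $G$ with adhesion less than $k$ is nonempty. Next, set $n=|V(G)|$ and note that every bag of every tree decomposition of $G$ has size at most $n$, so the fatness of any $(T,\chi)\in\mathcal{T}_k$ is a vector $(a_n,\dots,a_0)\in\mathbb{Z}_{\ge 0}^{n+1}$ of fixed finite length $n+1$. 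Let $\mathcal{F}_k\subseteq\mathbb{Z}_{\ge 0}^{n+1}$ be the (nonempty) set of fatness vectors of members of $\mathcal{T}_k$. I would then invoke the standard fact that, for any fixed finite $m$, the lexicographic order on $\mathbb{Z}_{\ge 0}^{m}$ is a well-order (an easy induction on $m$). Applying this with $m=n+1$, the nonempty set $\mathcal{F}_k$ has a lexicographically least element $\varphi$; any $(T,\chi)\in\mathcal{T}_k$ with fatness equal to $\varphi$ — which exists by the definition of $\mathcal{F}_k$ — is then $k$-atomic by definition, and this proves the theorem.

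The only point that requires any care, and hence the ``main obstacle'' such as it is, is the finiteness of the length of the fatness vector: lexicographic order on infinite sequences of nonnegative integers is not a well-order, so it genuinely matters that bag sizes (and therefore the indices occurring in the fatness vector) are bounded by $n=|V(G)|$. Once that observation is in place, nothing else is needed; in particular we never have to worry about whether the tree $T$ can be taken to be finite, since we are minimizing the fatness vector rather than any quantity depending on $|V(T)|$.
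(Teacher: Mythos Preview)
Your argument is correct and matches the paper's approach: the paper simply declares the statement ``immediate from the definition'' with no further proof, and you have spelled out exactly the two points that make it immediate (nonemptiness of the family and well-foundedness of lexicographic order on $\mathbb{Z}_{\ge 0}^{n+1}$). One small caveat: your closing remark that ``we never have to worry about whether the tree $T$ can be taken to be finite'' is slightly off, since the entries $a_i$ of the fatness vector are only guaranteed to lie in $\mathbb{Z}_{\ge 0}$ when $T$ is finite; but this is harmless here because the paper's standing convention that all graphs are finite covers the trees appearing in tree decompositions as well.
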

 
It was observed in \cite{kblockpaper} that 
\cite{BD} contains a proof of the following:

\begin{theorem} [Bellenbaum and Diestel \cite{BD}, see 
 Carmesin, Diestel, Hamann, Hundertmark \cite{kblockpaper}, see also Wei{\ss}auer \cite{Weissauer}]
  \label{atomictolean}
  Every $k$-atomic tree decomposition is $k$-lean.
\end{theorem}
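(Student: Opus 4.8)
The plan is to argue by contradiction, exploiting the lexicographic minimality of fatness built into the definition of a $k$-atomic tree decomposition. Suppose $(T,\chi)$ is $k$-atomic but not $k$-lean. Since $adh(T,\chi)<k$ holds by definition, it is the second bullet in the definition of $k$-leanness that fails, so there are $t_1,t_2\in V(T)$ and sets $Z_1\subseteq\chi(t_1)$, $Z_2\subseteq\chi(t_2)$ with $|Z_1|=|Z_2|=\ell\le k$ such that $G$ has no family of $\ell$ pairwise disjoint $Z_1$--$Z_2$ paths while every edge of $t_1Tt_2$ has adhesion of size at least $\ell$. Call such a configuration a bad quadruple and fix one with $\ell$ minimum and then with $t_1Tt_2$ as short as possible. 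By minimality of $\ell$, the maximum number of pairwise disjoint $Z_1$--$Z_2$ paths in $G$ is exactly $\ell-1$ (if it were smaller, replacing $Z_1,Z_2$ by subsets of size one more than this maximum would give a bad quadruple with smaller $\ell$), so by Theorem~\ref{Menger} there is a set $S$ with $|S|=\ell-1$ separating $Z_1$ from $Z_2$ in $G$; extend it to a separation $(A,B)$ of $G$ with $A\cap B=S$, $Z_1\subseteq A$, $Z_2\subseteq B$, chosen with $A$ inclusion-minimal among all separations of order $\ell-1$ with $Z_1\subseteq A$ and $Z_2\subseteq B$ (a minimal one exists, and by submodularity of the order of separations it is well behaved).

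Next I would construct a competing tree decomposition $(T',\chi')$ of $G$ with $adh(T',\chi')<k$ but strictly smaller fatness, contradicting $k$-atomicity. The construction is a surgery on $(T,\chi)$ along the path $P:=t_1Tt_2$ and the separation $(A,B)$: each node of $T$ is assigned to the $A$-side or the $B$-side, an $A$-side node $x$ receives the bag $\chi(x)\cap A$ (with vertices of $S$ reinstated where connectivity across the cut requires it) and symmetrically on the $B$-side, each node of $P$ is split into an $A$-part and a $B$-part joined by a new edge with adhesion $S$, and the subtrees of $T$ hanging off $P$ are reattached on the appropriate side so that the $A$-to-$B$ transition along $P$ crosses an adhesion contained in $S$. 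One then checks that $(T',\chi')$ is a genuine tree decomposition: coverage of vertices and edges is preserved because $(A,B)$ is a separation, so no edge of $G$ joins $A\setminus S$ to $B\setminus S$; and the set of nodes whose bag contains a fixed vertex $v$ is connected in $T'$, since $v$ lies wholly in $A$, wholly in $B$, or in $S$, and $S$ was put back exactly along the cut. One also checks $adh(T',\chi')<k$: the only adhesion newly created by the surgery is $S$, of size $\ell-1<k$, and every other adhesion is obtained from an old one by intersecting with a side of $(A,B)$ and possibly adding a piece of $S$, which with the choices above stays below $k$.

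The main obstacle is the remaining fatness bookkeeping. One must show that no new bag exceeds $\max_x|\chi(x)|$ and that at least one bag of maximum size is replaced by two strictly smaller bags, so that comparing the two fatness vectors coordinatewise from the top one reaches a coordinate at which $(T',\chi')$ is strictly smaller. The source of the strict gain is the elementary identity that the $A$-half $(\chi(x)\cap A)\cup S$ of a split bag $\chi(x)$ has size $|\chi(x)|-|\chi(x)\cap B|+|S|$, which is strictly less than $|\chi(x)|$ precisely when $|\chi(x)\cap B|\ge\ell$, and symmetrically for the $B$-half; and a suitable bag on $P$ meets $A$ in at least $\ell$ vertices and $B$ in at least $\ell$ vertices --- this uses $Z_1\subseteq A$, $Z_2\subseteq B$ and $|Z_1|=|Z_2|=\ell$ in the degenerate case $t_1=t_2$, and the fact that every adhesion along $P$ has size at least $\ell$ in general --- so splitting that bag is a genuine strict decrease. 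The delicate point, and the reason the inclusion-minimal choice of $(A,B)$ and the shortest-path choice of $P$ are needed, is to guarantee simultaneously that \emph{no} bag grows --- in particular that reinstating a piece of $S$ in a bag off $P$ never pushes its size above the current maximum --- and that the newly created adhesions stay below $k$; these minimality choices localize the effect of the surgery and rule out the configurations that would inflate a bag or an adhesion. Once these checks go through, $(T',\chi')$ has lexicographically smaller fatness than $(T,\chi)$, contradicting $k$-atomicity, and hence $(T,\chi)$ is $k$-lean. (This is in essence the argument of Bellenbaum and Diestel; see also \cite{BD,kblockpaper,Weissauer}.)
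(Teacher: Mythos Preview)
The paper does not give its own proof of this theorem: it is stated as a known result with references to \cite{BD,kblockpaper,Weissauer} and no argument is supplied. Your proposal is a faithful outline of the Bellenbaum--Diestel surgery argument from those references, so in that sense it is ``the same approach'' as what the paper points to.

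That said, what you have written is a plan rather than a proof: the crucial fatness bookkeeping---that no bag grows under the surgery and that at least one bag of maximum size strictly shrinks---is asserted but not carried out, and you yourself flag it as ``the delicate point.'' In the actual Bellenbaum--Diestel argument this step requires a careful case analysis of how each bag interacts with the separation $(A,B)$ and with the path $P$, and the precise rule for reinserting $S$ into bags is what makes it work; your sketch does not specify this rule concretely enough to verify the claim. If you intend this as a self-contained proof rather than a pointer to the literature, that verification needs to be written out.
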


The same proof also gives the following, which is in fact part of the definition of $k$-leanness in \cite{kblockpaper}: 
\begin{theorem}[Bellenbaum and Diestel \cite{BD}, see 
 Carmesin, Diestel, Hamann, Hundertmark \cite{kblockpaper}, see also Wei{\ss}auer \cite{Weissauer}]
  \label{atomictooptimal}
  Let $(T,\chi)$ be a $k$-atomic tree decomposition of $G$ and let
  $tt'$ be an edge of $T$. Then  both $\chi(t) \setminus \chi(t') \neq \emptyset$ and
    $\chi(t') \setminus \chi(t) \neq \emptyset$. 
\end{theorem}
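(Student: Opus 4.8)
The plan is to proceed by contradiction. Suppose $(T,\chi)$ is $k$-atomic and some edge $tt'\in E(T)$ violates the conclusion; by symmetry we may assume $\chi(t)\subseteq\chi(t')$, i.e.\ $\chi(t)\setminus\chi(t')=\emptyset$. I would then build a tree decomposition of $G$ that still has adhesion less than $k$ but strictly smaller fatness, contradicting the lexicographic minimality in the definition of $k$-atomic. The natural construction is to contract the edge $tt'$: let $T^{\ast}$ be obtained from $T$ by deleting $t$ and joining every $s\in N_T(t)\setminus\{t'\}$ to $t'$, and set $\chi^{\ast}(u)=\chi(u)$ for all $u\in V(T^{\ast})=V(T)\setminus\{t\}$.

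First I would verify that $(T^{\ast},\chi^{\ast})$ is a tree decomposition of $G$. Conditions (i) and (ii) are immediate, since we deleted only the node $t$ and $\chi(t)\subseteq\chi(t')=\chi^{\ast}(t')$, so every vertex and every edge of $G$ that was covered by $\chi(t)$ is still covered by $\chi^{\ast}(t')$. For condition (iii), note that the contraction map $\pi\colon V(T)\to V(T^{\ast})$ (the identity off $t$, sending $t\mapsto t'$) is a graph homomorphism onto a tree, and for each vertex $v$ of $G$ the image under $\pi$ of the subtree $\{u:v\in\chi(u)\}$ is exactly $\{u\in V(T^{\ast}):v\in\chi^{\ast}(u)\}$ and is connected --- here the inclusion $\chi(t)\subseteq\chi(t')$ is used again to handle the node $t'$.

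Next I would check that contracting $tt'$ does not raise any adhesion. Edges of $T^{\ast}$ not incident to $t'$, and edges $t's'$ with $s'\in N_T(t')\setminus\{t\}$, keep the same pair of bags, hence the same adhesion. The only edges to worry about are the $t's$ with $s\in N_T(t)\setminus\{t'\}$; for these I would show $\chi(t')\cap\chi(s)=\chi(t)\cap\chi(s)$. The inclusion $\supseteq$ is clear from $\chi(t)\subseteq\chi(t')$; for $\subseteq$, observe that since $s$ and $t'$ are both neighbours of $t$ in the tree $T$, the unique path in $T$ from $s$ to $t'$ passes through $t$, so by condition (iii) any vertex of $G$ lying in both $\chi(s)$ and $\chi(t')$ also lies in $\chi(t)$. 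Therefore $adh(T^{\ast},\chi^{\ast})\le adh(T,\chi)<k$.

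Finally I would compare fatnesses. The multiset of bags of $(T^{\ast},\chi^{\ast})$ is that of $(T,\chi)$ with the single bag $\chi(t)$ removed, so writing $s=|\chi(t)|$, the fatness vector decreases by one in coordinate $s$ and is unchanged in all coordinates indexed above $s$; hence it is lexicographically smaller than the fatness of $(T,\chi)$. This contradicts the $k$-atomicity of $(T,\chi)$, and running the symmetric argument proves both $\chi(t)\setminus\chi(t')\ne\emptyset$ and $\chi(t')\setminus\chi(t)\ne\emptyset$. I do not anticipate a genuine obstacle; the one point that needs care is the adhesion bound, where it is essential that contracting $tt'$ does not merge two bags whose intersection with a common neighbour is larger than before --- this is exactly what the ``path through $t$'' argument rules out.
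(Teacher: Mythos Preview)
Your argument is correct: contracting the edge $tt'$ when $\chi(t)\subseteq\chi(t')$ produces a tree decomposition of the same adhesion and strictly smaller fatness, contradicting $k$-atomicity; every step (the verification of axiom (iii), the equality $\chi(t')\cap\chi(s)=\chi(t)\cap\chi(s)$ for $s\in N_T(t)\setminus\{t'\}$ via the path-through-$t$ observation, and the lexicographic drop in fatness) is sound.

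As for comparison, the paper does not actually give a proof of this statement. It is recorded there as a known fact, with the remark that it is yielded by the same argument (from Bellenbaum--Diestel, as observed in Carmesin--Diestel--Hamann--Hundertmark and Wei{\ss}auer) that shows $k$-atomic decompositions are $k$-lean, and that indeed in \cite{kblockpaper} this property is folded into the definition of $k$-leanness. Your direct edge-contraction argument is the standard elementary proof of this particular consequence and is exactly what one would extract from those references; it is self-contained and does not need the full machinery establishing $k$-leanness.
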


We also need the following: 
\begin{theorem} [Wei{\ss}auer \cite{Weissauer}]
  \label{atomictotight}
  Every $k$-atomic tree decomposition is tight. 
\end{theorem}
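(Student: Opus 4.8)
The plan is to prove the contrapositive, exploiting the lexicographic minimality built into the definition of $k$-atomic: if a $k$-atomic tree decomposition $(T,\chi)$ of $G$ were not tight, I would exhibit a tree decomposition of $G$ with adhesion still below $k$ but with lexicographically smaller fatness, contradicting $k$-atomicity. So suppose $tt'\in E(T)$, with a chosen orientation, witnesses non-tightness; write $S=\chi(t)\cap\chi(t')$ and $A=V(G_{t\to t'})$. Among all such oriented edges, I would choose one minimizing $|V(T_{t\to t'})|$.

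Two preliminary reductions come first. Since every $v\in\chi(t)\setminus\chi(t')$ misses all of $T_{t\to t'}$ (by connectivity of $\{s:v\in\chi(s)\}$), we get $\chi(t)\cap A=S$, hence $G_{t\to t'}\setminus\chi(t)=G[A\setminus S]$; let $D_1,\dots,D_m$ be its components and $S_i=N(D_i)\subseteq S$, so that non-tightness of $tt'$ means exactly that $S_i\subsetneq S$ for every $i$. By Theorem~\ref{atomictooptimal}, $\chi(t')\setminus S=\chi(t')\setminus\chi(t)\ne\emptyset$, so $S\subsetneq\chi(t')$ and in particular $|\chi(t')|>|S|$. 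Next I would prove a \emph{no redundant vertex} claim: every $v\in S$ has a neighbour in $A\setminus S$. Indeed, otherwise deleting $v$ from $\chi(w)$ for all $w\in V(T_{t\to t'})$ yields a tree decomposition of $G$ (all axioms check directly: every edge at $v$ is then captured by a bag on the $t$-side, and the bags still containing $v$ form a subtree) whose adhesion is still $<k$ but whose fatness is strictly smaller, because the largest bag of $T_{t\to t'}$ containing $v$ shrinks while no larger bag changes. Hence $\bigcup_iS_i=S$, and since each $S_i\subsetneq S$ this forces $m\ge2$.

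Now comes the surgery. The clean case is $V(T_{t\to t'})=\{t'\}$: then $A=\chi(t')$, so each $D_i\cup S_i\subseteq\chi(t')$, and I replace the leaf $t'$ by $m$ new leaves attached to $t$ with bags $D_1\cup S_1,\dots,D_m\cup S_m$. This is a valid tree decomposition (the $D_i$ are pairwise anticomplete, every edge incident to $D_i$ lies inside $D_i\cup S_i$ because $N(D_i)=S_i$, every edge inside $S$ lies in $\chi(t)$, and connectivity holds since each new bag contains all of the corresponding $S_i$); the new adhesions $S_i$ have size $<|S|<k$; and only $\chi(t')$ is altered, being replaced by the $m$ bags $D_i\cup S_i$, each of size $|D_i|+|S_i|\le|\chi(t')|-1$ (using $m\ge2$ and $S_i\subsetneq S$). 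Thus the top nonzero coordinate of the fatness vector strictly drops, contradicting $k$-atomicity. The purpose of the minimal choice of $tt'$ is to reduce the general case to this one: if $t'$ had a neighbour $t''\ne t$ in $T_{t\to t'}$, then $T_{t'\to t''}$ is a strictly smaller subtree, and I would argue --- using that minimality forces each adhesion $\chi(t')\cap\chi(t'')$ to have a full component on the $t''$-side, together with $N(D_i)=S_i$ --- that non-tightness at $tt'$ propagates to a non-tight edge with a strictly smaller ``$t'$-side'', contradicting the choice of $tt'$.

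The main obstacle is precisely this propagation step --- equivalently, controlling the fatness directly in the general (non-leaf) surgery. The naive ``component split'', which replaces each bag $\chi(w)$ of $T_{t\to t'}$ by the $m$ bags $\chi(w)\cap(D_i\cup S_i)$, is a perfectly valid tree decomposition of $G$ with adhesion $<k$, and one can show no bag grows and $\chi(t')$ (which contains $S$, hence lies in no $D_i\cup S_i$) splits into strictly smaller bags; but it can \emph{fail} to decrease the fatness when some maximum-size bag $\chi(w)$ with $w\ne t'$ is already contained in a single $D_i\cup S_i$, for then one of its pieces equals $\chi(w)$ while the split merely adds further, smaller bags, nudging the vector upward. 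Ruling this out --- showing that for a minimal witness no maximum-size bag can hide inside one component's territory, or else engineering the surgery so that no near-maximum bag is duplicated --- is the only genuinely delicate point; the verification of the tree-decomposition axioms for each candidate modification is routine bookkeeping.
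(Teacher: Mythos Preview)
The paper does not prove this theorem; it is quoted from Wei{\ss}auer without argument, so there is no in-paper proof to compare against. Your overall strategy (contrapositive via lexicographic minimality of the fatness) is exactly the standard one, and your preliminary reductions --- that $\chi(t)\cap A=S$, that every $v\in S$ has a neighbour in $A\setminus S$, that $\bigcup_iS_i=S$ and hence $m\ge2$ --- are all correct. The leaf case is also handled correctly.

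However, the obstacle you flag in the general case is not actually an obstacle: your $m$-way component split already strictly decreases the fatness, and the reduction to the leaf case (together with the ``propagation'' step you could not complete) is unnecessary. Call $w\in T_{t\to t'}$ \emph{captured} if $\chi(w)\subseteq D_i\cup S_i$ for some $i$, and \emph{split} otherwise. Since $\chi(w)\not\subseteq S$ for every $w\in T_{t\to t'}$ (otherwise $\chi(w)\subseteq\chi(t)$, and applying Theorem~\ref{atomictooptimal} to the first edge of the $w$--$t$ path yields a contradiction via the subtree property), each $\chi(w)$ meets some $D_j$; so if $w$ is captured the index $i$ is unique, and for $j\ne i$ one has $\chi^{(j)}(w)\subseteq(D_i\cup S_i)\cap(D_j\cup S_j)=S_i\cap S_j$, of size strictly less than $|S|$. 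The bag $\chi(t')$ is split, because $S\subseteq\chi(t')$ but $S\not\subseteq S_i$. Now let $r^*$ be the largest size of a split bag in $T_{t\to t'}$; then $r^*\ge|\chi(t')|>|S|$. For each $r>r^*$, every size-$r$ bag of $T_{t\to t'}$ is captured and therefore contributes exactly one piece of size $r$ (its remaining pieces have size $<|S|<r$), while no bag of size $\le r$ can contribute a piece of size $r$; hence $a'_r=a_r$. At $r=r^*$ the same accounting gives $a'_{r^*}<a_{r^*}$, since the split bag of size $r^*$ contributes no piece of that size and captured bags of larger size still contribute nothing. Thus the fatness strictly drops, finishing the argument directly. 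The ``nudging upward'' you worried about occurs only at indices below $|S|$, well after the lexicographic comparison has already been decided in your favour.
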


We also  have  the following, which can be easily deduced  by combining Lemmas~7 and 9 of Wei\ss auer
\cite{Weissauer} and using Theorem \ref{atomictolean}. 

\begin{theorem}
\label{bigdegnonsep1}
  Let $G$ be a graph, let $k \geq 3$ and let $(T, \chi)$ be
  $k$-atomic tree decomposition of $G$. Let $t \in V(T)$. If
  $u,v \in \chi(t)$ have degree at least $(2k-2)(k-2)$ in $\hat\chi(t)$, then
  $u$ and $v$ are not separated in $G$ by a set $X \subseteq V(G) \setminus \{u, v\}$ of size less than $k$.
\end{theorem}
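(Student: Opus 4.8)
The plan is to argue by contradiction. Suppose some $X\subseteq V(G)\setminus\{u,v\}$ with $|X|\le k-1$ separates $u$ from $v$ in $G$; let $C_u$ and $C_v$ be the components of $G\setminus X$ containing $u$ and $v$, which are distinct. The heart of the proof is the following claim: \emph{if $w\in\chi(t)\setminus X$ has degree at least $(2k-2)(k-2)$ in $\hat\chi(t)$, then the component of $G\setminus X$ containing $w$ contains at least $k$ vertices of $\chi(t)$.} Applying the claim to $u$ and to $v$ yields sets $Z\subseteq C_u\cap\chi(t)$ and $Z'\subseteq C_v\cap\chi(t)$ with $|Z|=|Z'|=k$. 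Since $(T,\chi)$ is $k$-lean by Theorem~\ref{atomictolean}, applying $k$-leanness with $t'=t$ (so that the alternative about a small adhesion on $tTt'$ is vacuous) gives $k$ pairwise disjoint paths in $G$ from $Z$ to $Z'$. Each such path runs between the distinct components $C_u$ and $C_v$ of $G\setminus X$, hence meets $X$; as the paths are pairwise disjoint this forces $|X|\ge k$, a contradiction.

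To prove the claim, I would first use that $(T,\chi)$ is tight (Theorem~\ref{atomictotight}) to record that for each edge $ty\in E(T)$ there is a component $D_{ty}$ of $G\setminus\chi(t)$ with $adh(ty)\subseteq N(D_{ty})$, and that the components $D_{ty}$ with $y\in N_T(t)$ are pairwise disjoint and disjoint from $\chi(t)$ (they lie in distinct components of $T\setminus t$). By definition, the set of neighbours of $w$ in $\hat\chi(t)$ is $(N(w)\cap\chi(t))\cup\bigcup_{y:\,w\in adh(ty)}(adh(ty)\setminus\{w\})$. Assume now, for the sake of the claim, that $|C_w\cap\chi(t)|\le k-1$, where $C_w$ is the component of $G\setminus X$ through $w$. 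Classify each neighbour $z$ of $w$ in $\hat\chi(t)$: either $z\in X$; or $z\notin X$ and $z\in C_w$ --- this includes every $z\in N(w)\setminus X$, and also every $z\in adh(ty)\setminus X$ such that $D_{ty}\cap X=\emptyset$, since then $G[D_{ty}\cup(adh(ty)\setminus X)]$ is connected, avoids $X$, and contains both $w$ and $z$; or else $z\notin X\cup N(w)$ and every adhesion $adh(ty)$ with $z,w\in adh(ty)$ is ``bad'', meaning $D_{ty}\cap X\neq\emptyset$. The first type contributes at most $|X\cap\chi(t)|$ vertices, the second at most $|C_w\cap\chi(t)|-1\le k-2$, and the third at most $\sum_{i\ \mathrm{bad}}(|adh(ty_i)|-1)\le(k-2)b$, where $b$ is the number of bad edges at $t$. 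Because the $D_{ty}$ are pairwise disjoint, disjoint from $\chi(t)$, and each bad one meets $X$, we get $|X\cap\chi(t)|+b\le|X|\le k-1$. Maximising $|X\cap\chi(t)|+(b+1)(k-2)$ subject to this constraint (using $adh(T,\chi)<k$) bounds the degree of $w$ in $\hat\chi(t)$ by $k^2-2k$, which is strictly less than $(2k-2)(k-2)$ whenever $(k-2)^2>0$, i.e.\ for all $k\ge3$ --- contradicting the hypothesis on $w$. This proves the claim.

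The step I expect to be the main obstacle is getting the degree estimate in the claim tight enough for small $k$ (notably $k=3$): bounding the number of bad edges and the number of torso-neighbours of $w$ lying in $X$ separately, each by $|X|\le k-1$, only gives a bound of about $k^2-k$, which does not beat $(2k-2)(k-2)$ when $k=3$. The fix is to observe that the vertices of $X$ used to spoil adhesions lie in the sets $D_{ty}$, hence outside $\chi(t)$, and are therefore disjoint from the torso-neighbours of $w$ that lie in $X$ (which lie in $\chi(t)$); so these two counts must share the single budget $|X|\le k-1$, which sharpens the bound to $k^2-2k$ and makes the argument work for every $k\ge3$. The remaining ingredients --- extracting the full components $D_{ty}$ from tightness, and the concluding Menger-style counting via $k$-leanness --- are routine.
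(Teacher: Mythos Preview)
Your proof is correct. The paper does not give a self-contained argument here; it simply cites Lemmas~7 and~9 of Wei{\ss}auer together with Theorem~\ref{atomictolean}. Your proof unpacks exactly the ingredients behind that citation: tightness supplies the pairwise disjoint full components $D_{ty}$ witnessing the torso edges, and $k$-leanness (applied with $t=t'$, so the small-adhesion alternative is vacuous) converts two large subsets of $\chi(t)$ lying in different components of $G\setminus X$ into $k$ disjoint paths, each forced to cross $X$.

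Two remarks. First, a small point of hygiene: the tightness hypothesis in the paper gives $D_{ty}$ as a component of $G_{t\to y}\setminus\chi(t)$, not of $G\setminus\chi(t)$. These coincide (since $\chi(t)$ separates the subtrees of $T\setminus t$), but it is worth saying so, as you use both that the $D_{ty}$ are pairwise disjoint (clear from the subtree picture) and that $N_G(D_{ty})=adh(ty)$ (which needs the identification with components of $G\setminus\chi(t)$). Second, your counting is in fact sharper than required: you obtain $\deg_{\hat\chi(t)}(w)\le k(k-2)$ under the assumption $|C_w\cap\chi(t)|\le k-1$, so the theorem already holds with the weaker hypothesis $\deg_{\hat\chi(t)}\ge k(k-2)+1=(k-1)^2$, which matches $(2k-2)(k-2)$ at $k=3$ and is strictly smaller for $k\ge4$. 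The ``budget-sharing'' observation you flag---that the vertices of $X$ lying in $\chi(t)$ and those meeting the bad $D_{ty}$ are disjoint, so together they number at most $|X|\le k-1$---is precisely what makes the $k=3$ case go through.
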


Using Theorem \ref{Menger_vertex}, we deduce:

\begin{theorem}
\label{bigdegnonsep}
  Let $G$ be a graph, let $k \geq 3$ and let $(T, \chi)$ be
  $k$-atomic tree decomposition of $G$. Let $t \in V(T)$. If
  $u,v \in \chi(t)$ are non-adjacent vertices, each of  degree at least $(2k-2)(k-2)$ in $\hat\chi(t)$, then
there are $k$ pairwise internally vertex-disjoint paths in $G$ from $u$ to $v$.
\end{theorem}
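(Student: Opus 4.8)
The plan is to obtain this as an immediate corollary of Theorem \ref{bigdegnonsep1} together with the vertex form of Menger's theorem, Theorem \ref{Menger_vertex}. First I would apply Theorem \ref{bigdegnonsep1} to the vertices $u,v \in \chi(t)$: since $k \geq 3$ and each of $u,v$ has degree at least $(2k-2)(k-2)$ in $\hat\chi(t)$, that theorem yields that $u$ and $v$ cannot be separated in $G$ by any set $X \subseteq V(G) \setminus \{u,v\}$ with $|X| < k$.

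Next, I would use the hypothesis that $u$ and $v$ are non-adjacent to invoke Theorem \ref{Menger_vertex} (with this same value of $k$), which gives a dichotomy: either some $M \subseteq G \setminus \{u,v\}$ with $|M| < k$ separates $u$ and $v$ in $G$, or there are $k$ pairwise internally vertex-disjoint paths in $G$ from $u$ to $v$. The first alternative is precisely what the previous step ruled out, so the second alternative holds, and this is exactly the assertion of Theorem \ref{bigdegnonsep}.

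There is essentially no genuine obstacle here; the statement is a one-line deduction. The only points requiring (minimal) care are that the degree threshold $(2k-2)(k-2)$ in the hypothesis is exactly the one appearing in Theorem \ref{bigdegnonsep1}, so that theorem applies directly, and that the non-adjacency of $u$ and $v$ is what lets us use the clean vertex formulation Theorem \ref{Menger_vertex} rather than the set version Theorem \ref{Menger} (avoiding any bookkeeping about whether a path of length one, i.e.\ the potential edge $uv$, should be counted). Routing instead through Theorem \ref{Menger} applied to singletons would also work, but would be slightly less direct.
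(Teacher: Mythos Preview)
Your proposal is correct and matches the paper's own argument essentially verbatim: the paper simply states ``Using Theorem~\ref{Menger_vertex}, we deduce'' Theorem~\ref{bigdegnonsep}, i.e., it combines Theorem~\ref{bigdegnonsep1} with the vertex form of Menger's theorem exactly as you describe.
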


We finish this subsection with  a theorem about tight tree decompositions in theta-free graphs. Note that by Theorem \ref{atomictotight}, the following result applies in particular to $k$-atomic tree decompositions for every $k$.

 A \emph{cutset} $C \subseteq V(G)$ of $G$ is a (possibly empty) set of vertices such that $G \setminus C$ is disconnected. A {\em clique cutset} is a cutset that is either empty or a clique.
\begin{theorem}
  \label{connectedbranches}
  Let $G$ be a theta-free graph and assume that $G$ does not admit a clique
  cutset.
  Let $(T, \chi)$ be a tight tree decomposition of $G$.
  Then for every  edge $t_1t_2$ of $T$ the graph
  $G_{t_1\rightarrow t_2} \setminus \chi(t_1)$ is connected and
   $N(G_{t_1\rightarrow t_2} \setminus \chi(t_1))=\chi(t_1) \cap \chi(t_2)$.
  Moreover, if $t_0,t_1,t_2 \in V(T)$ and $t_1,t_2 \in N_T(t_0)$, then
  $\chi(t_0) \cap \chi(t_1) \neq \chi(t_0) \cap \chi(t_2)$.
\end{theorem}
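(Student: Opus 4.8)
The plan is, in each part, to assume the conclusion fails and to extract from that either a clique cutset of $G$ or a theta in $G$; since $G$ has neither, that is the contradiction. The engine is the following consequence of tightness: for an edge $tt'$ of $T$ there is a ``full component'' on the $t'$-side of the adhesion $\chi(t)\cap\chi(t')$, i.e.\ a connected induced subgraph of $G_{t\rightarrow t'}\setminus\chi(t)$ whose neighbourhood in $G$ is exactly $\chi(t)\cap\chi(t')$; applying this to \emph{both} orientations of an edge gives full components on both sides of its adhesion, which is precisely what is needed to build paths between two vertices of that adhesion. Throughout I would write $A_{t\rightarrow t'}:=V(G_{t\rightarrow t'})\setminus\chi(t)$ (so $G_{t\rightarrow t'}\setminus\chi(t)=G[A_{t\rightarrow t'}]$) and use the standard facts that for the edge $t_1t_2$ the sets $A_{t_1\rightarrow t_2}$, $S:=\chi(t_1)\cap\chi(t_2)$, $A_{t_2\rightarrow t_1}$ partition $V(G)$, that there are no edges between $A_{t_1\rightarrow t_2}$ and $A_{t_2\rightarrow t_1}$, and that the neighbourhoods of $A_{t_1\rightarrow t_2}$ and of $A_{t_2\rightarrow t_1}$ are contained in $S$.

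For the first assertion, fix the edge $t_1t_2$ and set $X=A_{t_1\rightarrow t_2}$, $Y=A_{t_2\rightarrow t_1}$. Tightness applied to $t_1t_2$ and to $t_2t_1$ yields a component $D$ of $G[X]$ with $N(D)=S$ and a component $D'$ of $G[Y]$ with $N(D')=S$; in particular $X,Y\neq\emptyset$. Suppose $G[X]$ is disconnected and pick a component $D_0\neq D$. Then $N(D_0)\subseteq S$ separates the nonempty set $D_0$ from the nonempty set $D$, so it is a cutset of $G$; since $G$ has no clique cutset it is not a clique, so I may choose nonadjacent $u,v\in N(D_0)$. As $u,v\in N(D_0)\cap N(D)\cap N(D')$ and $D_0,D,D'$ are connected, there are induced $u$--$v$ paths $P_0,P_1,P_2$ with interiors in $D_0$, $D$, $D'$ respectively; each has length at least $2$, they are internally disjoint, and their interiors are pairwise anticomplete (distinct components of $G[X]$, and $Y$ anticomplete to $X$), so $P_0\cup P_1\cup P_2$ is a theta — a contradiction. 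Hence $G_{t_1\rightarrow t_2}\setminus\chi(t_1)=G[X]$ is connected, and then its neighbourhood is $N(D)=S$.

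For the ``moreover'', suppose $t_1,t_2\in N_T(t_0)$ are distinct with $\chi(t_0)\cap\chi(t_1)=\chi(t_0)\cap\chi(t_2)=S$. For a neighbour $t$ of $t_0$ set $C_t:=A_{t_0\rightarrow t}$; the first assertion and tightness make each $G[C_t]$ connected and nonempty with $N(C_t)=\chi(t_0)\cap\chi(t)$. Applying the first assertion to $t_0t_1$ in both orientations and using the partition above, $G\setminus S$ has exactly two components, $C_{t_1}$ and $A_{t_1\rightarrow t_0}$. Now $G[C_{t_2}]$ is connected with $N(C_{t_2})=S$ and is disjoint from the nonempty set $C_{t_1}$, so $C_{t_2}=A_{t_1\rightarrow t_0}$. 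Reading $A_{t_1\rightarrow t_0}$ off the tree gives $A_{t_1\rightarrow t_0}=(\chi(t_0)\setminus S)\cup\bigcup_{t\in N_T(t_0)\setminus\{t_1\}}C_t$, a disjoint union one of whose terms is $C_{t_2}$; hence $\chi(t_0)\setminus S=\emptyset$ and $C_t=\emptyset$ for all $t\in N_T(t_0)\setminus\{t_1,t_2\}$. As each $C_t$ is nonempty this forces $N_T(t_0)=\{t_1,t_2\}$ and $\chi(t_0)=S\subseteq\chi(t_1)$, which contradicts that no bag of $(T,\chi)$ is contained in an adjacent bag (a property of the tight decompositions to which the theorem is applied — $k$-atomic ones, by Theorem~\ref{atomictooptimal}).

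The step I expect to be the main obstacle is producing the ``third path'' needed to exhibit a theta: two full components on the two sides of an adhesion cheaply supply two internally disjoint induced paths (with anticomplete interiors) between a nonadjacent pair of that adhesion, but a genuine theta needs a third. In the first assertion this third path is exactly what tightness in the reverse orientation delivers. In the ``moreover'' the natural third path would come from a second branch at $t_0$, and rather than build it directly I would reduce to the first assertion and count the components of $G\setminus S$; the one delicate point there is the degenerate case in which the common adhesion is all of $\chi(t_0)$, where the dichotomy between finding a theta and finding a clique cutset does not by itself suffice and one instead invokes that bags are not contained in adjacent bags.
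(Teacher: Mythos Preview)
Your proof of the first assertion is correct and essentially the same as the paper's: tightness in both orientations of the edge supplies full components $D\subseteq X$ and $D'\subseteq Y$, a second component $D_0$ of $G[X]$ gives a non-clique cutset $N(D_0)$, and a nonadjacent pair in $N(D_0)$ is then the pair of ends of a theta with path-interiors in $D_0,D,D'$.

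For the ``moreover'' the paper takes a shorter and more direct route than your component-counting. It picks $v_0\in\chi(t_0)\setminus\chi(t_2)$ and $v_2\in\chi(t_2)\setminus\chi(t_0)$, checks that neither lies in $\chi(t_1)$ (using $\chi(t_0)\cap\chi(t_1)=\chi(t_0)\cap\chi(t_2)$ and $\chi(t_1)\cap\chi(t_2)\subseteq\chi(t_0)$), and observes that $v_0,v_2$ then lie in different components of $G_{t_1\rightarrow t_0}\setminus\chi(t_1)$ --- indeed $N(C_{t_2})=S\subseteq\chi(t_1)$ pins $v_2$'s component inside $C_{t_2}$, which misses $v_0$ --- contradicting the first assertion. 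Your argument reaches the same endpoint $\chi(t_0)=S\subseteq\chi(t_1)$ by first identifying $C_{t_2}$ with $A_{t_1\rightarrow t_0}$ and then unpacking the latter as a disjoint union; this is correct but more roundabout.

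You are right to single out the degenerate case $\chi(t_0)\subseteq\chi(t_1)$: the paper's unexplained step ``there exist $v_0\in\chi(t_0)\setminus\chi(t_2)$'' is exactly where that same assumption is used silently, and tightness alone does \emph{not} supply it. For $G=C_5$ on $a\hbox{-}b\hbox{-}c\hbox{-}d\hbox{-}e\hbox{-}a$, the path decomposition $\chi(t_1)=\{a,b,c\}$, $\chi(t_0)=\{a,c\}$, $\chi(t_2)=\{a,c,d,e\}$ is tight, $G$ is theta-free with no clique cutset, yet $\chi(t_0)\cap\chi(t_1)=\chi(t_0)\cap\chi(t_2)$. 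So the ``moreover'' genuinely needs the no-bag-in-adjacent-bag property; both your proof and the paper's are adequate for the intended applications (to $k$-atomic decompositions, via Theorem~\ref{atomictooptimal}), and your flagging the assumption explicitly is an improvement in rigor.
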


\begin{proof}
  Suppose that $G_{t_1 \rightarrow t_2} \setminus \chi(t_1)$ is not connected. Then there exists a component $D_1$ of   $G_{t_1 \rightarrow t_2} \setminus \chi(t_1)$ such that   $N(D_1)= \chi(t_1) \cap \chi(t_2)$.
  Let $D_0$ be a component of  $G_{t_1 \rightarrow t_2} \setminus \chi(t_1)$
  different from $D_1$.
  Since $(T,\chi)$ is tight, there exists a component $D_2$ of 
  $G_{t_2 \rightarrow t_1} \setminus \chi(t_2)$ such that $N(D_2)=\chi(t_1) \cap \chi(t_2)$.
  Since $N(D_0)$ is not a clique cutset in $G$,
  there exist non-adjacent $x,y \in N(D_0) \subseteq \chi(t_1) \cap \chi(t_2)$.
  But now we get a theta with ends $x,y$ and paths with interiors in
  $D_0,D_1$ and $D_2$, respectively, a contradiction. This proves that
  $G_{t_1 \rightarrow t_2} \setminus \chi(t_1)$ is  connected. 
  
  To prove the second assertion, let  $t_1,t_2 \in N_T(t_0)$ and assume that
  $\chi(t_0) \cap \chi(t_1) = \chi(t_0) \cap \chi(t_2)$. Then,   there exist $v_0 \in \chi(t_0) \setminus \chi(t_2)$ and $v_2 \in \chi(t_2) \setminus \chi (t_0)$. Since $\chi(t_0) \cap \chi(t_1) = \chi(t_0) \cap \chi(t_2)$ and $\chi(t_1) \cap \chi(t_2) \subseteq \chi(t_0)$, it follows that $v_0, v_2 \not \in \chi(t_1)$. 
  But then  $v_0$ and $v_2$ belong to different components of
  $G_{t_1 \rightarrow t_0} \setminus \chi(t_1)$, contrary to the claim of the previous paragraph. This proves
    Theorem \ref{connectedbranches}.
%  , \dots, D_s$ be the components of
%  $G_{t_0 \rightarrow t} \setminus \chi(t_0)$. Let $T_1, \dots, T_s$
%  be $s$ copies of  $T_{t_0 \rightarrow t}$ where the copy of
%  $v \in  T_{t_0 \rightarrow t}$ is denoted by $v^i$. 
%  Let $T'$ be obtained from $T \setminus T_{t_0 \rightarrow t}$
%  by adding $T_1, \dots, T_s$ and making $t_0$ adjacent to
%  $t^1, \dots, t^s$. Now let $\chi'(v^i)=\chi(v) \cap N[D_i]$ for
%  every $v \in T_{t_0 \rightarrow t}$, and let $\chi'(v)=\chi(v)$
%  for every $v \not \in  T_{t_0 \rightarrow t}$. It is easy to
%  see that $(T',\chi')$ is an $m$-lean and tight tree decomposition of
 % $G$.
 % Repeating this construction for every vertex of $N_T(t_0)$,
 % \ref{connectedbranches} follows.
  \end{proof}

\subsection{Centers of tree decomposition}

We discuss another important feature of tree decompositions that we need. 
Let $G$ be an $n$-vertex  graph and let $(T, \chi)$ be a tree decomposition of $G$.
A vertex $t_0$ of $T$ is a  {\em center} of $(T, \chi)$ if for every
$t' \in N_T(t_0)$ we have
$|G_{t_0 \rightarrow t'} \setminus \chi(t_0)| \leq \frac{n}{2}$. The following lemma is a analogous to the standard proof that every tree has a centroid. 

\begin{theorem}
  \label{center}
  Let $(T,\chi)$ be a tree decomposition of a graph $G$. Then $(T, \chi)$ has a center. % of $G$ with $adh(T,\chi) < m$. If $G$ does not admit a  balanced separator of size less than  $m$, then $T$ has a center.
\end{theorem}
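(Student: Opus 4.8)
The plan is to follow the classical proof that every finite tree has a centroid, with ``mass'' measured by $|V(G_{t\rightarrow t'})\setminus\chi(t)|$. Write $n=|V(G)|$, and for adjacent $t,t'\in V(T)$ put $b(t,t')=|V(G_{t\rightarrow t'})\setminus\chi(t)|$; thus $t$ is a center precisely when $b(t,t')\le n/2$ for every $t'\in N_T(t)$. Everything rests on two disjointness facts, each immediate from the connectivity axiom~(iii) of a tree decomposition. \emph{First}, for a fixed $t\in V(T)$ the sets $V(G_{t\rightarrow t'})\setminus\chi(t)$, over $t'\in N_T(t)$, are pairwise disjoint: if a vertex $v$ lies in $V(G_{t\rightarrow t'})$ and in $V(G_{t\rightarrow t''})$ then $v\in\chi(w')$ and $v\in\chi(w'')$ for some $w'\in T_{t\rightarrow t'}$, $w''\in T_{t\rightarrow t''}$, and since $w',w''$ lie in distinct components of $T\setminus t$ the path $w'Tw''$ passes through $t$, so $v\in\chi(t)$ by~(iii); in particular at most one $t'\in N_T(t)$ has $b(t,t')>n/2$. \emph{Second}, for an edge $tt'\in E(T)$ the sets $V(G_{t\rightarrow t'})\setminus\chi(t)$ and $V(G_{t'\rightarrow t})\setminus\chi(t')$ are disjoint: a common vertex $v$ would lie in a bag on each side of the edge $tt'$, so the connecting $T$-path meets both $t$ and $t'$, whence $v\in\chi(t)\cap\chi(t')$ by~(iii), contradicting $v\notin\chi(t)$; as both sets lie in $V(G)$ this yields $b(t,t')+b(t',t)\le n$. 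I would prove these two facts first.

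Given them, suppose for contradiction that $(T,\chi)$ has no center. Then every $t\in V(T)$ has a neighbor $t'$ with $b(t,t')>n/2$, and by the first fact this neighbor is unique; write $g(t)$ for it, so $g\colon V(T)\to V(T)$ is well defined with $g(t)\in N_T(t)$. By the second fact, $g(t)=t'$ and $g(t')=t$ cannot hold together for an edge $tt'$, since that would give $b(t,t')+b(t',t)>n$. Hence $t\mapsto\{t,g(t)\}$ is an injection from $V(T)$ into $E(T)$: if $\{t,g(t)\}=\{s,g(s)\}$ with $s\neq t$, then (as $g(t)\neq t$) this set is $\{t,s\}$ with $g(t)=s$ and, symmetrically, $g(s)=t$, the forbidden configuration. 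Since $T$ is a finite tree, $|E(T)|=|V(T)|-1<|V(T)|$, so no such injection exists; this contradiction proves Theorem~\ref{center}.

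The step needing the most care — and the reason the textbook centroid argument cannot simply be quoted — is that that argument takes $t_0$ minimizing $\max_{t'\in N_T(t_0)}b(t_0,t')$ and claims that passing to an over-heavy neighbor strictly decreases this maximum, whereas here the bags can be nested (for instance $\chi(g(t_0))\subseteq\chi(t_0)$ is possible), so the usual source of strictness is lost. The argument above avoids monotonicity altogether; an alternative fix is to note that the walk $t_0,g(t_0),g(g(t_0)),\dots$ never backtracks — by the second fact, $b(t,g(t))>n/2$ forces $b(g(t),t)<n/2$ and hence $g(g(t))\neq t$ — so in a finite tree it must be a simple path, which is impossible once it is infinite. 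Either way, finiteness of $T$ is used, as is standard here.
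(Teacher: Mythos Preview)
Your proof is correct and follows essentially the same approach as the paper's: the paper orients each edge $tt'$ toward $t'$ whenever $b(t,t')>n/2$, observes via your ``second fact'' that this never demands both directions on an edge, and then takes a sink of the resulting orientation as the center --- which is exactly your injection $t\mapsto\{t,g(t)\}$ read the other way around (a sink is a vertex not in the image of $g$, and one exists because $|E(T)|<|V(T)|$). Your write-up is more explicit about deriving the disjointness from axiom~(iii), and your closing remarks on why the naive centroid-minimization argument can stall are a nice addition, but the underlying idea is the same.
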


\begin{proof}
Write $|V(G)|=n$.
  Let $D$ be the directed graph obtained from $T$ as follows. Let $tt'$ be an
  edge of $T$; direct $tt'$ from $t$ to $t'$ if
  $|G_{t \rightarrow t'} \setminus \chi(t)| > \frac{n}{2}$. As $G_{t \rightarrow t'} \setminus \chi(t)$ is disjoint from $G_{t' \rightarrow t} \setminus \chi(t')$ (because $G_{t \rightarrow t'} \cap G_{t' \rightarrow t} = \chi(t) \cap \chi(t')$), this does not prescribe conflicting orientations for edges of $T$.  For each edge whose direction is not determined by this, direct it arbitrarily. 
 
  It follows that  $D$ has a sink  $t_0$. Let $t' \in N_T(t_0)$. Then, because the edge $t_0t'$ is not directed from $t_0$ to $t'$, it follows that $|G_{t_0 \rightarrow t'} \setminus \chi(t_0)| \leq \frac{n}{2}$. 
  This proves Theorem \ref{center}.
\end{proof}

\subsection{Small separators}

We conclude this section by proving one more straightforward and well-known lemma about tree decompositions. 

\begin{lemma} \label{lem:septotd}
    Let $G$ be a graph. Let $X \subseteq V(G)$, and let $D_1, \dots, D_s$ be the components of $G \setminus X$. Then $\tw(G) \leq |X| + \max_{i \in \{1, \dots, s\}} \tw(D_i)$. 
\end{lemma}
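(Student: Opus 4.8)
The plan is to build a tree decomposition of $G$ by gluing together optimal tree decompositions of the components $D_1,\dots,D_s$ after inflating every bag by the set $X$. First, for each $i\in\{1,\dots,s\}$, fix a tree decomposition $(T_i,\chi_i)$ of $D_i$ of width exactly $\tw(D_i)$, and set $\chi_i'(t)=\chi_i(t)\cup X$ for every $t\in V(T_i)$. Then form a tree $T$ by taking the disjoint union of $T_1,\dots,T_s$ together with one new node $r$, and adding an edge from $r$ to an arbitrarily chosen node of $T_i$ for each $i$; set $\chi(r)=X$ and $\chi(t)=\chi_i'(t)$ for $t\in V(T_i)$. Since $T$ is obtained from a forest by adding one node adjacent to a single vertex of each of its trees, $T$ is a tree. (If $s=0$ then $V(G)=X$ and $\tw(G)=|X|-1$, so the claimed bound holds; thus assume $s\geq 1$.)

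Next I would check that $(T,\chi)$ is a tree decomposition of $G$. Every vertex of $X$ lies in $\chi(r)$, and every vertex of $D_i$ lies in some bag of $T_i$, which gives axiom~(i). For axiom~(ii): an edge with both ends in $X$ is covered by $\chi(r)$; an edge with both ends in some $D_i$ is covered inside $(T_i,\chi_i)$, and the bags of $T_i$ only grew; an edge with one end $x\in X$ and the other end $y\in D_i$ is covered by any bag $\chi_i(t)$ with $y\in\chi_i(t)$, since then $x,y\in\chi_i'(t)$; and there are no edges between distinct $D_i$ and $D_j$, as they are distinct components of $G\setminus X$. For axiom~(iii): the set of nodes whose bag contains a fixed $v\in X$ is all of $V(T)$, which is connected; and for $v\in D_i$ it is $\{t\in V(T_i):v\in\chi_i(t)\}$, which is connected in $T_i$, hence in $T$.

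Finally I would bound the width: $|\chi(r)|-1=|X|-1\leq |X|+\max_i\tw(D_i)$, and for $t\in V(T_i)$ we have $|\chi(t)|-1=|\chi_i(t)\cup X|-1\leq |\chi_i(t)|-1+|X|=\tw(D_i)+|X|\leq \max_i\tw(D_i)+|X|$. Hence $\width(T,\chi)\leq |X|+\max_{i\in\{1,\dots,s\}}\tw(D_i)$, which proves the lemma.

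There is no genuine obstacle here; the statement is routine. The only points that require a moment's care are that the edges between $X$ and the components (and the edges inside $X$) are indeed covered by the inflated bags, and that enlarging every bag by $X$ cannot violate the connectivity condition — which it cannot, since it merely expands the set of bags containing each vertex of $X$ to all of $V(T)$.
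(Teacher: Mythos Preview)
Your proof is correct and follows essentially the same approach as the paper: both take optimal tree decompositions of the components $D_i$, inflate every bag by $X$, and attach them to a fresh root node with bag $X$. Your version is simply more detailed in verifying the tree-decomposition axioms, whereas the paper declares this check ``easy'' and omits it.
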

\begin{proof}
    For every $i$, we let $(T_i, \chi_i)$ be a tree decomposition of $D_i$ of width
  $\tw(D_i)$. Let $T$ be a tree obtained from the union of $T_1, \dots, T_s$
  by adding a new vertex $t$ and making $t$ adjacent to exactly one vertex of
  each $T_i$. We now construct a tree decomposition $(T, \chi)$ of $G$.
  Let $\chi(t)=X$, and for every $t' \in T \cap T_i$ let
  $\chi(t')=\chi_i(t) \cup X$.
  It is easy to check that $(T, \chi)$ is a tree decomposition of $G$,
  and $\width (T, \chi) \leq \max_{i \in \{1, \dots, s\} }\tw(D_i) + |X|$. 
\end{proof}

\section{Star cutsets, wheels and blocks } \label{cutsets}

A {\em star cutset} in a graph $G$ is a cutset $S\subseteq V(G)$ such that  either $S=\emptyset$ or for some $x\in S$, $S\subseteq N[x]$.

Recall that a \emph{wheel} $(H, x)$ of $G$ consists of a hole $H$ and a vertex $x$ that has at least three neighbors in $H$ (and therefore $x \not \in H$). A \emph{sector} of $(H,x)$ is a path $P$ of $H$ whose ends are distinct and adjacent to $x$, and such that $x$ is anticomplete to $P^*$. A sector $P$ is a \emph{long sector} if $P^*$ is non-empty.  We now define several types of wheels that we will need. 

A wheel $(H, x)$
is a \emph{universal
wheel} if $x$ is complete to $H$. A wheel $(H, x)$ is a \emph{twin wheel} if $N(x) \cap H$ induces a path of length two.
%If $(H, x)$ is a twin wheel and $x_1 \dd x_2 \dd x_3$ is the path of length two induced by $N(x) \cap H$, we say $x_2$ is the \emph{clone of $x$ in $H$}. Note that if $(H, x)$ is a twin wheel and $x_2$ is the clone of $x$ in $H$, then $((H \setminus x_2) \cup x, x_2)$ is also a twin wheel.
A wheel $(H, x)$ is a \emph{short pyramid} if $|N(x) \cap H| = 3$ and $x$ has exactly two adjacent neighbors in $H$. A wheel is \emph{proper} if it is not a twin wheel or a short pyramid. We say that $x \in V(G)$ is  a {\em wheel center} or a {\em hub} if there exists $H$ such that 
$(H, x)$ is a proper wheel in $G$.
 We denote by $\Hub(G)$ the set of all hubs of $G$.

We need the following result, which was observed in \cite{TWI}:

\begin{theorem}[Abrishami, Chudnovsky, Vu\v{s}kovi\'c \cite{TWI}]\label{wheelstarcutset}
  Let $G \in \mathcal{C}$   and let $(H,v)$ be a proper  wheel in $G$.
  Then there is no component  $D$ of $G \setminus N[v]$
such that $H \subseteq N[D]$. 
\end{theorem}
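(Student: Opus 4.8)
The plan is to argue by contradiction. Suppose $G\in\mathcal C$, that $(H,v)$ is a proper wheel, and that some component $D$ of $G\setminus N[v]$ has $H\subseteq N[D]$; the goal is to exhibit an induced $C_4$, theta, prism, or even wheel in $G$, contradicting $G\in\mathcal C$.

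First I would extract the relevant structure. Since $D$ is a component of $G\setminus N[v]$ and $v$ has no neighbour in $D$, every vertex of $N(D)$ lies in $N(v)$; with $H\subseteq N[D]=D\cup N(D)$ this gives $V(H)\setminus D\subseteq N(v)\cap H$. Write $N(v)\cap H=\{h_1,\dots,h_k\}$; since $(H,v)$ is a wheel $k\geq3$, since $G$ is even-wheel-free $k$ is odd, and since $(H,v)$ is neither a twin wheel nor a short pyramid, when $k=3$ the vertices $h_1,h_2,h_3$ are pairwise non-adjacent. As no $h_i$ lies in $D$ we in fact have $V(H)\setminus D=\{h_1,\dots,h_k\}$, so each $h_i$ has a neighbour in $D$, and the interior of every long sector of $(H,v)$ --- being connected, anticomplete to $v$, and contained in $N[D]$ --- lies entirely in $D$. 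The key consequence of $C_4$-freeness is that no $d\in D$ is adjacent to two non-adjacent $h_i,h_j$ (else $\{v,h_i,d,h_j\}$ induces a $C_4$); hence $N(d)\cap\{h_1,\dots,h_k\}$ is always a clique, so it has at most two vertices, consecutive on $H$ if two, and when $k=3$ every $d\in D$ has at most one neighbour in $\{h_1,h_2,h_3\}$ and every long sector has length at least $3$.

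The heart of the proof is to build the forbidden subgraph inside $G[D\cup V(H)\cup\{v\}]$. Fix non-adjacent neighbours $h_i,h_j$ of $v$ (these exist since $k\geq3$ and the $h_i$ lie on a hole), and aim to find two internally disjoint induced paths $Q_1,Q_2$ from $h_i$ to $h_j$ with $Q_1^\ast,Q_2^\ast\subseteq D$ and $Q_1^\ast$ anticomplete to $Q_2^\ast$: since $Q_1^\ast,Q_2^\ast$ are then anticomplete to $v$ as well, $Q_1$, $Q_2$ and the path $h_i\dd v\dd h_j$ form a theta with ends $h_i,h_j$. One candidate for $Q_1$ is the sector of $H$ between $h_i$ and $h_j$ when it is long (its interior already lies in $D$, and distinct sectors of $H$ have pairwise anticomplete interiors); the other path is routed through $D$ so as to avoid the closed neighbourhood of $Q_1^\ast$, which is possible because $D$ is connected. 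In the case $k=3$ it is cleaner to instead pick a minimal connected subgraph $R\subseteq D$ reaching all of $h_1,h_2,h_3$: if $R$ is a subdivided claw with centre $c$, the three paths $v\dd h_t\dd(\text{leg of }R)\dd c$ form a theta with ends $v,c$ (using that $h_t$ is adjacent to exactly one vertex of $R$, that distinct legs of the tree $R$ are anticomplete, and that $h_1,h_2,h_3$ are pairwise non-adjacent), and the cases where $R$ is a single vertex (yielding a $C_4$) or a path are reduced to the claw case or to the ``two paths'' construction above. Finally, the degenerate possibility that $(H,v)$ is a universal wheel (all sectors short) is handled separately: then $D\cap V(H)=\emptyset$ and one finds a $C_4$ on an edge of $H$ together with a suitable edge of $D$.

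The step I expect to be the main obstacle is the construction of $Q_2$ (equivalently, producing the claw structure of the connector): because $D$ is adjacent to \emph{all} of $N(v)\cap H$ and absorbs every long-sector interior, a path routed through $D$ is a priori adjacent to a large part of $H$, so one cannot simply write down three paths with pairwise anticomplete interiors. Overcoming this needs all connectors to be chosen inclusion-minimally, and rerouting (using minimality) when a naive attempt fails --- e.g.\ by absorbing an extra $D$-neighbour of a ``middle'' attachment vertex $h_t$ to turn a path-connector into a subdivided claw, or by passing to a smaller sub-configuration. The hypothesis that $(H,v)$ is \emph{proper}, i.e.\ that $k$ is odd and $(H,v)$ is neither a twin wheel nor a short pyramid, is precisely what leaves enough room for such a configuration --- and hence a forbidden induced subgraph --- to be found.
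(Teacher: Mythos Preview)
The paper does not give its own proof of this statement; it quotes the result from \cite{TWI} and uses it as a black box. So there is nothing in the present paper to compare your argument against.

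On the merits of your sketch: the opening reductions are correct and useful --- in particular, $V(H)\setminus D=N(v)\cap H$, every $h_i$ has a neighbour in $D$, long-sector interiors lie in $D$, and (by $C_4$-freeness with $v$) no $d\in D$ sees two non-adjacent $h_i$. The overall strategy of producing a theta (or an even wheel / $C_4$) inside $G[D\cup H\cup\{v\}]$ is also the right one. However, two of your steps are genuine gaps, not just missing details.

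First, the sentence ``the other path is routed through $D$ so as to avoid the closed neighbourhood of $Q_1^*$, which is possible because $D$ is connected'' is not a valid inference: connectedness of $D$ gives a path from $h_i$ to $h_j$ through $D$, but it gives no control whatsoever over adjacencies to $Q_1^*$; indeed $Q_1^*$ (a long-sector interior) already sits inside $D$ and may separate the relevant attachment sets. You later acknowledge this as the main obstacle, but the minimality-and-rerouting outline you give is not yet an argument --- in particular, for $k\geq 5$ there can be a single long sector, and then your scheme offers no candidate for $Q_2$ at all.

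Second, your treatment of the universal-wheel case is incorrect as stated. If some $d\in D$ is adjacent to two consecutive $h_i,h_{i+1}$, then $\{v,h_i,h_{i+1},d\}$ induces a diamond, not a $C_4$ (since $h_ih_{i+1}\in E(G)$); and you give no reason why there must exist an edge $dd'$ of $D$ with $d\in N(h_i)\setminus N(h_{i+1})$ and $d'\in N(h_{i+1})\setminus N(h_i)$. A correct argument here has to produce an even wheel or a theta rather than a $C_4$, and this again needs the minimal-connector analysis done carefully. Your $k=3$ connector idea is the right tool, but the ``path'' case of the connector (and its analogue for general $k$) is exactly where the work lies, and it is not dispatched by your sketch.
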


The majority of this paper is devoted to dealing with hubs and star cutsets arising from them in graphs in $\mathcal{C}$, but in the remainder of this section we focus on the case when $\Hub(G)=\emptyset$. To do that, we combine several earlier results from this series.

Let $k$ be a positive integer and let $G$ be a graph.
%For a pair $x,y \in V(G)$
%let $G^{xy}=G$ if $xy \not \in E(G)$, and let $G^{xy}$ be the graph obtained from $G$ be deleting the edge $xy$ (where $V(G^{xy})=V(G)$) if $xy \in E(G)$.
A \textit{$k$-block} in $G$ is a set $B$ of at least $k$ vertices in $G$ such that for every non-adjacent pair
$\{x,y\} \subseteq B$, there exists a collection $\mathcal{P}_{\{x,y\}}$ of at least $k$ distinct and pairwise internally vertex-disjoint paths in $G^{xy}$ from $x$ to $y$.
A slight strengthening of the following was proved in \cite{TWVIII}:

\begin{theorem} [Abrishami, Alecu, Chudnovsky, Hajebi, Spirkl \cite{TWVIII}]
    \label{noblocksmalltw_Ct_1}
  For all integers $k,t\geq 1$, there exists an integer $\beta=\beta(k,t)$ such that if $G \in \mathcal{C}_t$ and $G$ has   no $k$-block, then
  $\tw(G) \leq \beta(k,t)$.
\end{theorem}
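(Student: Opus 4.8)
The quickest route is to quote \cite{TWVIII}: Theorem~\ref{noblocksmalltw_Ct_1} is a weakening of the main result established there, so once a graph $G\in\mathcal{C}_t$ with no $k$-block is given, the bound $\beta(k,t)$ is exactly what that theorem delivers. I nonetheless outline the shape of a direct argument, since its ingredients --- clique-cutset decomposition, $k$-atomic tree decompositions, and the fact that high-degree vertices of a torso are highly connected --- reappear throughout this paper.

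\emph{Step 1: reduce to graphs with no clique cutset.} The treewidth of a graph equals the maximum of the treewidths of the blocks of its clique-cutset decomposition, and each such block $G[A]$ is an induced subgraph of $G$, hence lies in $\mathcal{C}_t$. Moreover a $k$-block of $G[A]$ is also a $k$-block of $G$, because the internally disjoint paths witnessing it lie inside $G[A]$ and $G[A]^{xy}$ is an induced subgraph of $G^{xy}$. So it suffices to prove the bound when $G$ has no clique cutset.

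\emph{Step 2: pass to a $k'$-atomic tree decomposition and seek a $k$-block in a large bag.} Fix $k'=k'(k)$ slightly larger than $k$ and take a $k'$-atomic tree decomposition $(T,\chi)$ of $G$ (Theorem~\ref{atomicexists}); by Theorems~\ref{atomictolean} and \ref{atomictotight} it is $k'$-lean and tight, and by Theorem~\ref{connectedbranches} its branches are connected. If $\width(T,\chi)\le\beta(k,t)$ we are done, so assume some bag $\chi(t_0)$ is very large. By Theorem~\ref{bigdegnonsep}, any two non-adjacent vertices of $\chi(t_0)$ of large degree in the torso $\hat\chi(t_0)$ are joined by $k'$ internally disjoint paths in $G$; since $\mathcal{C}$ is $C_4$-free, two non-adjacent vertices have at most one common neighbour, so for $k'$ chosen appropriately enough of these paths survive into $G^{xy}$. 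Hence, \emph{if} $\chi(t_0)$ contained $k$ vertices of large torso-degree, that set of $k$ vertices would be a $k$-block of $G$, a contradiction. So every bag contains at most $k-1$ vertices of large torso-degree.

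\emph{Step 3: the bounded-degree core, where the real difficulty lies.} After deleting from $G$ the at most $k-1$ high-torso-degree vertices of a large bag --- and applying Lemma~\ref{lem:septotd} to keep control of the effect on treewidth --- what remains behaves, relative to that bag, like a graph in $\mathcal{C}_t$ whose torsos (and hence whose internal degrees in the bag) are bounded. This is precisely where ``no $k$-block'' is, by itself, insufficient: a cubic expander has unbounded treewidth and no $4$-block, so one must exploit the structure of $\mathcal{C}$ --- chiefly theta-freeness, through obstructions of the type in Theorem~\ref{wheelstarcutset} --- to conclude that bounded-degree graphs in $\mathcal{C}_t$ have bounded treewidth, and then bootstrap. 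Organising this into a clean induction is the main obstacle, and it is exactly what \cite{TWVIII} accomplishes; accordingly, in the present paper I would simply cite that result to obtain Theorem~\ref{noblocksmalltw_Ct_1}, rather than redo the work here.
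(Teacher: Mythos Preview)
Your proposal is correct and matches the paper exactly: the paper does not prove Theorem~\ref{noblocksmalltw_Ct_1} at all, but simply cites it, introducing it with ``A slight strengthening of the following was proved in \cite{TWVIII}.'' Your opening sentence is therefore precisely what the paper does, and the remainder of your write-up is optional expository material that the paper does not include.
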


We also need the following result from \cite{TWVIII}:

\begin{theorem}  [Abrishami, Alecu, Chudnovsky, Hajebi, Spirkl \cite{TWVIII}]
  \label{bananastructure}
    For all integers $\nu, t\geq 1$, there exists an integer $\psi=\psi(t,\nu)\geq 1$ with the following property. Let $G\in \mathcal{C}_t$, let $a,b\in G$ be distinct and non-adjacent and let $\{P_i:i\in [\psi]\}$ be a collection of $\psi$ pairwise internally disjoint paths in $G$ from $a$ to $b$. For each $i\in [\nu]$, let $a_i$ be the neighbor of $a$ in $P_i$ (so $a_i\neq b$). Then there exists $I\subseteq [\phi]$ with $|I|=\nu$ for which the following holds.
    \begin{itemize}
    \item $\{a_i:i\in I\}\cup \{b\}$ is a stable set in $G$.
    \item For all $i,j\in I$ with $i<j$, $a_i$ has a neighbor in $P_j^*\setminus \{a_j\}$.
    \end{itemize}
\end{theorem}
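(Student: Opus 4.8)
The plan is to combine the hypotheses defining $\mathcal{C}_t$ with a sequence of Ramsey-type arguments; I may assume $t\geq 3$, since for $t\leq 2$ a graph in $\mathcal{C}_t$ has no edge and the statement is vacuous. First the easy reductions. Since $a$ and $b$ are non-adjacent, every $P_i$ has length at least two, so $P_i^*\neq\emptyset$; and since $G$ is theta-free, for all distinct $i,j$ the sets $P_i^*$ and $P_j^*$ are not anticomplete (otherwise $P_i\cup P_j$ is a theta with ends $a$ and $b$). The internal disjointness forces the $a_i$ to be distinct, and $a$ is complete to $\{a_i:i\in[\psi]\}$, so $G[\{a_i:i\in[\psi]\}]$ has no $K_{t-1}$ (such a clique together with $a$ would be a $K_t$); hence by Ramsey's theorem it contains a stable set whose size tends to infinity with $\psi$. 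Passing to the corresponding paths and then discarding at most one further index (a stable set contains at most one $a_i$ adjacent to $b$, since two such would give a $C_4$ through $a$ and $b$), I may assume from now on that $\{a_i:i\in[\psi']\}\cup\{b\}$ is stable, where $\psi'$ is still as large as I like.

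Next the main Ramsey step. For distinct $i,j$ I say \emph{$i$ reaches $j$} if $a_i$ has a neighbor in $P_j^*\setminus\{a_j\}$; since $\{a_i\}$ is stable this is the same as $a_i$ having any neighbor in $P_j^*$. The heart of the proof is the claim that there is a constant $r=r(t)$ such that no $r$ of the paths are \emph{pairwise non-reaching} (for every two of them, neither reaches the other). Granting this, I colour each pair $\{i,j\}\subseteq[\psi']$ by which of ``$i$ reaches $j$'', ``$j$ reaches $i$'' hold, and apply Ramsey's theorem: taking $\psi$ large enough I obtain a subset $J\subseteq[\psi']$ of size at least $\max\{r,2^{\nu}\}$ all of whose pairs receive the same colour, and by the claim this colour is not ``neither''. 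Orienting each pair of $J$ by a relation that does hold makes $J$ the vertex set of a tournament, and since every tournament on $N$ vertices has a transitive subtournament on at least $\log_2 N$ vertices, I extract $\nu$ of the paths together with a linear order in which each path reaches every later one. Relabelling these $\nu$ paths along this order gives $I$ and a relabelling of the $a_i$ for which both conclusions hold — the first by construction, the second by the definition of reaching.

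Everything thus reduces to the claim, and this is the step I expect to be the main obstacle; it is also the only place where theta-freeness alone is not enough. The approach would be to fix pairwise non-reaching paths $P_1,\dots,P_r$ and, for each pair, choose the edge $u_{ij}w_{ij}$ between $P_i^*$ and $P_j^*$ minimizing $\mathrm{dist}_{P_i}(a,u_{ij})+\mathrm{dist}_{P_j}(a,w_{ij})$; non-reaching forces $u_{ij}\neq a_i$ and $w_{ij}\neq a_j$. One then studies how these $\binom{r}{2}$ bridges sit along the paths. When many bridges share an endpoint, or land near one another on a common path, $C_4$-freeness forces the relevant attachment vertices to be pairwise adjacent, which together with a further bridge endpoint yields a clique too large for $G$ to have no $K_t$ (already $r$ pairwise non-reaching length-three paths whose bridges all attach at the same internal vertices force a $K_{r}$ there, so $r<t$ in that case); when the bridges are instead spread out, one stitches subpaths of the $P_i$ together with the bridges to produce a prism, a new theta, or a proper even wheel, the last being excluded with the help of Theorem~\ref{wheelstarcutset}. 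I expect the genuinely hard part to be exactly this last step — ruling out the ``spread out'' configurations for $r$ of order $t$ — and the rest of the proof to be routine Ramsey bookkeeping.
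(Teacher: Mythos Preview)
This theorem is not proved in the present paper: it is quoted from \cite{TWVIII} and used as a black box, so there is no proof here against which to compare your attempt. I can only assess the sketch on its own merits.

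Your initial reductions --- Ramsey on the $a_i$ to get a stable set, and $C_4$-freeness to discard the at most one $a_i$ adjacent to $b$ --- are correct. The four-colour Ramsey step on the reaching relation is also sound, though the tournament extraction is superfluous: once you have a monochromatic set in one of the three ``reaching'' colours, that set with its natural (or reversed) order on the indices already satisfies the conclusion, so a monochromatic set of size $\nu$ suffices and the detour through $2^{\nu}$ is unnecessary.

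There is, however, an actual error in your first paragraph: two internally disjoint paths $P_i,P_j$ from $a$ to $b$ with anticomplete interiors form a hole, not a theta, so theta-freeness does \emph{not} force every pair $P_i^*,P_j^*$ to share an edge. What theta-freeness gives is that no \emph{three} interiors are pairwise anticomplete; the auxiliary graph on $[\psi']$ whose edges record anticomplete pairs is therefore triangle-free, and one may pass to an independent set in it of size roughly $\sqrt{\psi'}$ to recover the property you want. This is repairable, but the justification you give is wrong, and the bridge vertices $u_{ij},w_{ij}$ that your later sketch relies on are not guaranteed to exist without this fix.

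The substantive gap is exactly the one you flag yourself: the claim that a family of pairwise non-reaching paths has size bounded in terms of $t$. Your sketch --- choose minimal bridges, argue that clustered bridges force a large clique while spread bridges force a theta, prism, or even wheel --- is a plan, not a proof. The ``spread out'' case in particular needs an explicit construction of the forbidden subgraph, with careful control of how the $u_{ij}$ interleave along each $P_i$ and of which extra edges between the $P_i^*$ might be present; and Theorem~\ref{wheelstarcutset} is about components of $G\setminus N[v]$, not about local configurations inside a union of paths, so invoking it here is a red herring. As written, your proposal reduces the theorem to a lemma you do not prove, and that lemma is where essentially all of the content lies.
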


Additionally, we use a result of \cite{TWIV}:

\begin{theorem}[Abrishami, Alecu, Chudnovsky, Hajebi, Spirkl \cite{TWIV}]
\label{lemma:common_nbrs}
Let $G$ be a (theta,  even wheel)-free graph, let $H$ be a hole of $G$, and let $v_1, v_2 \in V(G) \setminus V(H)$ be adjacent vertices each with at least two non-adjacent neighbors in $H$. Then, $v_1$ and $v_2$ have a common neighbor in $H$. 
\end{theorem}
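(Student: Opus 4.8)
The plan is to argue by contradiction: suppose $N(v_1)\cap N(v_2)\cap V(H)=\emptyset$, and write $N_i=N(v_i)\cap V(H)$ for $i\in\{1,2\}$. The first move is to upgrade the hypothesis ``each $v_i$ has two non-adjacent neighbours in $H$'' to the statement that each $(H,v_i)$ is a wheel with an \emph{odd} number of neighbours in $H$. Indeed, if $v_i$ had exactly two neighbours $p,q$ on $H$, then $p\not\sim q$, and the two arcs of $H$ joining $p$ to $q$ together with the path $p\dd v_i\dd q$ would form a theta, which is impossible. Hence $|N_i|\ge 3$, so $(H,v_i)$ is a wheel, and since $G$ is even-wheel-free, $|N_i|$ is odd; in particular $|N_2|$ is odd. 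This is the parity I will eventually contradict.

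Next I would examine the sectors of the wheel $(H,v_1)$. Their interiors partition $V(H)\setminus N_1$, and since $N_1\cap N_2=\emptyset$ we have $N_2\subseteq V(H)\setminus N_1$; thus $|N_2|=\sum_S|N_2\cap S^*|$, where $S$ ranges over the sectors of $(H,v_1)$. The crux is to show each summand is even. Fix a sector $S$ of $(H,v_1)$, with ends $a,b$, such that $N_2\cap S^*\neq\emptyset$; then $S$ is a long sector, so $a\not\sim b$, and since $v_1$ is anticomplete to $S^*$ the set $\{v_1\}\cup V(S)$ induces a hole $H'$. Moreover $v_2\notin V(H')$, and because $v_1v_2\in E(G)$ while $a,b\in N_1$ are not in $N_2$, we get $N(v_2)\cap V(H')=\{v_1\}\cup(N_2\cap S^*)$. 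I claim $|N_2\cap S^*|$ cannot be odd. If $|N_2\cap S^*|=1$, say $N_2\cap S^*=\{c\}$, then the subpath of $S$ from $c$ to $a$ completed by the edge $av_1$, the subpath of $S$ from $c$ to $b$ completed by $bv_1$, and the path $c\dd v_2\dd v_1$ form a theta with ends $c$ and $v_1$ --- the point being that $c$ is the unique neighbour of $v_2$ in $V(S)$, so $v_2$ is anticomplete to the interiors of the first two paths. If $|N_2\cap S^*|$ is odd and at least $3$, then $(H',v_2)$ is a wheel with $1+|N_2\cap S^*|$ neighbours on $H'$, an even number, hence an even wheel. Either way we reach a contradiction, so $|N_2\cap S^*|$ is even for every sector $S$ meeting $N_2$ in its interior (and it is $0$, hence even, for every other sector). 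Summing over all sectors, $|N_2|$ is even, contradicting the oddness established in the first paragraph. This completes the argument.

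The step I expect to be the real obstacle is spotting the right auxiliary object. The naive attempt --- taking two non-adjacent neighbours of $v_2$ on $H$ and trying to make a theta from the edge $v_1v_2$ and the two arcs of $H$ between them --- fails precisely because $v_1$ and $v_2$ may be genuine wheel centres, so $v_1$ (or $v_2$) need not be anticomplete to the interiors of those arcs. Replacing $H$ by the hole $H'$ through $v_1$ and a single sector of $(H,v_1)$ is what repairs this: it hides all of $v_1$'s surplus neighbours inside $H'$ while enlarging $N(v_2)$ by exactly one vertex, namely $v_1$, and it is this single parity flip, applied sector by sector, that forces the contradiction. Everything else --- verifying that $H'$ is an induced cycle on at least four vertices, and the neighbourhood bookkeeping for $v_2$ on $H'$ --- should be routine.
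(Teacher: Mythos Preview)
The paper does not actually prove this statement: it is quoted as a result of \cite{TWIV} and used as a black box in the proof of Theorem~\ref{banana}. So there is no ``paper's own proof'' to compare against.

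Your argument, however, is correct and self-contained. The upgrade from ``two non-adjacent neighbours'' to ``odd wheel'' is clean (the theta in the two-neighbour case is genuine because $v_i$ is anticomplete to both arcs and the arcs are anticomplete since $H$ is induced). The key construction --- replacing $H$ by the hole $H'=\{v_1\}\cup V(S)$ for each long sector $S$ of $(H,v_1)$ --- works exactly as you describe: $H'$ is a hole because $|N_1|\ge 3$ forces $a\not\sim b$, and the computation $N(v_2)\cap V(H')=\{v_1\}\cup(N_2\cap S^*)$ uses only $N_1\cap N_2=\emptyset$. The dichotomy on $|N_2\cap S^*|$ odd (one gives a theta with ends $c,v_1$; at least three gives an even wheel $(H',v_2)$) is sound, and summing the even sector-contributions contradicts $|N_2|$ odd. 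One small point worth making explicit in a write-up: in the theta case, the anticompleteness of the interiors of the two $S$-paths holds because all edges of $G$ among vertices of $S$ are edges of $S$ itself (as $H$ is induced and $|N_1|\ge 3$ prevents the wrap-around edge from lying inside $V(S)$).
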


We also remind the reader the following well-known version of Ramsey's theorem
\cite{setRamsey}:

\begin{theorem} \label{smallanti}
For all positive integers $a,b,c$ , there is a positive integer $M = M(a,b,c)$ such
that if $G$ is a graph with no $K_a$ and no induced subgraph isomorphic to
$K_{b,b}$, and $G$
contains a collection $\mathcal{M}$ of $M$  pairwise disjoint subsets of vertices, each of
size at most $c$,  then some two members of $\mathcal{M}$ are anticomplete
to each other.
\end{theorem}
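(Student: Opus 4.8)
The plan is to deduce this from the classical multicolour Ramsey theorem, applied to an edge-colouring of the complete graph on the index set of $\mathcal{M}$. I would argue by contradiction: suppose no two members of $\mathcal{M}$ are anticomplete, so there is at least one edge of $G$ between any two of them, and aim to produce either a $K_a$ or an induced $K_{b,b}$ in $G$.

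First, by pigeonhole (at the cost of a factor at most $c$ in $M$) I would pass to a subfamily of members all of the same size $s\le c$, and fix for each such member $X_i$ an arbitrary enumeration $X_i=\{x_i^1,\ldots,x_i^s\}$. For each pair $i<j$ of the (reindexed) members I would record the bipartite adjacency pattern $f_{ij}\colon\{1,\ldots,s\}^2\to\{0,1\}$ with $f_{ij}(p,q)=1$ iff $x_i^px_j^q\in E(G)$; this is a colouring of the pairs by at most $2^{c^2}$ colours. Taking $M$ large enough (a suitable $2^{c^2}$-colour Ramsey number times $c$), I would obtain a monochromatic index set $I$ with $|I|=N:=\max(a,2b)$ and a single pattern $f$ with $f_{ij}=f$ for all $i<j$ in $I$. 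Since members of $I$ are pairwise non-anticomplete, $f$ is not identically zero, so I may fix $(p_0,q_0)$ with $f(p_0,q_0)=1$.

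Now list $I=\{i_1<\dots<i_N\}$ and set $a_k=x_{i_k}^{p_0}$, $b_k=x_{i_k}^{q_0}$. The key point is that $a_kb_l\in E(G)$ whenever $k<l$ (this is $f(p_0,q_0)=1$ read off the pair $\{i_k,i_l\}$). If $p_0=q_0$ then $\{a_1,\ldots,a_N\}$ is a clique of size $N\ge a$, a contradiction. If $p_0\neq q_0$ then the $a_k$'s and $b_l$'s are pairwise distinct, and I would split on the diagonal values of $f$: if $f(p_0,p_0)=1$ then $\{a_1,\ldots,a_N\}$ is a clique of size $N\ge a$, and similarly if $f(q_0,q_0)=1$ using the $b_k$'s; otherwise both $A=\{a_1,\ldots,a_N\}$ and $B=\{b_1,\ldots,b_N\}$ are stable sets, and then $A'=\{a_1,\ldots,a_b\}$ together with $B'=\{b_{b+1},\ldots,b_{2b}\}$ are disjoint stable sets that are complete to each other by the key point, so $G[A'\cup B']\cong K_{b,b}$, a contradiction.

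The step that needs the most care — and is the only non-routine part — is ensuring that the copy of $K_{b,b}$ we extract is \emph{induced}: the common pattern $f$ only provides a ``half-graph'' between $A$ and $B$ (adjacency is forced for $k<l$ but uncontrolled when $k>l$ or $k=l$), so one must pit a prefix of the $a_k$'s against a disjoint suffix of the $b_l$'s to make it complete bipartite, and one must first separate off the two cases in which $A$ or $B$ is in fact a clique rather than a stable set. Everything else is routine bookkeeping, and the bound obtained is $M(a,b,c)=c\cdot R_{2^{c^2}}(\max(a,2b))$, where $R_r(\cdot)$ denotes the $r$-colour Ramsey number for cliques.
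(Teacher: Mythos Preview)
Your argument is correct. The case analysis is clean: once Ramsey hands you a common pattern $f$ on an index set of size $\max(a,2b)$, checking the diagonal entries $f(p_0,p_0)$ and $f(q_0,q_0)$ is exactly the right move to separate the clique outcomes from the induced-$K_{b,b}$ outcome, and your prefix/suffix trick $A'=\{a_1,\ldots,a_b\}$, $B'=\{b_{b+1},\ldots,b_{2b}\}$ correctly exploits the half-graph structure to force completeness while keeping both sides stable and disjoint.

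There is nothing to compare against in the paper itself: the statement is quoted as a well-known version of Ramsey's theorem and attributed to a private communication (reference~\cite{setRamsey}), with no proof supplied. Your self-contained derivation via multicolour Ramsey on the $2^{c^2}$ bipartite patterns is a standard and perfectly acceptable route; the bound $M(a,b,c)=c\cdot R_{2^{c^2}}(\max(a,2b))$ you obtain is of the expected shape.
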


Next we show:

\begin{theorem}
  \label{banana}
  For every  integer $t \geq 1$ there exists an integer $k=k(t)$ such that
    if $G \in \mathcal{C}_t$ and $x,y \in V(G)$ are non-adjacent and
    $N(x) \cap \Hub(G)=\emptyset$, then there do not exist
    $k$ pairwise internally vertex-disjoint paths in $G$ from $x$ to $y$.
  \end{theorem}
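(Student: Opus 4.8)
The plan is to argue by contradiction: assume there are $k$ pairwise internally vertex-disjoint paths $P_1,\dots,P_k$ from $x$ to $y$, where $k=k(t)$ is a large constant to be specified, and derive the existence of a hub adjacent to $x$. First I would apply Theorem \ref{bananastructure} with an appropriate choice of $\nu$ (say $\nu$ large compared to the Ramsey-type bound $M$ from Theorem \ref{smallanti}, with parameters depending on $t$) to pass to a sub-collection $\{P_i : i \in I\}$ with $|I| = \nu$ such that, writing $x_i$ for the neighbor of $x$ in $P_i$, the set $\{x_i : i \in I\} \cup \{y\}$ is stable, and for all $i<j$ in $I$ the vertex $x_i$ has a neighbor in $P_j^* \setminus \{x_j\}$. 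This "staircase" structure is exactly what lets us build holes through $x$: for any $i<j$, concatenating the sub-path of $P_i$ from $x_i$ down to its first attachment on $P_j$, then along $P_j$, and closing through $x$, produces a hole $H_{ij}$ containing $x_i$ and $x_j$ together with $x$ adjacent to both but not (a priori) to the interior.

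The heart of the argument is to show that, after controlling things with Ramsey, $x$ has \emph{three} neighbors on one of these holes, none of which are pairwise consecutive, so that the wheel $(H,x)$ is proper — it is neither a twin wheel (whose center-neighborhood is a path of length two) nor a short pyramid (three neighbors, exactly two of them adjacent), and it cannot be a universal wheel since $H$ is long. Concretely: the $x_i$'s are pairwise non-adjacent, so if $x$ had exactly the three neighbors $x_i, x_j, x_\ell$ on some hole built from $P_i, P_j, P_\ell$ they would be pairwise non-adjacent, giving a proper wheel immediately; the only obstruction is that $x$ might have \emph{additional} neighbors on these paths. So I would first clean up: using Ramsey (Theorem \ref{smallanti}) on the sets $V(P_i^*)$ — which have no $K_t$ and no $K_{b,b}$ for suitable $b$ since $G \in \mathcal{C}_t$ and $G$ is $C_4$-free — restrict to a large sub-collection that is "mutually almost-anticomplete" except for the prescribed staircase neighbors, and similarly bound how many $P_i$ can meet $N(x)$ beyond $x_i$. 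With three paths $P_i, P_j, P_\ell$ ($i<j<\ell$) left whose interiors (minus the $x$-neighbors) are anticomplete to each other and to $N(x)$, I get a clean hole $H$ through $x_i, x, x_\ell$ and the staircase arc, and then route the third neighbor $x_j$ in: $x_j$ reaches $x_\ell$ via a neighbor in $P_\ell^*$, giving a wheel $(H', x)$ with the three pairwise-non-adjacent neighbors $x_i, x_j, x_\ell$, hence proper. Thus $x \in \Hub(G)$ is a neighbor of... wait — $x_i$ is the neighbor of $x$, and the hub is $x$ itself, but $x \in N(x_i)$, so this contradicts $N(x) \cap \Hub(G) = \emptyset$ only if we instead center the wheel at some $x_i$; I would therefore arrange the construction so the detected proper wheel is centered at one of the $x_i$ (equivalently, apply the symmetric version of the staircase from the $x$-side), making $x_i \in \Hub(G) \cap N(x)$, the desired contradiction.

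I expect the main obstacle to be exactly this bookkeeping of \emph{which} vertex ends up being the hub and ensuring it is a neighbor of $x$: Theorem \ref{bananastructure} is stated asymmetrically (attachments go "upward" from $x_i$ into $P_j$), so to force the wheel center to lie in $N(x)$ one must be careful about orientation, possibly applying the theorem to a modified collection of paths (e.g. rerouting the initial segments so that the relevant "apex" of each discovered hole is the common neighbor of $x$). A secondary technical point is verifying that the holes built really are induced — this is where $C_4$-freeness and the anticompleteness obtained from Theorem \ref{smallanti} do the work, since a chord of the putative hole would have to lie between interiors of distinct cleaned-up paths, which we have arranged to be anticomplete. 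Finally, one uses Theorem \ref{lemma:common_nbrs} to rule out the degenerate configurations where two adjacent would-be centers force a common neighbor that collapses the wheel, and combines everything to conclude that $k(t)$ can be taken to be, say, a tower of the Ramsey bound $M$ composed with $\psi(t,\cdot)$ from Theorem \ref{bananastructure}.
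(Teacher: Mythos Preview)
Your proposal correctly identifies the toolkit (Theorems~\ref{bananastructure}, \ref{smallanti}, \ref{lemma:common_nbrs}) and you catch your own slip about which vertex must be the hub. But the repaired plan --- ``arrange the construction so the detected proper wheel is centered at one of the $x_i$'' --- is precisely where the difficulty lies, and the sketch does not resolve it. With three indices $i<j<\ell$, a hole built from $x$, $x_i$, and a piece of $P_\ell$ gives $x_j$ at most two obvious neighbors on it ($x$ itself and a single staircase attachment in $P_\ell^*$), which is one short of a wheel; nothing in the setup forces a third. Relatedly, your use of Theorem~\ref{smallanti} on the full sets $V(P_i^*)$ cannot yield ``mutually almost-anticomplete'' path interiors: the theorem only returns a single anticomplete pair, and the staircase structure from Theorem~\ref{bananastructure} forces edges between these interiors anyway.

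The paper's argument is structurally different. It fixes three neighbors $a_1,a_2,a_3$ of $x$ and uses the remaining (many) paths $P_4,\dots$ not to build one wheel but to produce, for each $i\geq 4$, a hole $H_i$ through $a_1,x,a_3$ and a minimal subpath $Q_i\subseteq P_i$. The hypothesis $a_2\notin\Hub(G)$ is then used \emph{negatively}: since $(H_i,a_2)$ is neither a proper wheel nor a theta, $a_2$ must have exactly two adjacent neighbors $x_i,y_i$ in each $Q_i$. This yields many short paths from $a_2$ to $a_1$ (through the $x_i$) and from $a_2$ to $a_3$ (through the $y_i$), to which Theorem~\ref{bananastructure} is applied \emph{twice more}; Theorem~\ref{smallanti} is then applied only to the size-two sets $\{x_i,y_i\}$ to find an anticomplete pair; and Theorem~\ref{lemma:common_nbrs} pins down how $x_i,y_i$ attach to $Q_j$. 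The final contradiction is a \emph{prism}, not a proper wheel. The idea missing from your sketch is this inversion: rather than trying to exhibit a hub in $N(x)$, use its absence to force a rigid triangular attachment pattern of $a_2$ across many holes, and then extract a prism from that pattern.
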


  \begin{proof}
    Let $M=M(t,2,2)$ be as in Theorem~\ref{smallanti}
    and let $\nu=\psi(\psi(M,t),t)$ be as in Theorem~\ref{bananastructure}.
    Let $k=\psi(6\nu+3,t)$ be as in Theorem~\ref{bananastructure}, and suppose that
    there are $k$ vertex disjoint paths from $x$ to $y$ in $G$.
    Let $I$ be as in the conclusion of Theorem~\ref{bananastructure}
    applied with $a=x$ and $b=y$; renumbering the paths if necessary we
    may assume that $I=\{1, \dots, 6\nu+3\}$.
    Then each of $a_1,a_2,a_3$ has a neighbor in each of the paths
    $P_4, \dots, P_{6\nu+3}$. 
        For $i \geq 4$, let $Q_i$ be a minimal subpath of $P_i$
        such that all of $a_1,a_2,a_3$ have neighbors in $Q_i$; let $l_i$ and
        $r_i$ be the ends of $Q_i$. By the minimality of $Q_i$, for every
        $i$ there exist distinct
        $s, r \in \{a_1, a_2,a_3\}$ such that $l_i$ is the unique neighbor
        of $s$ in $Q_i$, and $r_i$ is the unique neighbor of $r$ in $Q_i$.
        By permuting $a_1$, $a_2$  and $a_3$ if necessary, we may assume that
there exists $J \subseteq \{4, \dots, 6 \nu +3\}$ with $|J|=\nu$ such that for every $i \in J$,
        $l_i$ is the unique neighbor of $a_1$ in $Q_i$,
and $r_i$ is the unique neighbor of $a_3$ in $Q_i$. Renumbering $a_4, \dots a_{6\nu+3}$ we write $J=\{4, \dots, \nu +3\}$.
For every $i \in J$ let $H_i$ be the hole
$a_1 \dd l_i \dd Q_i \dd r_i \dd a_3 \dd x \dd a_1$. Since $(H_i,a_2)$ is not
a proper wheel (recall  that  $N(x) \cap \Hub(G)=\emptyset$), and  since $H_i \cup a_2$
is not a theta, it follows that $a_2$ has exactly two neighbors $x_i,y_i$ in
$Q_i$ and they are adjacent. We may assume that $Q_i$ traverses $l_i,x_i,y_i,r_i$ in this order.

We now apply Theorem~\ref{bananastructure} again to the
paths $a_1 \dd l_i \dd Q_i \dd x_i \dd a_2$  (with $i \in \{4, \dots, \nu+3\}$)
from $a_1$ to $a_2$ with $a=a_2$ and
$b=a_1$. Let $K$ be the
set  of indices as in the conclusion of the theorem, so $|K|=\psi(M,t)$.
Now apply  Theorem~\ref{bananastructure} for the third time, to the 
paths $a_3 \dd r_i \dd Q_i \dd y_i \dd a_2$  (with $i \in K$) from $a_2$ to $a_3$ with $a=a_2$ and $b=a_3$. Let $L$ be the set of indices as in the conclusion of the theorem. Since $G \in \mathcal{C}_t$, applying Theorem~\ref{smallanti}
to the collections $\mathcal{M}=\{\{x_i,y_i\} \; : \; i \in L\}$ we deduce
that  there exist two values $i<j \in L$
such that
\begin{itemize}
\item $\{x_i,y_i\}$ is anticomplete to $\{x_j,y_j\}$, and 
\item $x_i$ has a neighbor in
  $l_j \dd  Q_j \dd x_j$, and
\item $y_i$ has a neighbor in
  $y_j \dd Q_j \dd r_j$.
\end{itemize}
We claim that $x_i$ has exactly two neighbors in $Q_j$ and they are adjacent.
Suppose first that $x_i$ has a unique neighbor $u$ in $Q_j$.
Then $u \in l_j \dd Q_j \dd x_j$, and
the hole $l_j \dd Q_j \dd x_j \dd a_2 \dd x \dd a_1 \dd l_j$ together
with $x_i$ is a theta with ends $a_2,u$, a contradiction. Next suppose that
$x_i$ has two neighbors in $Q_j$ that are non-adjacent to each other. Then by Theorem~\ref{lemma:common_nbrs} applied to $H_j$, $a_2$ and $x_i$, it follows that
$a_2$ and $x_i$ have a common neighbor in $H_j$, and therefore $x_i$
is adjacent to one of $x_j,y_j$, a contradiction. We deduce that
$x_i$ has exactly two neighbors $p_j,q_j$ in $Q_j$ and they are adjacent. Since
$x_i$ is non-adjacent to $x_j$ and has a neighbor in $l_j \dd Q_j \dd x_j$,
it follows that $p_j, q_j \in l_j \dd Q_j \dd x_j$.
Similarly $y_i$ has exactly  two neighbors $s_j,t_j $ in $Q_j$,
$s_j$ is adjacent to $t_j$, and $s_j,t_j \in y_j \dd Q_j \dd r_j$.
We may assume that $Q_j$ traverses $l_j,p_j,q_j,x_j,y_j,s_j,t_j,r_j$ in
this order (possibly $l_j=p_j$ or $t_j=r_j$). But now we get a prism with triangles $x_ip_jq_j$ and $y_it_js_j$
and paths $x_i \dd y_i$, $q_j \dd Q_j \dd s_j$ and $p_j \dd Q_j \dd l_j \dd a_1 \dd x \dd a_3 \dd r_j \dd Q_j \dd t_j$, a contradiction. This proves Theorem~\ref{banana}.
  \end{proof}
    
  From Theorems \ref{noblocksmalltw_Ct_1} and \ref{banana} we deduce:

  \begin{theorem}
  \label{noblocksmalltw_Ct}
  For every  integer $t$, there exists an integer $\gamma=\gamma(t)$ such that every $G \in \mathcal{C}_t$ with $\Hub(G)=\emptyset$ satisfies $\tw(G) \leq \gamma$.
\end{theorem}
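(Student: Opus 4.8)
The goal is Theorem~\ref{noblocksmalltw_Ct}: every $G \in \mathcal{C}_t$ with $\Hub(G) = \emptyset$ has bounded treewidth. The plan is to combine Theorem~\ref{banana} (the ``banana'' bound) with Theorem~\ref{noblocksmalltw_Ct_1} (no $k$-block implies bounded treewidth).

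\smallskip

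Here is the argument I would write. Let $t \geq 1$ be given. Let $k = k(t)$ be the constant produced by Theorem~\ref{banana}, and set $\gamma(t) = \beta(k, t)$, where $\beta$ is the function from Theorem~\ref{noblocksmalltw_Ct_1}. Now let $G \in \mathcal{C}_t$ with $\Hub(G) = \emptyset$. I claim $G$ has no $k$-block. Suppose for contradiction that $B$ is a $k$-block of $G$. Since $|B| \geq k \geq 2$ and $k$-blocks only impose conditions through non-adjacent pairs, I first need to locate a non-adjacent pair $\{x, y\} \subseteq B$; this is where one must be slightly careful, since a priori $B$ could be a clique. However, if $B$ were a clique of size $\geq k$, then $B$ would be a clique of size $\geq k \geq t$ in $G$ (choosing $k \geq t$, which we may by increasing $k$ if necessary), contradicting $G \in \mathcal{C}_t$. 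So fix non-adjacent $x, y \in B$. By definition of $k$-block, there is a collection $\mathcal{P}_{\{x,y\}}$ of at least $k$ pairwise internally vertex-disjoint paths in $G^{xy}$ from $x$ to $y$. Since $x, y$ are non-adjacent, $G^{xy} = G$ (the paths may be taken to avoid the edge $xy$, which does not exist here), so these are $k$ pairwise internally vertex-disjoint paths in $G$ from $x$ to $y$. Since $\Hub(G) = \emptyset$, certainly $N(x) \cap \Hub(G) = \emptyset$, so Theorem~\ref{banana} applies and tells us no such collection of $k$ paths exists --- a contradiction. Hence $G$ has no $k$-block, and Theorem~\ref{noblocksmalltw_Ct_1} gives $\tw(G) \leq \beta(k, t) = \gamma(t)$.

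\smallskip

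The one subtlety worth flagging --- and the only place where the proof is not completely mechanical --- is the convention around $G^{xy}$ and the clique case. The paper uses the notation $G^{xy}$ (appearing in the definition of $k$-block) apparently to mean $G$ with the edge $xy$ deleted (or added), so for non-adjacent $x,y$ one wants $G^{xy} = G$; this should be checked against the definition in \cite{TWVIII}, but since Theorem~\ref{banana} is itself stated in terms of paths ``in $G$'', and $x,y$ are non-adjacent, the two formulations coincide. The clique-case subtlety is handled simply by taking $k(t) \geq t$ in the statement of Theorem~\ref{banana} (or by observing directly that a clique $k$-block with $k \geq t$ contradicts membership in $\mathcal{C}_t$). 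I do not expect any genuine obstacle here: all the real work has been done in Theorem~\ref{banana}, whose proof is the substantive content (a delicate argument ruling out thetas, prisms, and proper wheels among bundles of paths), and Theorem~\ref{noblocksmalltw_Ct_1} is imported wholesale. So Theorem~\ref{noblocksmalltw_Ct} is genuinely a short corollary, and the proof is essentially the two-line deduction above.
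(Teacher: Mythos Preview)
Your proof is correct and matches the paper's approach exactly: the paper simply writes ``From Theorems~\ref{noblocksmalltw_Ct_1} and~\ref{banana} we deduce'' without further detail, and you have supplied precisely the intended two-line deduction. Your handling of the clique case (by taking $k \geq t$) and of the $G^{xy}$ convention is fine; since the $k$-block condition is only imposed on non-adjacent pairs, the paths live in $G$ regardless of the convention.
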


\section{Stable   sets  of safe hubs} \label{sec:centralbag}

Let $c \in [0, 1]$. A set $X \subseteq V(G)$ is a {\em $c$-balanced separator} if
$|D| \leq  c|V(G)|$ for every component $D$ of $G \setminus X$. The set $X$ is a {\em balanced separator} if $X$ is a $\frac{1}{2}$-balanced separator.

Let $t,d$ be integers. We make the following assumptions throughout this section.
Let $G \in \mathcal{C}_t$ with $|V(G)|=n$. 
Let $m=k+2d$ where $k = k(t)$ is as in Theorem \ref{banana},
and assume that $G$ does not have a  balanced separator 
of size less than $m$. Let $(T,\chi)$ be an $m$-atomic
tree decomposition of $G$. We say that
    a vertex $v$ is {\em $d$-safe} if $|N(v) \cap \Hub(G)| \leq d$.

By Theorems \ref{atomictolean} and \ref{atomictotight}, we have that $(T,\chi)$ is  tight and $m$-lean.
By Theorem \ref{center}, there exists $t_0 \in T$ such that $t_0$ is a center for $T$.
A vertex $v \in V(G)$ is {\em $t_0$-cooperative}
if either $v \not\in \chi(t_0)$, or  $\deg_{\hat\chi(t_0)}(v) < 2m(m-1)$. 
We show that  $t_0$-cooperative vertices have the
following important property.

    \begin{lemma}
      \label{noncoop}
      If $u, v \in \chi(t_0)$ are $d$-safe and not $t_0$-cooperative, then $u$ is adjacent to $v$.
    \end{lemma}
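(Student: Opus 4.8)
The plan is to assume, for contradiction, that $u,v\in\chi(t_0)$ are $d$-safe and not $t_0$-cooperative but $uv\notin E(G)$, and to produce enough internally vertex-disjoint $u$--$v$ paths in a suitable induced subgraph of $G$ to contradict Theorem~\ref{banana}.

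First I would invoke the ``large torso-degree forces many paths'' machinery. Since $u$ and $v$ are not $t_0$-cooperative, both lie in $\chi(t_0)$ and have degree at least $2m(m-1)$ in $\hat\chi(t_0)$; as $2m(m-1)\ge(2m-2)(m-2)$ for every $m\ge1$, Theorem~\ref{bigdegnonsep} (applied to the $m$-atomic decomposition $(T,\chi)$ at $t_0$; we may assume $m\ge3$ here, since $k(t)$ in Theorem~\ref{banana} may be taken arbitrarily large) yields $m=k+2d$ pairwise internally vertex-disjoint paths $P_1,\dots,P_m$ in $G$ from $u$ to $v$. For each $i$ let $u_i$ be the neighbor of $u$ on $P_i$; the $u_i$ are pairwise distinct and lie in $N(u)$, so since $u$ is $d$-safe at most $d$ of them are in $\Hub(G)$. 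Discarding the at most $d$ paths with $u_i\in\Hub(G)$ and renumbering, I am left with at least $m-d=k+d\ge k$ paths $P_1,\dots,P_k$ with $u_i\notin\Hub(G)$ for every $i$.

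Next I would pass to an induced subgraph that cleanly satisfies the hypotheses of Theorem~\ref{banana}. Set $W=N(u)\cap\Hub(G)$, so $|W|\le d$ (as $u$ is $d$-safe) and $W\cap\{u_1,\dots,u_k\}=\emptyset$. Let $G'=G\setminus W$. Each $P_i$ (for $i\le k$) survives in $G'$: the only vertices of the induced path $P_i$ lying in $N[u]$ are $u$ and $u_i$, and $W\subseteq N(u)\setminus\{u\}$ contains neither, while $v\notin N(u)$ since $uv\notin E(G)$; hence $V(P_i)\cap W=\emptyset$, and in particular $u,v\in V(G')$. Thus $G'$ contains $k$ pairwise internally vertex-disjoint $u$--$v$ paths. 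Moreover $G'\in\mathcal{C}_t$ since $\mathcal{C}_t$ is hereditary, and $\Hub(G')\subseteq\Hub(G)$: every hole of the induced subgraph $G'$ is a hole of $G$, with the same adjacencies to any fixed vertex of $V(G')$, so properness of wheels is preserved. Therefore $N_{G'}(u)\cap\Hub(G')\subseteq(N(u)\setminus W)\cap\Hub(G)=\emptyset$. Now Theorem~\ref{banana} applied to $G'$, $u$, $v$ says that $G'$ has \emph{no} $k=k(t)$ pairwise internally vertex-disjoint $u$--$v$ paths, contradicting the previous sentence; this proves the lemma.

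The only delicate point — not hard, but essential for the reduction to be valid — is the passage to $G'$: one must delete exactly the hub-neighbors of $u$ that lie off the chosen paths, verify that this leaves the chosen paths untouched, and observe that deleting vertices only shrinks the hub set, so that in $G'$ the vertex $u$ has no hub-neighbor at all. The remaining ingredients (the inequality $2m(m-1)\ge(2m-2)(m-2)$, and the count $m-d\ge k$) are routine bookkeeping.
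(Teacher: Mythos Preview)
Your proof is correct and follows essentially the same approach as the paper: use Theorem~\ref{bigdegnonsep} to get $m$ internally disjoint $u$--$v$ paths, delete a small set of hub-neighbors, and invoke Theorem~\ref{banana} in the resulting induced subgraph. The only (cosmetic) difference is that the paper deletes $Y=(N(u)\cup N(v))\cap\Hub(G)$ of size at most $2d$ in one step (destroying at most $2d$ paths), whereas you delete only $W=N(u)\cap\Hub(G)$ and argue more explicitly that the pre-filtered paths survive; your version uses only the $d$-safeness of $u$, which is a harmless minor simplification, and your preliminary filtering of paths with $u_i\in\Hub(G)$ is redundant (removing $W$ already kills exactly those paths) but not incorrect.
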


    \begin{proof}
      Suppose that $u$ and $v$ are non-adjacent.
      Since $u,v$ are not $t_0$-cooperative, it follows that both $u,v$ have degree at least $2m(m-1)$ in $\hat\chi(t_0)$.
      Since $(T, \chi)$ is an $m$-atomic tree decomposition, Theorem \ref{bigdegnonsep} implies that 
there are $m$ pairwise internally disjoint paths in $G$ from $u$ to $v$.
Let $Y=(N(u) \cup N(v)) \cap \Hub(G)$ and let $G'=G \setminus Y$. Then $|Y| \leq 2d$, and so in $G'$ there are $k$  pairwise internally vertex disjoint paths from $u$ to $v$. But $N_{G'}(u) \cap \Hub(G')=N_{G'}(v) \cap \Hub(G')=\emptyset$, contrary
to Theorem \ref{banana}. This proves Lemma \ref{noncoop}.
\end{proof}

In the theory of tree decompositions, working with  the torso of a bag is a natural way to focus on the bag while still keeping track of its relation to the rest of the graph. Unfortunately,  taking torsos is not an operation closed under induced subgraphs, and so this method does not work when general hereditary classes are considered.  The goal of the next theorem is to design a tool that will allow us to construct a safe alternative to torsos.
   
  \begin{theorem}
    \label{connectors}
Assume that $G$ does not admit a clique cutset. For every $t_1 \in N_T(t_0)$,
        there exists an induced subgraph
         $Conn(t_1)$ of $G_{t_0 \rightarrow t_1}$ such that
        \begin{enumerate}
        \item  We have  $\chi(t_1) \cap \chi(t_0)  \subseteq Conn(t_1)$.
          \label{C-1}
                \item $Conn(t_1) \setminus \chi(t_0)$ is connected and
                  $N(Conn(t_1) \setminus \chi(t_0))=\chi(t_0) \cap \chi(t_1)$.
\label{C-2}
                  %  \item $Conn(t) \setminus \chi(t_0) \neq \emptyset$.
\item No vertex of $Conn(t_1) \setminus \chi(t_0)$ is a hub in the graph
  $(G \setminus G_{t_0 \rightarrow t_1})  \cup Conn(t_1)$.
\label{C-3}
%\item For every $v \in \chi(t_0) \cap \chi(t)$,
%  $v$ has fewer than  $m$ neighbors in $Conn(t) \setminus \chi(t_0)$.
%\label{C-4}
%\item For every $v \in Conn(t) \setminus \chi(t_0)$, $v$ has
%  at most $m$ neighbors in the graph  $(G \setminus G_{t_0 \rightarrow t})  \cup Co%nn(t)$.
%  \label{C-5}
        \end{enumerate}
  \end{theorem}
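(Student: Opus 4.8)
The plan is to let $Conn(t_1)$ be the induced subgraph on a minimal connected piece of the branch $G_{t_0 \rightarrow t_1}$ that still has a neighbour at every vertex of the adhesion, and then to turn this minimality, via Theorem~\ref{wheelstarcutset}, into the absence of hubs. Since $(T,\chi)$ is $m$-atomic it is tight by Theorem~\ref{atomictotight}; since $G\in\mathcal{C}_t\subseteq\mathcal{C}$ it is theta-free, and by hypothesis $G$ admits no clique cutset, so Theorem~\ref{connectedbranches} applies to the edge $t_0t_1$. Writing $S:=\chi(t_0)\cap\chi(t_1)$ and $D:=G_{t_0\rightarrow t_1}\setminus\chi(t_0)$, this gives that $G[D]$ is connected and $N(D)=S$; moreover, using the connectivity axiom of tree decompositions, $V(G_{t_0\rightarrow t_1})=D\cup S$, so $G_{t_0\rightarrow t_1}=G[D\cup S]$. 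Now I pick $P\subseteq D$ inclusion-minimal subject to: $G[P]$ is connected and $S\subseteq N(P)$ (such $P$ exists since $D$ itself qualifies), and set $Conn(t_1):=G[P\cup S]$, an induced subgraph of $G_{t_0\rightarrow t_1}$.

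Conditions (\ref{C-1}) and (\ref{C-2}) are then immediate. Indeed $S\subseteq P\cup S$ gives (\ref{C-1}); and since $P\subseteq D$ is disjoint from $\chi(t_0)$ while $S\subseteq\chi(t_0)$, we have $Conn(t_1)\setminus\chi(t_0)=P$, which is connected by the choice of $P$. Finally $P\subseteq D$ gives $N_G(P)\subseteq N_G(D)\cup D=S\cup D$, so the neighbourhood of $P$ computed in $Conn(t_1)$ — equivalently in the graph $(G\setminus G_{t_0\rightarrow t_1})\cup Conn(t_1)=G[(V(G)\setminus D)\cup P]$ — equals $N_G(P)\cap S$, which is $S$ since $S\subseteq N(P)$; this is (\ref{C-2}).

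For (\ref{C-3}), write $G^*:=(G\setminus G_{t_0\rightarrow t_1})\cup Conn(t_1)=G[(V(G)\setminus D)\cup P]$; this is an induced subgraph of $G$, so $G^*\in\mathcal{C}$. Suppose for a contradiction that some $v\in P$ is a hub of $G^*$, witnessed by a proper wheel $(H,v)$ of $G^*$. By Theorem~\ref{wheelstarcutset} applied in $G^*$, no component $D'$ of $G^*\setminus N_{G^*}[v]$ satisfies $H\subseteq N_{G^*}[D']$, and I would contradict this by producing such a $D'$. The point is that inclusion-minimality forces $v$ to be essential: if $P\setminus\{v\}$ were connected and still satisfied $S\subseteq N(P\setminus\{v\})$ it would contradict the choice of $P$, so either $v$ is a cutvertex of $G[P]$ or $v$ has a private attachment to $S$, i.e.\ $N(s)\cap P=\{v\}$ for some $s\in S$. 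Using this, together with the connectedness of $G[P]$ and $G[D]$ and the fact that $N(D)=S$ — so that the rim $H$ passes between $P$ and $V(G)\setminus D$ only through $S$ — I track the sectors of $(H,v)$ and locate one component $D'$ of $G^*\setminus N_{G^*}[v]$: roughly the one that, for each long sector of $(H,v)$, keeps a vertex of $P$ adjacent to $v$ and links these up through $S\setminus N(v)$ to the portion of $G^*$ lying outside $D$. A verification of the sectors then yields $H\subseteq N_{G^*}[D']$, the required contradiction. Hence no vertex of $Conn(t_1)\setminus\chi(t_0)$ is a hub of $G^*$, establishing (\ref{C-3}).

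The crux — and the expected main obstacle — is this last step: converting the inclusion-minimality of the connectifier $P$ into the statement that the rim of a hypothetical proper wheel centred at $v$ lies on a single side of the star cutset $N_{G^*}[v]$. This is where the remaining exclusions defining $\mathcal{C}$ ($C_4$, prism, even wheel) enter, theta-freeness having already been spent on Theorem~\ref{connectedbranches}, and it seems to demand a case analysis according to whether $v$ is a cutvertex of $G[P]$ or carries a private $S$-neighbour, and according to how $H$ and the neighbours of $v$ on $H$ are distributed among $P$, $S$ and $V(G)\setminus D$.
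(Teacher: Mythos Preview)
Your setup is exactly the paper's: take $P\subseteq D$ inclusion-minimal connected with $S\subseteq N(P)$ and set $Conn(t_1)=G[P\cup S]$; your verification of (\ref{C-1}) and (\ref{C-2}) is fine.

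The gap is in (\ref{C-3}), and it is not a missing case analysis but a wrong direction. You invoke Theorem~\ref{wheelstarcutset} in $G^*$ and then try to \emph{produce} a component $D'$ of $G^*\setminus N_{G^*}[v]$ with $H\subseteq N_{G^*}[D']$. But Theorem~\ref{wheelstarcutset}, which applies since $G^*\in\mathcal{C}$, asserts precisely that no such $D'$ exists; you are trying to establish a statement that the theorem itself rules out. Concretely, a long sector of $(H,v)$ can have its interior entirely inside $P\setminus N[v]$ in a piece with no attachment to $S\setminus N(v)$, and then it does not reach the outside-containing component --- nothing in your dichotomy ``$v$ is a cutvertex of $G[P]$ or has a private $S$-neighbour'' prevents this. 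The $C_4$, prism and even-wheel exclusions are not what is missing; the paper never uses them here.

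The correct move is to run the implication the other way and use minimality to absorb the conclusion of Theorem~\ref{wheelstarcutset} rather than to contradict it. Work in $G$: the wheel $(H,v)$ is a proper wheel of $G$ as well. By Theorem~\ref{connectedbranches} the set $G\setminus G_{t_0\rightarrow t_1}$ is connected and disjoint from $N_G[v]$, so it lies in one component $D_0$ of $G\setminus N_G[v]$; tightness gives $S\subseteq N_G[D_0]$. Theorem~\ref{wheelstarcutset} (in $G$) now yields $H\not\subseteq N_G[D_0]$, and since $H\subseteq (G\setminus G_{t_0\rightarrow t_1})\cup S\cup(P\setminus\{v\})\subseteq N_G[D_0]\cup(P\setminus\{v\})$ there is a vertex of $H$ in $P\setminus(\{v\}\cup N_G[D_0])$. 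Hence $P':=\{v\}\cup(P\cap N_G[D_0])\subsetneq P$. Two short checks contradict minimality: first $S\subseteq N(P')$, because any $s\in S\setminus N(v)$ actually lies in $D_0$ (it is outside $N[v]$ and adjacent to the outside piece), so its neighbour in $P$ is in $N_G[D_0]\cap P\subseteq P'$; second $G[P']$ is connected, because for $x\in P\cap N_G[D_0]$ a path from $x$ to $v$ in $G[P]$, followed until it first leaves $D_0$, lands in $N(D_0)\subseteq N[v]$ and hence reaches $v$ inside $P'$. No sector analysis and no further hypotheses on $G$ are needed.
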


  \begin{proof}

    Write
    $\chi(t_0) \cap \chi(t_1)=M$.
        Let $K$ be a minimal connected induced subgraph of $G_{t_0 \rightarrow t_1} \setminus \chi(t_0)$ such that 
        $\chi(t_0) \cap \chi(t_1)= N(K)$ (such $K$ exists since $(T, \chi)$ is tight).
                Let $M=\{m_1, \dots, m_s\}$. Since $G$ does not admit a clique cutset, we have that $s \geq 2$.

 We claim that no vertex of $K$ is a hub of
      $(G \setminus G_{t_0 \rightarrow t_1})  \cup (K \cup M)$.
    Let $v \in K$ and let $(H,v)$ be a proper wheel in
    $G' = (G \setminus G_{t_0 \rightarrow t_1}) \cup (K \cup M)$.
Then $(H,v)$ is a proper wheel in $G$.
    Since $v \in K$, it follows that
    $N_G(v) \subseteq G_{t_0 \rightarrow t_1}$.
    By Theorem \ref{connectedbranches}, 
    $G \setminus G_{t_0 \rightarrow t_1}$ is connected, and
    since $(T, \chi)$ is tight,
    every vertex of $M$ has a neighbor  in $G \setminus  G_{t_0 \rightarrow t_1}$.
    It follows that $G \setminus G_{t_0 \rightarrow t_1}$ is
    contained in a    component $D$ of $G \setminus N[v]$, and $M \subseteq N_G[D]$. Theorem \ref{wheelstarcutset} implies that $H \not\subseteq N_G[D]$. Since $H \subseteq G' \setminus \{v\} = (G \setminus G_{t_0 \rightarrow t_1}) \cup M \cup (K \setminus \{v\}) \subseteq N_G[D] \cup (K \setminus \{v\})$, it follows that $H$ contains a vertex in $(K \setminus \{v\}) \setminus N_G[D]$. Let $K' = \{v\} \cup (K \cap N_G[D])$. Then $K' \subsetneq K$, and so from the minimality of $K$, it follows that either $K'$ is not connected, or $M \not\subseteq N(K')$. Suppose first that the latter happens. Then there is an $m_i \in M \setminus N(K')$. It follows that $m_i \not\in N(v)$ as $v \in K'$, and therefore $m_i \in D$. Let $k_i$ be a neighbor of $m_i$ in $K$. Then $k_i \in N[D]$, and so
    $k_i \in K'$, a contradiction; this shows that $M \subseteq N(K')$. It follows that $K'$ is not connected. Since $K$ is connected, it follows that for every $x \in K$, there is a path $P_x$ from $x$ to $v$ with $P_x \subseteq K$. Moreover, since
    $D$ is a component of $G \setminus N[v]$, 
    if $x \in N[D]$, then $P_x \subseteq (N[D] \cap K) \cup \{v\}$. Therefore, for every $x \in K'$, we have $P_x \subseteq K'$; so $K'$ is connected. This is a contradiction, and so the claim that no vertex of $K$ is a hub of
      $(G \setminus G_{t_0 \rightarrow t_1})  \cup (K \cup M)$ follows.

Setting $Conn(t_1)=K \cup M$, Theorem \ref{connectors} follows.
  \end{proof}

  From now on we assume that $G$ does not admit a clique cutset and  construct a graph that is a safe alternative for $\hat \chi(t_0)$. Let $S'$ be a stable set of hubs of $G$, and assume that every $s \in S'$ is $d$-safe. Let $S_{bad}$ denote the set of all vertices in $S'$ that are not $t_0$-cooperative; by Lemma \ref{noncoop}, we have that $|S_{bad}| \leq 1$. Let $S=S' \setminus S_{bad}$ and set
  $$\beta(S')=\left(\chi(t_0) \cup \bigcup_{t_1 \in N_T(t_0)}Conn(t_1)\right) \setminus S_{bad} .$$
  Write $\beta=\beta(S')$.
It follows that for every $t_1 \in N_T(t_0)$, we have that $\beta \subseteq (G \setminus G_{t_0 \rightarrow t_1}) \cup Conn(t_1)$.

 From now on,  assume that $\beta$ does not admit a balanced separator of
  size at most $2m(m-1)+\gamma(t)+1$ where $\gamma(t)$ is as in Theorem \ref{noblocksmalltw_Ct}.
Let us say that a vertex $v$ is {\em unbalanced} if there is a component
$D$ of $\beta  \setminus N[v]$ such that $|D| > \frac{|\beta|}{2}$.

Recall the following well-known lemma:
\begin{lemma}[\cite{cygan}]
\label{lemma:tw-to-weighted-separator}
Let $G$ be a graph and let $k$ be a positive integer. If $\tw(G) \leq k$, then $G$ has a $(w, c)$-balanced separator of size at most $k+1$ for every $c \in [\frac{1}{2}, 1)$ and for every weight function $w:V(G) \to [0, 1]$ with $w(G) = 1$.
\end{lemma}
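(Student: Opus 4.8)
The plan is to realize the separator as a bag of an optimal tree decomposition of $G$. Fix a tree decomposition $(T,\chi)$ of $G$ with $\width(T,\chi)\le k$, so that $|\chi(t)|\le k+1$ for all $t\in V(T)$, and fix a weight function $w\colon V(G)\to[0,1]$ with $w(G)=1$. Since $\frac12\le c<1$, it suffices to produce a node $t_0\in V(T)$ such that every component $D$ of $G\setminus\chi(t_0)$ satisfies $w(D)\le\frac12$; then $\chi(t_0)$ is simultaneously a $(w,c)$-balanced separator of size at most $k+1$ for every $c\in[\frac12,1)$.

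For an edge $tt'$ of $T$, write $W_{t\rightarrow t'}=G_{t\rightarrow t'}\setminus\chi(t)$, regarded as a set of vertices. The first step is to collect the standard consequences of axiom (iii) of a tree decomposition: (a) for distinct $t',t''\in N_T(t)$, the sets $W_{t\rightarrow t'}$ and $W_{t\rightarrow t''}$ are disjoint, no edge of $G$ joins them, and together with $\chi(t)$ they cover $V(G)$ --- hence every component of $G\setminus\chi(t)$ is contained in a single $W_{t\rightarrow t'}$; and (b) for an edge $tt'$ the sets $W_{t\rightarrow t'}$ and $W_{t'\rightarrow t}$ are disjoint, so $w(W_{t\rightarrow t'})+w(W_{t'\rightarrow t})\le w(G)=1$, and in particular at most one of these two quantities exceeds $\frac12$. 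Each of these is immediate from the fact that, for a fixed vertex $v$, the set of nodes of $T$ whose bag contains $v$ is connected, exactly as in the proof of Theorem~\ref{center}.

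Call a node $t$ \emph{balanced} if $w(W_{t\rightarrow t'})\le\frac12$ for every $t'\in N_T(t)$; by (a), if $t$ is balanced then $\chi(t)$ is the separator we want, so the second step is to show that a balanced node exists. Suppose not. Then by (b) each node $t$ has exactly one neighbour $\sigma(t)$ with $w(W_{t\rightarrow\sigma(t)})>\frac12$, and $\sigma$ is a map from $V(T)$ to $V(T)$ sending each node to one of its neighbours. Iterating $\sigma$ from any node we must eventually revisit a node, and since $T$ is a tree this can only happen by oscillating between two adjacent nodes; thus there are adjacent $t,t'$ with $\sigma(t)=t'$ and $\sigma(t')=t$, so $w(W_{t\rightarrow t'})>\frac12$ and $w(W_{t'\rightarrow t})>\frac12$, contradicting (b). Hence a balanced node $t_0$ exists and $\chi(t_0)$ is the desired separator.

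The argument is routine; the only place that needs a little care is step one, namely verifying the disjointness assertions and the absence of edges between the parts $W_{t\rightarrow t'}$, all of which reduce to the connectivity in $T$ of the set of bags containing a given vertex. I would also confirm the exact convention for ``$(w,c)$-balanced separator'' used in the paper --- that every component $D$ of $G\setminus X$ satisfies $w(D)\le c\,w(V(G))$ --- but under that reading the single bag produced above works uniformly for all $c\in[\frac12,1)$, as claimed.
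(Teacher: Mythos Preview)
The paper does not prove this lemma; it is quoted from \cite{cygan} without proof. Your argument is correct and is the standard one --- in fact it is the weighted analogue of the centroid argument the paper itself gives for Theorem~\ref{center}. One small slip: the uniqueness of $\sigma(t)$ in the second step follows from (a), not (b) --- the sets $W_{t\rightarrow t'}$ for $t'\in N_T(t)$ are pairwise disjoint, so at most one can carry weight exceeding $\tfrac12$; item (b) is what you correctly invoke at the end to derive the contradiction from the oscillating pair. With that attribution fixed the proof is complete.
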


We prove:

  \begin{lemma}
    \label{smallC}
    %Suppose that $G$ does not admit a clique cutset and
    Let $s \in S \cap \chi(t_0)$. Then the following hold. 
    \begin{enumerate}
\item      $|N_{\hat{\chi}(t_0)}(s)| <  2m(m-1)$.
\item   $|N_{\chi(t_0)}(s)| < 2m(m-1)$.
  \item The vertex $s$ is unbalanced (in $\beta$).
\end{enumerate}
  \end{lemma}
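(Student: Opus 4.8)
The plan is to prove the three statements in order, with (1) doing most of the work and (2), (3) following fairly quickly. For (1), I would argue by contradiction: suppose $|N_{\hat\chi(t_0)}(s)| \geq 2m(m-1)$. Then $s$ is not $t_0$-cooperative, so by definition $s \in S_{bad}$. But $S = S' \setminus S_{bad}$, so $s \notin S$, contradicting the hypothesis $s \in S \cap \chi(t_0)$. This is essentially immediate from the definitions of $t_0$-cooperative and of $S$; the only thing to double-check is that the threshold constant $2m(m-1)$ in the definition of $t_0$-cooperative matches the bound claimed here, which it does. Statement (2) is then immediate from (1), since $\chi(t_0)$ is an induced subgraph of the torso $\hat\chi(t_0)$ (the torso only adds edges), so $N_{\chi(t_0)}(s) \subseteq N_{\hat\chi(t_0)}(s)$ and hence $|N_{\chi(t_0)}(s)| \leq |N_{\hat\chi(t_0)}(s)| < 2m(m-1)$.

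The substantive part is (3): showing that $s$ is unbalanced in $\beta$. The strategy is to exhibit a single component $D$ of $\beta \setminus N[s]$ with $|D| > |\beta|/2$, using the facts that (a) $s \in \chi(t_0)$ has few neighbors in $\beta$ — by (2), together with the structure of $\beta$, $N_\beta(s)$ should be small — and (b) $\beta$ does not admit a balanced separator of size at most $2m(m-1) + \gamma(t) + 1$, which is the standing assumption made just before the lemma. Concretely, I would first bound $|N_\beta[s]|$: the vertices of $\beta$ adjacent to $s$ lie either in $\chi(t_0)$ (at most $2m(m-1) - 1$ of them by (2), before the $+1$ for $s$ itself) or in some $Conn(t_1) \setminus \chi(t_0)$ for $t_1 \in N_T(t_0)$. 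For the latter, I would use that $N(Conn(t_1) \setminus \chi(t_0)) = \chi(t_0) \cap \chi(t_1)$ (Theorem \ref{connectors}(\ref{C-2})), so a neighbor of $s$ in $Conn(t_1) \setminus \chi(t_0)$ forces $s \in \chi(t_0) \cap \chi(t_1)$; combined with adhesion being bounded ($m$-lean gives $adh(T,\chi) < m$) and the number of such $t_1$ being controlled, this keeps $|N_\beta(s) \setminus \chi(t_0)|$ bounded as well — the total staying below $\gamma(t)+1$ or so, so that $N_\beta[s]$ has size at most $2m(m-1) + \gamma(t) + 1$.

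Then I would conclude: if $s$ were not unbalanced, every component of $\beta \setminus N[s]$ would have size at most $|\beta|/2$, making $N_\beta[s]$ a balanced separator of $\beta$ of size at most $2m(m-1) + \gamma(t) + 1$, contradicting the standing assumption. Hence $s$ is unbalanced, proving (3). The main obstacle I anticipate is getting the bookkeeping in (3) exactly right — in particular pinning down the precise bound on $|N_\beta(s) \setminus \chi(t_0)|$, which requires carefully combining the adhesion bound from $m$-leanness with the connectivity property of the $Conn(t_1)$'s and possibly a bound on how many $Conn(t_1)$ can meet $N[s]$; the constants need to line up with the $2m(m-1) + \gamma(t) + 1$ threshold. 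Everything else is routine once the definitions are unpacked.
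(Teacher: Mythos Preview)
Your arguments for (1) and (2) are fine and match the paper. The gap is in (3): you cannot bound $|N_\beta(s) \setminus \chi(t_0)|$ in the way you propose. For each $t_1 \in N_T(t_0)$ with $s \in \chi(t_0) \cap \chi(t_1)$, the set $Conn(t_1) \setminus \chi(t_0)$ is a minimal connected subgraph whose neighborhood in $\chi(t_0)$ equals the adhesion, but nothing bounds the number of vertices in $Conn(t_1) \setminus \chi(t_0)$ adjacent to $s$. The adhesion bound $|\chi(t_0) \cap \chi(t_1)| < m$ controls the wrong quantity. Likewise, the number of $t_1$ with $s \in \chi(t_0) \cap \chi(t_1)$ is not obviously bounded: these adhesions are distinct subsets of $N_{\hat\chi(t_0)}[s]$ by Theorem~\ref{connectedbranches}, but distinct subsets of a $2m(m-1)$-set can be numerous. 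So $N_\beta[s]$ need not be a small separator, and your contradiction does not go through.

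The paper's fix is to separate not by $N_\beta[s]$ but by a smaller set $\Delta(s) := N_{\chi(t_0)}(s) \cup \bigcup_{t_1 \in \delta(s)} (\chi(t_0) \cap \chi(t_1)) \subseteq \chi(t_0)$, where $\delta(s)$ is the set of $t_1 \in N_T(t_0)$ with $s$ in the adhesion. Since $\Delta(s) \subseteq N_{\hat\chi(t_0)}[s]$, part (1) gives $|\Delta(s)| \leq 2m(m-1)$, so $\beta \setminus \Delta(s)$ has a big component $D'$. The remaining issue is that $D'$ might be one of the pieces $Conn(t') \setminus \chi(t_0)$ for $t' \in \delta(s)$, in which case it could meet $N_\beta(s)$. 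Here is where $\gamma(t)$ actually enters: by Theorem~\ref{connectors}\eqref{C-3} such a piece has no hubs, so by Theorem~\ref{noblocksmalltw_Ct} and Lemma~\ref{lemma:tw-to-weighted-separator} it has a balanced separator $X$ of size $\gamma(t)+1$; then $X \cup \Delta(s)$ would be a balanced separator of $\beta$ of size at most $2m(m-1)+\gamma(t)+1$, contradicting the standing assumption. Hence $D'$ avoids all such pieces, so $D'$ is anticomplete to $s$ and embeds into a component of $\beta \setminus N_\beta[s]$. This use of the hub-free structure of the connectors is the missing idea, not bookkeeping.
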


  \begin{proof}
    Let $s \in S$.
    Since $s$ is $t_0$-cooperative, we have that
    $| N_{\hat{\chi}(t_0)}(s)| < 2m(m-1)$, and the first assertion of the
    lemma holds. Since $\chi(t_0)$ is a subgraph of $\hat{\chi}(t_0)$,
    the second assertion follows immediately from the first.

    We now prove the third assertion. Let $\delta(s)$ be the set of
    all vertices 
    $t_1 \in N_T(t_0)$  such that $s \in \chi(t_0) \cap \chi(t_1)$.
    Write
    $\Delta(s)=  N_{\chi(t_0)}(s) \cup \bigcup_{t_1 \in \delta(s)}(\chi(t_1) \cap \chi(t_0))$.  Since $\Delta(s) \subseteq N_{\hat{\chi}(t_0)}[s]$, by the first assertion of the theorem, we have that  $|\Delta(s)| \leq 2m(m-1)$. Since $\beta$ does not admit a
    $2m(m-1)$-balanced separator, it follows that there is a component
    $D'$ of $\beta \setminus \Delta(s)$ with $|D'| > \frac{|\beta|}{2}$.

   We claim that  $D' \cap Conn(t')  = \emptyset$ for every $t' \in \delta(s)$.
    Suppose not, and let $t' \in \delta(s)$ be such that
    $D' \cap Conn(t')  \neq \emptyset$. Since $D'$ is a component of
    $\beta \setminus \Delta(s)$, it follows that $D'=Conn(t') \setminus \chi(t_0)$. By Theorem \ref{connectors}\eqref{C-3}, we have that $\Hub(D')=\emptyset$. Now by
    Theorem \ref{noblocksmalltw_Ct} and Lemma~\ref{lemma:tw-to-weighted-separator}, we conclude that $D'$ has a balanced separator $X$ of size
    $\gamma(t)+1$. But now $X \cup \Delta(s)$ is a balanced separator
    of $\beta$ of size $2m(m-1)+\gamma(t)+1$, a contradiction.
    This proves the claim that  $D' \cap Conn(t')  = \emptyset$ for every $t' \in \delta(s)$.

    \medskip
    
    Since $N_{\beta}(s) \setminus \Delta(s) \subseteq \bigcup_{t_1 \in \delta(s)}Conn(t_1)$, the claim of the previous paragraph implies that  $D' \cap N_{\beta}(s)=\emptyset$.
    Let $D$ be a component of $\beta  \setminus N_{\beta}[s]$ such that
    $D' \subseteq D$; then $|D| > \frac{|\beta|}{2}$, as required. This proves
    that $s$ is unbalanced and completes the proof of Lemma \ref{smallC}.
  \end{proof}

% \begin{lemma} \label{lem:unbalanced}
%     Every vertex of $S \cap \beta$ is unbalanced. \check{not sure if we need this; added it since this statement was given without proof}
% \end{lemma}
%    \begin{proof}
%        By Lemma \ref{smallC}, it suffices to consider vertices $s$ in $\beta \setminus \chi(t_0)$; so $s \in Conn(t) \setminus \chi(T_0)$ for some $t \in N_T(t_0)$. It follows that $N(s) \subseteq Conn(t)$. Let $X = \chi(t) \cap \chi(t_0)$. Since $|X| \leq m-1$, it follows that $X$ is not a balanced separator in $\beta$. Therefore, $\beta \setminus X$ contains a component $D$ with $|D| > \frac{|\beta|}{2}$. If $D \cap Conn(t) = \emptyset$, then there is a component $D'$ of $\beta \setminus N[s]$ containing $D$ and $|D'| > \frac{|\beta|}{2}$; so $s$ is unbalanced as desired. Therefore, we have that $D \cap Conn(t) \neq \emptyset$. 

%        Since $X$ separates $Conn(t) \setminus X$ from $\beta \setminus (X \cup Conn(t))$, we may assume that $D \subseteq Conn(t) \setminus X = Conn(t) \setminus \chi(t_0)$. Now, as before, by Theorem \ref{noblocksmalltw_Ct}, we conclude that $D$ has a balanced separator $Y$ of size $\gamma(t) + 1$. But then $X \cup Y$ is a balanced separator in $\beta$ of size at most  $m-1 + \gamma(t) + 1$, a contradiction. 
 %   \end{proof}
 
  As we mentioned in Section~\ref{intro},
  handling balanced vertices was  somewhat technical in \cite{TWIII};  it
  required constructing  an artificial auxiliary graph. Using leanness provides a
  much more natural framework to deal with this obstacle.

  A {\em separation} of a graph $G$ is a triple $(Y,X,Z)$ of pairwise disjoint
  subsets of $V(G)$ with $X \cup Y \cup Z = V(G)$ such that $Y$ is anticomplete to
  $Z$.
  As in earlier papers in this series, we associate a certain unique star separation to every  vertex of $S \cap \chi(t_0)$. For $v \in S \cap \chi(t_0)$,
  let
    $B(v)$ be the unique component of $\beta \setminus N[v]$ with
  $|B(v)| > \frac{|\beta|}{2}$, let $C(v)=N(B(v)) \cup \{v\}$, and  finally, let $A(v)=\beta \setminus (B(v) \cup C(v))$. Then $(A(v), C(v),B(v))$ is the 
{\em canonical star separation of $\beta$ corresponding to $v$}. We show:
\begin{lemma} \label{nohub}
  The vertex $v$ is not a hub of $\beta \setminus A(v)$.
    \end{lemma}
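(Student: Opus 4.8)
The plan is to reduce the statement directly to Theorem~\ref{wheelstarcutset} by assuming for contradiction that some proper wheel of $\beta\setminus A(v)$ has centre $v$. Set $G'=\beta\setminus A(v)$. Since $(A(v),C(v),B(v))$ is a separation of $\beta$, as a vertex set $V(G')=B(v)\cup C(v)$; and since $G'$ is an induced subgraph of $G\in\mathcal{C}_t\subseteq\mathcal{C}$ and $\mathcal{C}$ is closed under taking induced subgraphs, $G'\in\mathcal{C}$. So suppose $(H,v)$ is a proper wheel in $G'$; then $v\notin V(H)$ and $V(H)\subseteq V(G')$.

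First I would pin down the structure of $G'$ near $v$. By the definition of the canonical star separation, $B(v)$ is a component of $\beta\setminus N_{\beta}[v]$, so no vertex of $B(v)$ is adjacent to $v$, and every vertex of $N_{\beta}(B(v))$ lies in $N_{\beta}[v]\setminus B(v)=N_{\beta}(v)$; since moreover $C(v)\setminus\{v\}=N_{\beta}(B(v))$ (this is how $C(v)$ and $A(v)$ are defined), it follows that, in $\beta$ and hence in $G'$, the vertex $v$ is complete to $C(v)\setminus\{v\}$ and anticomplete to $B(v)$. Consequently $N_{G'}[v]=C(v)$, and therefore $V(G')\setminus N_{G'}[v]=B(v)$.

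Next I would observe that $B(v)$ is a single connected nonempty component of $G'\setminus N_{G'}[v]$: it is the entire vertex set $V(G')\setminus N_{G'}[v]$, it is nonempty since $|B(v)|>\frac{|\beta|}{2}$ (such a component exists by Lemma~\ref{smallC}), and $G'[B(v)]=\beta[B(v)]$ is connected. Finally $N_{G'}(B(v))=N_{\beta}(B(v))=C(v)\setminus\{v\}$, so $N_{G'}[B(v)]=B(v)\cup(C(v)\setminus\{v\})=V(G')\setminus\{v\}\supseteq V(H)$. Applying Theorem~\ref{wheelstarcutset} to $G'\in\mathcal{C}$ and the proper wheel $(H,v)$ with the component $D=B(v)$ of $G'\setminus N_{G'}[v]$ then yields a contradiction, completing the proof.

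The argument is short and is essentially bookkeeping; the only thing to be careful about is consistently computing neighbourhoods in the correct graph ($\beta$ versus $G'=\beta\setminus A(v)$ versus $G$). I do not expect a genuine obstacle here: the substantive observation is simply that the canonical star separation $(A(v),C(v),B(v))$ is engineered precisely so that, inside $\beta\setminus A(v)$, the set $B(v)$ becomes a single connected piece of the complement of $N[v]$ whose closed neighbourhood covers everything but $v$ --- which is exactly the configuration that Theorem~\ref{wheelstarcutset} forbids for a proper wheel centred at $v$. No structural input beyond Theorem~\ref{wheelstarcutset} and the hereditariness of $\mathcal{C}$ is needed.
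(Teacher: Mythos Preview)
Your proof is correct and follows exactly the same approach as the paper's: assume a proper wheel $(H,v)$ in $\beta\setminus A(v)$, observe that $H\subseteq N[B(v)]$, and derive a contradiction from Theorem~\ref{wheelstarcutset}. The paper's version is simply terser, leaving implicit the verifications (that $N_{G'}[v]=C(v)$, that $B(v)$ is the unique component of $G'\setminus N_{G'}[v]$, and that $G'\in\mathcal{C}$) which you spell out.
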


    \begin{proof}
      Suppose
      %first
      that $(H,v)$ is a proper wheel in $\beta \setminus A(v)$.
      Then $H \subseteq N[B(v)]$, contrary to Theorem \ref{wheelstarcutset}.
      %Next suppose that $(\Sigma,P)$ is a loaded pyramid in $G \setminus A(v)$;
    %with the notation as in the defintion of a loaded pyramid let $v=b_2$ be the loaded pyramid corner of $(\Sigma, P)$. Then $a,b_1,b_3 \in N[B(v)]$, contrary 
%to \ref{loadcorener}.
      This proves Lemma~\ref{nohub}.
    \end{proof}

Let $\mathcal{O}$ be a linear order on $S \cap \chi(t_0)$.  Following \cite{TWIII}, we say
     that two unbalanced vertices $u,v$ of $S \cap \chi(t_0)$ are {\em star twins} if $B(u)= B(v)$, $C(u) \setminus \{u\} = C(v) \setminus \{v\}$, and
     $A(u) \cup \{u\} = A(v) \cup \{v\}$.
     %(Note that every two of these conditions imply the third.)

Let $\leq_A$ be a relation on $S \cap \chi(t_0)$ defined as follows: 
\begin{equation*}
\hspace{2.5cm}
x \leq_A y \ \ \ \text{ if} \ \ \  
\begin{cases} x = y, \text{ or} \\ 
\text{$x$ and $y$ are star twins and $\mathcal{O}(x) < \mathcal{O}(y)$, or}\\ 
\text{$x$ and $y$ are not star twins and } y \in A(x).\\
\end{cases}
\end{equation*} 
Note that if $x \leq_A y$, then either $x = y$, or $y \in A(x).$  
We need the following result from \cite{TWIII}:

\begin{lemma} [Abrishami,  Chudnovsky, Hajebi, Spirkl \cite{TWIII}]
  \label{A_order}
      The relation $\leq_A$ is a partial order on $S \cap \chi(t_0)$. 
\end{lemma}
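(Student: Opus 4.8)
The plan is to verify the three defining properties of a partial order for $\leq_A$ on $S\cap\chi(t_0)$. Reflexivity is immediate from the first clause of the definition. For antisymmetry and transitivity I would first isolate two elementary facts about the canonical star separations $(A(v),C(v),B(v))$. The first is already recorded in the note after the definition: if $x\leq_A y$ and $x\ne y$, then $y\in A(x)$ (in the star-twin case this uses $A(x)\cup\{x\}=A(y)\cup\{y\}$ together with $x\in C(x)$, so $x\notin A(x)$). The second, call it \emph{monotonicity}: if $v\in A(u)$, then $B(u)\subseteq B(v)$. Indeed $v\in A(u)=\beta\setminus(B(u)\cup N(B(u))\cup\{u\})$ means $v$ has no neighbour in $B(u)$ and $v\notin B(u)$, so the connected set $B(u)$ lies inside a single component of $\beta\setminus N[v]$; since $|B(u)|>\tfrac{|\beta|}{2}$, that component is the unique majority component, namely $B(v)$.

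For antisymmetry, suppose $x\leq_A y$ and $y\leq_A x$ with $x\ne y$. Then $y\in A(x)$ and $x\in A(y)$, so monotonicity applied both ways gives $B(x)=B(y)$. From $y\in A(x)$ we get $y\notin N(B(x))$ and from $x\in A(y)$ we get $x\notin N(B(y))=N(B(x))$; substituting $B(x)=B(y)$ into the definitions then yields $C(x)\setminus\{x\}=N(B(x))=C(y)\setminus\{y\}$ and $A(x)\cup\{x\}=\beta\setminus(B(x)\cup N(B(x)))=A(y)\cup\{y\}$, so $x$ and $y$ are star twins. But then both relations come from the star-twin clause, forcing $\mathcal{O}(x)<\mathcal{O}(y)<\mathcal{O}(x)$, a contradiction. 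This argument also yields the \emph{rigidity} fact I reuse: distinct $x,y$ with $B(x)=B(y)$, $y\in A(x)$, and $x\in A(y)$ must be star twins.

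For transitivity, assume $x\leq_A y\leq_A z$. The cases where two of $x,y,z$ coincide are immediate (using antisymmetry when $x=z$), so assume they are pairwise distinct. By the first fact, $y\in A(x)$ and $z\in A(y)$, and by monotonicity $B(x)\subseteq B(y)\subseteq B(z)$. Since $z\in A(y)$, $z$ has no neighbour in $B(y)$, hence none in $B(x)$, and $z\notin B(x)$ (as $|\beta|\ge 3$ forces $|B(x)|\ge 2$); with $z\ne x$ this gives $z\notin B(x)\cup N(B(x))\cup\{x\}$, i.e. $z\in A(x)$. If $x$ and $z$ are not star twins, this is exactly $x\leq_A z$. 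If they are star twins, then $B(x)=B(z)$, so $B(x)=B(y)=B(z)$; since $x$ and $z$ are distinct star twins we have $x\in A(z)$, hence $x$ has no neighbour in $B(z)$ and $x\notin B(z)$, which with $x\ne y$ and $B(z)=B(y)$ gives $x\in A(y)$. Now rigidity applied to $x,y$ (using $y\in A(x)$, $x\in A(y)$, $B(x)=B(y)$) shows $x$ and $y$ are star twins; since being star twins is an equivalence relation, $y$ and $z$ are star twins as well, so $x\leq_A y$ and $y\leq_A z$ both come from the star-twin clause, giving $\mathcal{O}(x)<\mathcal{O}(y)<\mathcal{O}(z)$ and hence $x\leq_A z$.

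I expect the main obstacle to be stating monotonicity and rigidity precisely from the definitions of $A(v)$, $B(v)$, $C(v)$—in particular, allowing that $v$ may or may not be adjacent to $B(v)$, and using the \emph{uniqueness} of the majority component together with the connectivity of $B(u)$ to trap it inside one component of $\beta\setminus N[v]$. The remaining work is bookkeeping, the only delicate point being the last case above: one must upgrade ``$x$ and $z$ are star twins'' to ``$x,y,z$ are pairwise star twins'' before the tie-breaking order $\mathcal{O}$ can be invoked, so that the $\mathcal{O}$-order inside a star-twin class is consistent with the $A$-order between classes.
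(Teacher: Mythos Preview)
The paper does not give a proof of this lemma; it is quoted from \cite{TWIII} and used as a black box. Your argument is a correct self-contained proof: the monotonicity step ($v\in A(u)\Rightarrow B(u)\subseteq B(v)$, via connectivity of $B(u)$ and uniqueness of the majority component) is exactly the engine that drives both antisymmetry and transitivity, and your ``rigidity'' observation cleanly handles the star-twin bookkeeping in the transitive case. One small simplification: in the transitivity step you justify $z\notin B(x)$ via ``$|\beta|\ge 3$ forces $|B(x)|\ge 2$'', but you already have $z\in A(y)$, hence $z\notin B(y)\supseteq B(x)$, which gives $z\notin B(x)$ directly without the size argument.
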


In view of Lemma \ref{A_order}, 
let $\Core(S')$ be  the set of all $\leq_A$-minimal elements of $S \cap \chi(t_0)$. Define
$$\beta^A(S')=\bigcap_{v \in \Core(S')} (B(v) \cup C(v)).$$
The following was also proved in \cite{TWIII}:

\begin{lemma}
 [Abrishami,  Chudnovsky, Hajebi, Spirkl \cite{TWIII}]
  \label{looselylaminar}
    Let $u,v \in \Core(S')$. Then
    $A(u) \cap C(v)=C(u) \cap A(v)=\emptyset$.
\end{lemma}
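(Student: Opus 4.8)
The plan is to prove the two equalities $A(u) \cap C(v) = \emptyset$ and $C(u) \cap A(v) = \emptyset$ by exploiting the structure of the canonical star separations together with the fact that $u,v \in \Core(S')$ are $\leq_A$-minimal. The key object is the separation $(A(u), C(u), B(u))$ with $v \in B(u) \cup C(u)$ (by minimality of $u$, we cannot have $v \in A(u)$ unless $u,v$ are star twins, in which case $A(u) \cup \{u\} = A(v) \cup \{v\}$ and one argues slightly differently). First I would record the elementary facts that $C(v) \subseteq N_\beta[v]$, that $B(v)$ is connected with $N_\beta(B(v)) = C(v) \setminus \{v\}$, and that $B(v)$ is a component of $\beta \setminus N_\beta[v]$ of size more than $|\beta|/2$; symmetrically for $u$. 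Since $B(u)$ and $B(v)$ each have size exceeding $|\beta|/2$, they must intersect.

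For the first equality, suppose $x \in A(u) \cap C(v)$. Since $x \in C(v) \subseteq N_\beta[v]$, and since $v \in B(u) \cup C(u)$ (the set $\beta \setminus A(u)$, using that $u$ is $\leq_A$-minimal so $v \notin A(u)$ unless $u = v$, the trivial case), I would argue that the edge or incidence from $x$ to $v$ must stay inside $\beta \setminus A(u)$ — but $x \in A(u)$ and $A(u)$ is anticomplete to $B(u)$ in $\beta$; so if $v \in B(u)$ this is an immediate contradiction, and if $v \in C(u) = N_\beta(B(u)) \cup \{u\}$ one has to check the case $v = u$ or $v \in N_\beta(B(u))$ separately, using that $x$ would then be a neighbor of $u$ in $A(u)$, contradicting $A(u) = \beta \setminus (B(u) \cup C(u))$ and $C(u) = N(B(u)) \cup \{u\}$ — wait, that doesn't immediately forbid $x \in N(u) \cap A(u)$. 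Here one needs that $B(u)$ is the \emph{component} of $\beta \setminus N[u]$ containing it: since $|B(u)| > |\beta|/2$ and $|B(v)| > |\beta|/2$, the connected set $B(v) \cap B(u)$ is nonempty, and $B(v) \setminus N[u]$ is connected (as $B(v)$ avoids $N[v] \supseteq C(v) \ni x$, and the relevant neighborhoods line up), forcing $B(v) \subseteq B(u)$ or symmetric containment; then $x \in C(v) = N[v] \cap (\text{boundary})$ cannot lie in $A(u)$ because every path from $x$ into the large component is trapped. The second equality $C(u) \cap A(v) = \emptyset$ follows by the symmetric argument with the roles of $u$ and $v$ swapped.

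The main obstacle will be handling the interaction of the two ``large halves'' $B(u)$ and $B(v)$ cleanly: because each has size more than $|\beta|/2$ they overlap, and the natural move is to show one of $\{A(u) \cup \{u\}, A(v) \cup \{v\}\}$ is contained in the other, i.e. that the star separations are ``laminar up to the centers.'' The star-twin case is exactly where this containment degenerates to equality, which is why $\leq_A$ was set up to break ties by the order $\mathcal{O}$; I would isolate that case first and dispatch it using the definition of star twins directly. Once the laminar containment $B(v) \subseteq B(u)$ (say) is established — which uses connectedness of $B(v)$, the bound $|B(v)| > |\beta|/2 \geq |\beta \setminus B(u)|$, and that $B(v)$ is disjoint from $N_\beta[v] \supseteq C(v)$ — both desired disjointnesses drop out: $A(u) \subseteq A(v) \cup \{v\}$ forces $A(u) \cap C(v) \subseteq \{v\} \cap C(v)$, and then $v \notin A(u)$ handles the residue, while $C(u) \cap A(v) = \emptyset$ follows since $C(u) \subseteq B(u) \cup C(u) = \beta \setminus A(u) \subseteq \beta \setminus$ (the part of $A(v)$ outside), combined with $C(u) \cap A(v) \subseteq N_\beta[u] \cap A(v)$ and $u \notin A(v)$ by minimality. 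I expect the write-up to be short once the laminarity claim is in hand; most of the work is bookkeeping about which of $u,v$ lies in which piece of the other's separation.
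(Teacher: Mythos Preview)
The paper does not supply a proof of this lemma; it is quoted from \cite{TWIII}. That said, your proposal has the right opening moves but a genuine gap. You correctly record that $C(v)\subseteq N_\beta[v]$ and that $\leq_A$-minimality forces $v\notin A(u)$ for distinct $u,v\in\Core(S')$ (after disposing of star twins, which cannot both lie in $\Core(S')$). You also correctly dispatch the case $v\in B(u)$: since $A(u)$ is anticomplete to $B(u)$, we get $N_\beta[v]\subseteq B(u)\cup C(u)$, whence $C(v)\cap A(u)=\emptyset$.

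The gap is your handling of the remaining case $v\in C(u)$. You get stuck there and pivot to a laminarity claim, asserting $B(v)\subseteq B(u)$ (equivalently $A(u)\cup\{u\}\subseteq A(v)\cup\{v\}$). This is false for distinct $u,v\in\Core(S')$: by the symmetric argument $u\in B(v)$, while $u\in C(u)$ by definition, so $u\in B(v)\setminus B(u)$; likewise $v\in B(u)\setminus B(v)$. Your supporting assertion that ``$B(v)\setminus N[u]$ is connected'' is neither justified nor sufficient---even granted, it would only yield $B(v)\setminus N[u]\subseteq B(u)$, not $B(v)\subseteq B(u)$. The actual fix is much simpler and you overlooked it: $S'$ is a \emph{stable} set and $\Core(S')\subseteq S\subseteq S'$, so distinct $u,v\in\Core(S')$ are non-adjacent. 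Since $C(u)\subseteq N_\beta[u]$, this rules out $v\in C(u)$ outright whenever $v\neq u$, and the case you struggled with never occurs. Both disjointnesses then follow from the one-line $v\in B(u)$ argument you already gave, applied symmetrically.
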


Next, we need an analogue of a result of \cite{TWIII}, summarizing the behavior
of $\beta^A(S')$.

\begin{theorem} 
  \label{A_centralbag}
    The following hold:
    \begin{enumerate}
    \item For every $v \in \Core(S')$, we have $C(v) \subseteq \beta^{A}(S')$. \label{A-1}
\item $|C(v) \cap \chi(t_0)| \leq 2m(m-1)$ for every $v \in \Core(S')$.
        \label{A-2}
      \item For every component $D$ of $\beta  \setminus \beta^{A}(S')$, there exists $v \in \Core(S')$ such that $D \subseteq A(v)$. Further, if $D$ is a component of $\beta \setminus \beta^A(S')$ and $v \in \Core(S')$ such that $D \subseteq A(v)$, then $N_{\beta}(D) \subseteq C(v)$. \label{A-3}
      \item $S' \cap \Hub(\beta^A(S'))=\emptyset$. \label{A-4}
    \end{enumerate}
\end{theorem}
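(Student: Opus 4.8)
The plan is to prove the four assertions of Theorem~\ref{A_centralbag} largely by transporting the corresponding statements of the analogous theorem in \cite{TWIII}, with Lemmas~\ref{looselylaminar} and \ref{smallC} and Theorem~\ref{connectors} providing the needed substitutes for the ingredients there. I will treat the four items in order, since later ones lean on earlier ones.

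\textbf{Items \eqref{A-1} and \eqref{A-3}.} These are purely about the ``loosely laminar'' family $\{(A(v),C(v),B(v)):v\in\Core(S')\}$ and do not use anything about hubs or even-hole-freeness, so the argument is the same as in \cite{TWIII}. For \eqref{A-1}: fix $v\in\Core(S')$ and $u\in\Core(S')$; by Lemma~\ref{looselylaminar}, $C(v)\cap A(u)=\emptyset$, so $C(v)\subseteq B(u)\cup C(u)$; intersecting over all $u$ gives $C(v)\subseteq\beta^A(S')$. For \eqref{A-3}: let $D$ be a component of $\beta\setminus\beta^A(S')$. Since $\beta\setminus\beta^A(S')=\bigcup_{v\in\Core(S')}\bigl(\beta\setminus(B(v)\cup C(v))\bigr)=\bigcup_{v\in\Core(S')}A(v)$, the vertex set of $D$ meets some $A(v)$; I must check $D\subseteq A(v)$. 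A vertex of $D$ outside $A(v)$ would lie in $B(v)\cup C(v)$, and since $D$ is connected and avoids $\beta^A(S')\supseteq C(v)$, connectivity forces $D$ to stay on one side of the separator $C(v)$, hence $D\subseteq A(v)$. The ``further'' clause is then immediate: $N_\beta(D)\subseteq\beta^A(S')$ because $D$ is a full component of the complement of $\beta^A(S')$, wait—more carefully, $N_\beta(D)$ is disjoint from $D$ and from the other components, so $N_\beta(D)\subseteq\beta^A(S')$; combined with $D\subseteq A(v)$ and the fact that $A(v)$ is anticomplete to $B(v)$, every neighbor of $D$ lies in $A(v)\cup C(v)$, whence in $C(v)$.

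\textbf{Item \eqref{A-2}.} This is where the new framework pays off. By \eqref{A-1}, $C(v)\subseteq\beta^A(S')\subseteq\beta$, and by definition $C(v)=N_\beta(B(v))\cup\{v\}$. I want $|C(v)\cap\chi(t_0)|\le 2m(m-1)$. The point is that $N_\beta(B(v))\subseteq N_\beta[v]$ is false in general, but $C(v)\setminus\{v\}=N_\beta(B(v))$ and the construction of $B(v)$ via Lemma~\ref{smallC} shows $B(v)$ is a component of $\beta\setminus N_\beta[v]$, so every vertex of $N_\beta(B(v))$ has a neighbor in $B(v)$; more to the point $N_\beta(B(v))\subseteq N_\beta(v)$ because $B(v)$ is a \emph{component} of $\beta\setminus N_\beta[v]$, so any vertex adjacent to $B(v)$ must be in $N_\beta[v]$, and it is not $v$, so it is in $N_\beta(v)$. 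Hence $C(v)\cap\chi(t_0)\subseteq (N_\beta(v)\cap\chi(t_0))\cup\{v\}$. Now I need to bound $|N_\beta(v)\cap\chi(t_0)|$. Since $\beta\subseteq(G\setminus G_{t_0\to t_1})\cup Conn(t_1)$ for each $t_1$, and each such vertex of $\chi(t_0)$ adjacent to $v$ in $\beta$ is adjacent to $v$ in $G$ (all edges of $\beta$ are edges of $G$), we have $N_\beta(v)\cap\chi(t_0)\subseteq N_G(v)\cap\chi(t_0)\subseteq N_{\hat\chi(t_0)}(v)$, which has size $<2m(m-1)$ by Lemma~\ref{smallC}(1). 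So $|C(v)\cap\chi(t_0)|\le 2m(m-1)$.

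\textbf{Item \eqref{A-4}.} This is the main obstacle, and I expect it to require the most care. I must show no $s\in S'$ is a hub of $\beta^A(S')$. First dispose of $S_{bad}$: if $s\in S_{bad}$ then $s\notin\beta$ (it was deleted), so trivially $s\notin\beta^A(S')$ and is not a hub of it. Now take $s\in S=S'\setminus S_{bad}$. If $s\notin\beta^A(S')$ we are done, so assume $s\in\beta^A(S')$, hence $s\in S\cap\chi(t_0)$ (every vertex of $\beta^A(S')\cap S$ lies in $\chi(t_0)$ since $S\subseteq\chi(t_0)$). Consider the $\leq_A$-minimal element $v$ of $\Core(S')$ with $s\leq_A v$... actually I want to find $v\in\Core(S')$ with $s\in B(v)\cup C(v)$ and, ideally, relate a putative proper wheel $(H,s)$ in $\beta^A(S')$ to a proper wheel in some $\beta\setminus A(v)$, then invoke Lemma~\ref{nohub}. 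Concretely: $\beta^A(S')\subseteq B(s)\cup C(s)=\beta\setminus A(s)$ (taking $v=s$ in the intersection defining $\beta^A(S')$, valid when $s\in\Core(S')$; if $s\notin\Core(S')$, pick the $\leq_A$-minimal $v$ below $s$, so that $s\in A(v)$, and argue that actually $s$ being a hub contradicts minimality of $v$ or else $\beta^A(S')\subseteq\beta\setminus A(v)$ doesn't contain $s$—so in fact $s\in\Core(S')$ necessarily, which I should verify). Once $\beta^A(S')\subseteq\beta\setminus A(s)$, any proper wheel $(H,s)$ in $\beta^A(S')$ is a proper wheel in $\beta\setminus A(s)$, contradicting Lemma~\ref{nohub}. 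So the crux is the bookkeeping showing $s\in\Core(S')$ whenever $s\in\beta^A(S')$: if $s\notin\Core(S')$ there is $v\in\Core(S')$ with $v\ne s$ and $s\leq_A v$... wait, $\leq_A$-minimality means $s$ is minimal, so if $s\notin\Core(S')$ there is $u<_A s$, i.e. $s\in A(u)$ (or star-twin case), and $A(u)\cap\beta^A(S')=\emptyset$ by definition of $\beta^A(S')\subseteq B(u)\cup C(u)$ together with Lemma~\ref{looselylaminar} giving $A(u)\cap C(u')=\emptyset$—hmm, I need $A(u)$ disjoint from $\beta^A(S')$, which follows since $\beta^A(S')\subseteq B(u)\cup C(u)$ and $A(u)$ is disjoint from $B(u)\cup C(u)$. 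In the star-twin case $s\in A(u)\cup\{u\}$ but $s\ne u$ so again $s\in A(u)$. Either way $s\notin\beta^A(S')$, contradiction. Hence $s\in\Core(S')$, and the argument above via Lemma~\ref{nohub} finishes the proof of \eqref{A-4} and of Theorem~\ref{A_centralbag}. $\blacksquare$
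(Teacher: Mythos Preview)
Your overall approach matches the paper's, and items \eqref{A-1}--\eqref{A-3} are handled correctly (your argument for \eqref{A-2} spells out what the paper leaves implicit). There is, however, a genuine gap in your argument for \eqref{A-4}. You write ``every vertex of $\beta^A(S')\cap S$ lies in $\chi(t_0)$ since $S\subseteq\chi(t_0)$'', but this is simply false: $S = S'\setminus S_{bad}$ where $S'$ is an arbitrary stable set of hubs of $G$, and nothing prevents a vertex of $S$ from lying in some $Conn(t_1)\setminus\chi(t_0)$ (indeed such a vertex, being outside $\chi(t_0)$, is automatically $t_0$-cooperative and hence lands in $S$, not in $S_{bad}$). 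So the implication ``$s\in\beta^A(S')\Rightarrow s\in\chi(t_0)$'' fails, and your subsequent bookkeeping with $\leq_A$---which is only defined on $S\cap\chi(t_0)$---does not apply to such $s$.

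The fix is exactly what the paper does and what you flagged in your opening sentence but then forgot to deploy: rather than assuming merely $s\in\beta^A(S')$, assume $s\in S'\cap\Hub(\beta^A(S'))$ and invoke Theorem~\ref{connectors}\eqref{C-3} to force $s\in\chi(t_0)$. Since $\beta^A(S')\subseteq\beta\subseteq (G\setminus G_{t_0\to t_1})\cup Conn(t_1)$ for each $t_1\in N_T(t_0)$, and Theorem~\ref{connectors}\eqref{C-3} says no vertex of $Conn(t_1)\setminus\chi(t_0)$ is a hub of the latter graph, no such vertex can be a hub of $\beta^A(S')$ either. Once $s\in\chi(t_0)$ is established this way (and $s\notin S_{bad}$ because $s\in\beta$), the remainder of your argument---show $s\in\Core(S')$ since otherwise $s\in A(u)$ for some $u\in\Core(S')$ whence $s\notin\beta^A(S')$; then $\beta^A(S')\subseteq\beta\setminus A(s)$ and Lemma~\ref{nohub} yields the contradiction---goes through verbatim and matches the paper's proof.
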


\begin{proof}
    \eqref{A-1} is immediate
    from Lemma  \ref{looselylaminar}, and \eqref{A-2} follows 
    from Lemma \ref{smallC}.
    
Next we prove  \eqref{A-3}. Let $D$ be a component of
$\beta \setminus \beta^A(S')$. Since  $\beta \setminus \beta^A(S')=\bigcup_{v \in \Core(S')}A(v)$, there exists
    $v \in \Core(S')$ such that $D \cap A(v) \neq \emptyset$.
    If $D \setminus A(v) \neq \emptyset$, then, since $D$ is connected, it follows that $D \cap N(A(v)) \neq \emptyset$; but then $D \cap C(v) \neq \emptyset$, contrary to \eqref{A-1}. Since $N_{\beta}(D) \subseteq \beta^A(S')$ and $N(D) \subseteq A(v) \cup C(v)$, it follows that $N_{\beta} (D) \subseteq C(v)$. 
    This proves \eqref{A-3}.

    To prove \eqref{A-4}, let $u \in S' \cap \Hub(\beta^A(S'))$.
    By Theorem \ref{connectors}\eqref{C-3}, we have that $u \in \chi(t_0)$.
    Since $\beta^A(S') \subseteq \beta$, we deduce that $u \not \in S_{bad}$.
    By  Lemma \ref{nohub}, it follows that
$\beta^A(S') \not \subseteq B(u) \cup C(u)$, and therefore $u \not \in \Core(S')$.
But then $u \in A(v)$ for some $v \in \Core(S')$, and so $u \not \in \beta^A(S')$,
a contradiction. This proves \eqref{A-4} and completes the proof of
 Theorem \ref{A_centralbag}.
  \end{proof}

In the course of the proof of Theorem \ref{main}, we will inductively obtain a bound on
$\tw(\beta^A(S))$. Next we explain how  to transform a tree decomposition of
$\beta^A(S')$ into a  tree decomposition of $\beta$. 

Let $D_1, \dots, D_s$ be the components of
    $\beta \setminus \beta^A(S')$. For $i \in \{1,\dots, s\}$, let
    $r(D_i)$ be the  $\mathcal{O}$-minimal vertex $v$ of $\Core(S')$ such that
    $D_i \subseteq A(v)$ (such $v$ exists by  Theorem \ref{A_centralbag}\eqref{A-3}).
Write $\beta_0=\chi(t_0)$.

    Let $(T_0,\chi_0)$ be a
    tree decomposition of $\beta^A(S')$, and for $i \in \{1, \dots, s\}$
    let $(T_i,\chi_i)$ be a tree decomposition of $D_i$.
Let $T_\beta$ be the tree obtained from the union of $T_0,T_1, \dots, T_s$
    as follows. Let  $i \in \{1, \dots, s\}$. 
    Choose  $t \in T_0$
    such that $r(D_i) \in \chi_0(t)$  and
    add an edge from $t$ to an arbitrarily chosen vertex of $T_i$.
    For $u \in T_{\beta}$, let $\chi_{\beta}(u)$ be defined as follows.
\begin{itemize}
    \item  If $u \in V(T_0)$, let
    $$\chi_{\beta}(u)=\chi_0(u)  \cup \bigcup_{v \in \Core(S') \cap \chi_0(u)} C(v) .$$
    \item If $u \in V(T_i)$ for $i \in \{1, \dots, s\}$, let
    $$\chi_{\beta}(u)=\chi_i(u)  \cup C(r(D_i)).$$
\end{itemize}

    \begin{theorem}
      \label{A_extendtree}
      With the notation as above, $(T_{\beta},\chi_{\beta})$ is a tree decomposition of
      $\beta$. Moreover,
      \begin{itemize}
     \item  for every $t \in T_0$,
      $$|\chi_{\beta}(t) \cap \beta_0| \leq |\chi_0(t)|+2m(m-1)(|\chi_0(t) \cap \Core(S')|),$$
       and
       \item for every $i \in \{1, \dots, s\}$ and
      $t \in T_i$,
      $$|\chi_{\beta}(t)  \cap \beta_0| \leq |\chi_i(t)|+2m(m-1).$$
\end{itemize}
      \end{theorem}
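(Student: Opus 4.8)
The plan is to verify the three tree-decomposition axioms for $(T_\beta,\chi_\beta)$ and then to obtain the two displayed inequalities directly from Theorem~\ref{A_centralbag}\eqref{A-2}. Note that $T_\beta$ is a tree, being a disjoint union of trees joined by $s$ bridges. Since $V(\beta)$ is the disjoint union of $V(\beta^A(S'))$ and $V(D_1),\dots,V(D_s)$, the vertex axiom is clear: a vertex of $\beta^A(S')$ lies in some $\chi_0(u)\subseteq\chi_\beta(u)$, and a vertex of $D_i$ lies in some $\chi_i(u)\subseteq\chi_\beta(u)$. For the edge axiom, an edge with both ends in $\beta^A(S')$ is contained in a bag of $(T_0,\chi_0)$, an edge with both ends in some $D_i$ is contained in a bag of $(T_i,\chi_i)$, and no edge joins $D_i$ to $D_j$ for $i\neq j$; finally, if $xy$ is an edge with $x\in D_i$ and $y\in\beta^A(S')$, then since $D_i\subseteq A(r(D_i))$ we get $y\in N_\beta(D_i)\subseteq C(r(D_i))$ by Theorem~\ref{A_centralbag}\eqref{A-3}, so any bag $\chi_i(u)$ of $(T_i,\chi_i)$ with $x\in\chi_i(u)$ satisfies $\chi_\beta(u)=\chi_i(u)\cup C(r(D_i))\ni x,y$.

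Before the connectivity axiom I would record one structural fact: $C(w)\subseteq N_\beta[w]$ for every $w\in\Core(S')$. Indeed $B(w)$ is a component of $\beta\setminus N_\beta[w]$, so any vertex outside $B(w)$ with a neighbour in $B(w)$ lies in $N_\beta[w]$; hence $C(w)=N_\beta(B(w))\cup\{w\}\subseteq N_\beta[w]$. Combining this with $\Core(S')\subseteq V(\beta^A(S'))$ (immediate from $w\in C(w)$ and Theorem~\ref{A_centralbag}\eqref{A-1}): if $v\in V(\beta^A(S'))$, $w\in\Core(S')$ and $v\in C(w)$ with $v\neq w$, then $v\in N_\beta(w)$, so $vw$ is an edge of $\beta$ with both ends in $\beta^A(S')$, hence an edge of the induced subgraph $\beta^A(S')$; therefore $v$ and $w$ lie together in some bag of $(T_0,\chi_0)$.

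Now for connectivity. If $v\in D_i$, then $v$ lies outside $\beta^A(S')$, hence outside every $C(w)$ (which is contained in $\beta^A(S')$) and outside every $D_j$ with $j\neq i$; so $v\in\chi_\beta(u)$ only for $u\in V(T_i)$, where $v\in\chi_\beta(u)$ iff $v\in\chi_i(u)$, so the support of $v$ is the connected $\chi_i$-subtree of $v$. If $v\in V(\beta^A(S'))$, set $W(v)=\{w\in\Core(S'):v\in C(w)\}$; unwinding the definitions, for $u\in V(T_0)$ one has $v\in\chi_\beta(u)$ iff $v\in\chi_0(u)$ or some $w\in W(v)$ lies in $\chi_0(u)$, while for $u\in V(T_i)$ one has $v\in\chi_\beta(u)$ iff $r(D_i)\in W(v)$, in which case \emph{every} node of $T_i$ qualifies. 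By the previous paragraph each $\chi_0$-subtree of a vertex $w\in W(v)$ meets the nonempty $\chi_0$-subtree of $v$ (trivially if $w=v$), and since a union of subtrees of a tree each meeting one fixed subtree is connected, the set $U$ of nodes $u\in V(T_0)$ with $v\in\chi_\beta(u)$ is a connected subtree of $T_0$. Finally, whenever $r(D_i)\in W(v)$ the tree $T_i$ is attached to a node $t$ with $r(D_i)\in\chi_0(t)$, so $t\in U$, and both $t$ and the attachment node of $T_i$ contain $v$ in their $\chi_\beta$-bags; hence adjoining $V(T_i)$ keeps the support of $v$ connected. This establishes that $(T_\beta,\chi_\beta)$ is a tree decomposition of $\beta$.

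The bag-size bounds are then routine. With $\beta_0=\chi(t_0)$ and $r(D_i)\in\Core(S')$ for all $i$: for $t\in V(T_0)$ we have $\chi_\beta(t)\cap\beta_0\subseteq(\chi_0(t)\cap\beta_0)\cup\bigcup_{v\in\Core(S')\cap\chi_0(t)}(C(v)\cap\chi(t_0))$, and $|C(v)\cap\chi(t_0)|\leq 2m(m-1)$ for each such $v$ by Theorem~\ref{A_centralbag}\eqref{A-2}, which yields the first inequality; for $t\in V(T_i)$ we have $\chi_\beta(t)\cap\beta_0\subseteq(\chi_i(t)\cap\beta_0)\cup(C(r(D_i))\cap\chi(t_0))$ with $|C(r(D_i))\cap\chi(t_0)|\leq 2m(m-1)$, which yields the second. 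The one genuinely delicate step is the connectivity axiom for $v\in V(\beta^A(S'))$ — ensuring that the $\chi_0$-subtrees of the various $w\in W(v)$ hook together rather than floating apart — and its resolution is exactly the observation $C(w)\subseteq N_\beta[w]$, which upgrades $v\in C(w)\setminus\{w\}$ to a genuine edge of $\beta^A(S')$.
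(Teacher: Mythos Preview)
Your proof is correct. The paper's own proof is much shorter: it simply cites Theorem~6.6 of \cite{TWIII} for the claim that $(T_\beta,\chi_\beta)$ is a tree decomposition, and invokes Lemma~\ref{smallC} for the bag-size bounds. Your direct verification of the three axioms---with the key step being the observation $C(w)\subseteq N_\beta[w]$, which forces the $\chi_0$-subtree of each $w\in W(v)$ to meet the $\chi_0$-subtree of $v$ and thereby handles connectivity---is essentially what that cited result does, so the two approaches coincide in substance; yours has the advantage of being self-contained within the present paper. For the inequalities you appeal to Theorem~\ref{A_centralbag}\eqref{A-2} rather than Lemma~\ref{smallC} directly, but these are equivalent here since \eqref{A-2} is itself derived from Lemma~\ref{smallC}.
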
      

    \begin{proof}
      The fact that $(T_\beta, \chi_\beta)$ is a tree decomposition of $\beta$ follows from Theorem  6.6 of \cite{TWIII}. Recall that by          Lemma \ref{smallC},
      $|(C(v) \setminus v) \cap \beta_0| < 2m(m-1)$
      for every $s \in \Core(S')$.
      Now the theorem follows directly from 
      the definition of $(T_{\beta}, \chi_{\beta})$.
          \end{proof}

  We finish this section with a theorem that allows us to transform
  a tree decomposition of $\beta$ into a tree decomposition of $G$.

  Write $N_T(t_0)=\{t_1, \dots, t_r\}$, and let $D_i'=G_{t_0 \rightarrow t_i} \setminus \chi(t_0)$ for $i \in \{1, \dots, r\}.$

    Let $(T_0',\chi_0')$ be a
    tree decomposition of $\beta$, and for $i \in \{1, \dots, r\}$
    let $(T_i',\chi_i')$ be a tree decomposition of $D_i'$.
    Let $U$ be the tree obtained from the union of $T_0',T_1', \dots, T_r'$
    as follows. Let  $i \in \{1, \dots, r\}$. 
    Choose  $t \in T_0'$
    such that $\chi_0'(t) \cap (Conn(t_i) \setminus \chi(t_0)) \neq \emptyset$ and
    add an edge from $t$ to an arbitrarily chosen vertex of $T_i'$.

    For $u \in U$, let $\psi(u)$ be defined as follows.
\begin{itemize}
    \item  If $u \in V(T_0')$, let
      $$\psi(u)=S_{bad} \cup (\chi_0'(u)  \cap \beta_0) \cup
      \bigcup_{t_i \text{ s.t. }   \chi_0'(u) \cap (Conn(t_i) \setminus \chi(t_0)) \neq \emptyset} (\beta_0 \cap \chi(t_i)).$$
      
    \item If $u \in V(T_i')$ for $i \in \{1, \dots, r\}$, let
    $$\psi(u)=S_{bad} \cup \chi_i'(u) \cup (\beta_0 \cap \chi(t_i)).$$
\end{itemize}

    \begin{theorem}
      \label{delta_extendtree}
      With the notation as above, $(U,\psi)$ is a tree decomposition of $G$.

    \end{theorem}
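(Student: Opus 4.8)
The plan is to verify the three defining conditions of a tree decomposition directly, using the structural picture of $G$ relative to $\chi(t_0)$ and of $\beta$ established earlier in this section. Recall first that, since $G$ is theta-free and has no clique cutset and $(T,\chi)$ is tight, Theorem~\ref{connectedbranches} shows that each $D_i'=G_{t_0\rightarrow t_i}\setminus\chi(t_0)$ is connected with $N_G(D_i')=M_i$, where I write $M_i=\chi(t_0)\cap\chi(t_i)=\beta_0\cap\chi(t_i)$; moreover, as is standard, the sets $D_1',\dots,D_r'$ are pairwise disjoint and pairwise anticomplete, and $V(G)=\chi(t_0)\cup D_1'\cupcup D_r'$. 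Set $K_i=Conn(t_i)\setminus\chi(t_0)$; by Theorem~\ref{connectors}, $K_i$ is a connected subgraph of $D_i'$ with $N_G(K_i)=M_i$ and $M_i\subseteq Conn(t_i)$, and since $S_{bad}\subseteq\chi(t_0)$ is disjoint from each $K_i$, we get $\beta=(\chi(t_0)\setminus S_{bad})\cup K_1\cupcup K_r$ with each $K_i\subseteq V(\beta)$.

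For the coverage condition, since $S_{bad}$ lies in every bag of $\psi$, it is enough to place the vertices of $V(G)\setminus S_{bad}=(\chi(t_0)\setminus S_{bad})\cup D_1'\cupcup D_r'$: a vertex of $\chi(t_0)\setminus S_{bad}\subseteq V(\beta)$ appears in $\chi_0'(u)\cap\beta_0\subseteq\psi(u)$ for a suitable $u\in T_0'$, and a vertex of $D_i'$ appears in $\chi_i'(u)\subseteq\psi(u)$ for a suitable $u\in T_i'$. For the edge condition, any edge incident with $S_{bad}$ is trivially covered; an edge inside $\chi(t_0)$ with no end in $S_{bad}$ is an edge of the induced subgraph $\beta$ and so is covered via the term $\chi_0'(u)\cap\beta_0$ of some bag $\psi(u)$ with $u\in T_0'$; an edge inside $D_i'$ is covered by $(T_i',\chi_i')$; an edge between $v\in\chi(t_0)$ and $w\in D_i'$ has $v\in N_G(D_i')=M_i$, and $M_i$ is added to every bag $\psi(u)$ with $u\in T_i'$, so $v$ and $w$ lie together in any such bag containing $w$; and there are no edges between distinct $D_i'$ and $D_j'$.

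The substantive part is the connectivity condition, and I expect the main difficulty to be showing that the ``extra'' occurrences of a vertex $v\in\chi(t_0)\setminus S_{bad}$ produced by the adhesion terms $M_i$ inserted into the bags $\psi(u)$ for $u\in T_0'$ do not disconnect the support of $v$. For $v\in S_{bad}$ the support is all of $U$; for $v\in D_i'$, the vertex $v$ belongs to no $M_j$, to no $\beta_0$, and to no $S_{bad}$, so it occurs only in the bags $\psi(u)$ with $u\in T_i'$ and $v\in\chi_i'(u)$, a connected set. Now let $v\in\chi(t_0)\setminus S_{bad}$ and put $J_v=\{i:v\in M_i\}$; note $i\in J_v$ precisely when $v$ has a neighbor in $K_i$ (as $N_G(K_i)=M_i$). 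For $u\in T_i'$ we have $v\in\psi(u)$ if and only if $i\in J_v$, independently of $u$; for $u\in T_0'$ we have $v\in\psi(u)$ if and only if $v\in\chi_0'(u)$ or $\chi_0'(u)\cap K_i\neq\emptyset$ for some $i\in J_v$, so the support of $v$ within $T_0'$ is $A\cup\bigcup_{i\in J_v}B_i$ with $A=\{u\in T_0':v\in\chi_0'(u)\}$ and $B_i=\{u\in T_0':\chi_0'(u)\cap K_i\neq\emptyset\}$. Here $A$ is connected by condition (iii) for $(T_0',\chi_0')$, each $B_i$ is connected because the bags meeting the connected subgraph $K_i$ of $\beta$ form a subtree, and $A\cap B_i\neq\emptyset$ for $i\in J_v$ since $v$ and one of its neighbors in $K_i$ — both vertices of $\beta$ — share a bag of $(T_0',\chi_0')$; hence the support of $v$ within $T_0'$ is connected. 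Finally, each tree $T_i'$ with $i\in J_v$ is attached by construction to some $t\in T_0'$ with $\chi_0'(t)\cap K_i\neq\emptyset$, so $t\in B_i$ and $M_i\subseteq\psi(t)$, whence $v\in\psi(t)$; thus the total support of $v$ in $U$ is connected and $(U,\psi)$ is a tree decomposition of $G$. Alternatively, this could be derived from the general gluing lemma invoked for Theorem~\ref{A_extendtree} (Theorem~6.6 of \cite{TWIII}), applied to the separation of $G$ induced by $\chi(t_0)$ and the branches $D_i'$, but the direct verification above seems more transparent here.
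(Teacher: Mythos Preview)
Your proof is correct and follows essentially the same approach as the paper's: both verify the three tree-decomposition axioms directly, with the substantive work being the connectivity of the support of a vertex $v\in\chi(t_0)\setminus S_{bad}$, handled by showing that the subtree $A=\chi_0'^{-1}(v)$ meets each subtree $B_i=\chi_0'^{-1}(K_i)$ (for $i\in J_v$) via the edge from $v$ to a neighbor in $K_i$. The paper phrases this step as ``$\{x\}\cup(Conn(t)\setminus\chi(t_0))$ is connected, hence its $\chi_0'$-preimage is a subtree,'' which is the same argument packaged slightly differently.
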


   \begin{proof}
     Since $U$ is obtained by adding a single edge from $T_0'$ to each of
     the trees $T_1', \dots, T_r'$, it follows that $U$ is a tree.
     Clearly, every vertex of $G$ is in $\psi(v)$ for some $v \in V(U)$.
     Next we check that for every edge $xy$ of $G$, there exists $v \in V(U)$
     such that $x,y \in \psi(v)$. This is clear if  $x,y \in \chi(t_0)$ or if
     $x,y \in D_i'$ for some $i$; thus we may assume that $x \in \chi(t_0)$ and
     $y \in D_1'$, say.    Since $(T, \chi)$ is a tree decomposition of $G$,
     it follows that  $x \in \chi(t_0) \cap \chi(t_1)$. 
     Let $v \in V(T_1')$ such that $y \in \chi_1'(v) \subseteq \psi(v)$; then $x,y \in \psi(v)$ as required.

 Let $x \in V(G)$. 
          For $i \in \{0, 1, \dots, r\}$, define $F_{i}(x)=\{v \in V(T_i') \text{ such that }x \in \psi(v)\}$. Let $F(x)=\bigcup_{i=0}^rF_i(x)$. We  need to show that for every $x \in V(G)$, the induced subgraph
     $U[F(x)]$ is connected. This is true if $x \in S_{bad}$, since then $F(x) = V(U)$. Therefore, we now assume that $x \not\in S_{bad}$.      If $x \not\in \beta_0$, then $F_0(x)=\emptyset$, and there exists a unique $i$ such that $x \in D_i'$; therefore $F(x)=F_i(x)$, and $U[F(x)]$ is connected because $T_i'[F_i(x)]$ is connected.
     Thus we may assume that $x \in \beta_0$.
     Let $I \subseteq \{1, \dots, r\}$
     be the set of all $i$ such that $x \in \chi(t_0) \cap \chi(t_i)$. It follows that $F_i(x) = \emptyset$ for all $i \in \{1, \dots, r\} \setminus I$, and $F_i(x) = V(T_i')$ for all $i \in I$.

     First we show that for every $i \in I$,  there is an edge between
     $T_i'[F_i(x)] = T_i'$ and     $T_0'[F_0(x)]$. 
     Let $i \in I$;
     and let $v \in T_0'$ be such that $v$ is adjacent to a vertex of
     $T_i'$. Then $(Conn(t_i) \setminus \chi(t_0)) \cap  \chi_0'(v) \neq \emptyset$; consequently $\chi(t_0) \cap \chi(t_i) \subseteq \psi(v)$, and therefore, $x \in \psi(v)$. Thus there is an edge between $T_i'[F_i(x)]$ and
     $T_0'[F_0(x)]$, as required.

     Now to show that $U[F(x)]$ is connected, it is enough to prove that
     $T_0'[F_0(x)]$ is connected.
     %Let $J(x)=\{t \in N_{T}(t_0) \text { such that } x \in Conn(t)\}$.
Write $\chi_0'^{-1}(x) := \{v \in T'_0 : x \in \chi_0'(v)\}$ and observe that
$$F_0(x)=\chi_0'^{-1}(x) \cup \bigcup_{t \in I} \chi_0'^{-1}(Conn(t) \setminus \chi(t_0)).$$
Since by Theorem \ref{connectors}\eqref{C-2}, we have that $\{x\} \cup (Conn(t) \setminus \chi(t_0))$ is connected, it follows from basic properties of a tree decomposition that $T_0'[\chi_0'^{-1}(\{x\} \cup (Conn(t) \setminus \chi(t_0)))]$ is connected. Since $\chi_0'^{-1}(\{x\}) \neq \emptyset$, it follows that the union $T_0'\left[\bigcup_{t \in I} \chi_0'^{-1}(\{x\} \cup (Conn(t) \setminus \chi(t_0)))\right]$ is connected. We deduce 
       that $T_0'[F_0(x)]$ is connected, as required. This
       proves  Theorem \ref{delta_extendtree}.
      
     \end{proof}

   \section{Connectifiers  revisited}
   \label{sec:connectifiers}
   
  We start by recalling a well known theorem of Erd\H{o}s and Szekeres \cite{ES}.
  \begin{theorem}[Erd\H{o}s and Szekeres \cite{ES}]
    \label{ESz}
Let $x_1, \dots, x_{n^2+1}$ be a sequence of distinct reals. Then
there exists either an increasing or a decreasing $(n + 1)$-sub-sequence.
\end{theorem}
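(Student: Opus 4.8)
The plan is to prove Theorem~\ref{ESz} by the classical pigeonhole argument (due to Seidenberg). For each index $i \in \{1, \dots, n^2+1\}$ I would let $a_i$ denote the maximum length of an increasing subsequence of $x_1, \dots, x_{n^2+1}$ whose final term is $x_i$, and similarly let $b_i$ denote the maximum length of a decreasing subsequence ending at $x_i$. Both quantities are well-defined positive integers, since the single-term subsequence consisting of $x_i$ already shows $a_i \geq 1$ and $b_i \geq 1$.

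The crucial step is to observe that the map $i \mapsto (a_i, b_i)$ is injective. To see this, take $i < j$. Because the $x_\ell$ are distinct, exactly one of $x_i < x_j$ and $x_i > x_j$ holds. If $x_i < x_j$, then appending $x_j$ to a longest increasing subsequence ending at $x_i$ produces an increasing subsequence ending at $x_j$, so $a_j \geq a_i + 1$; if $x_i > x_j$, then the same move with decreasing subsequences gives $b_j \geq b_i + 1$. In either case $(a_i,b_i) \neq (a_j,b_j)$, as claimed.

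To finish, suppose toward a contradiction that there is neither an increasing nor a decreasing subsequence of length $n+1$. Then $a_i \in \{1, \dots, n\}$ and $b_i \in \{1,\dots,n\}$ for every $i$, so the pairs $(a_i,b_i)$ lie in a set of size $n^2$; by injectivity this forces $n^2+1 \leq n^2$, a contradiction. Hence one of the two desired $(n+1)$-term subsequences must exist.

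There is no genuine obstacle here: the proof is short and self-contained, relying only on pigeonhole. The single point that must be handled with care is the hypothesis that the reals are \emph{distinct}, which is precisely what guarantees that for $i \neq j$ one of the strict inequalities $x_i < x_j$, $x_i > x_j$ holds, and hence what makes the coordinate pair strictly increase along any pair of indices --- the mechanism driving the injectivity argument.
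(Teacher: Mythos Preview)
Your proof is correct and complete; this is the standard Seidenberg pigeonhole argument for the Erd\H{o}s--Szekeres theorem. The paper itself does not supply a proof of Theorem~\ref{ESz}, merely citing \cite{ES}, so there is nothing to compare against.
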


  We will also need the following easy lemma, whose proof we include for completeness.

  \begin{lemma}
    \label{digraph}
    Let  $k$ be an integer and let $D$ be a directed graph  in which  every vertex has at most  $k$ outneighbors. Let $D^-$ be the underlying undirected graph of $D$.
    Then there exists $X \subseteq V(D)$ with $|X| \geq \frac{|V(D)|}{2k+1}$ such that
    $X$ is a stable set of $D^-$.
  \end{lemma}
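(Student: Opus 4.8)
The plan is to reduce this to a greedy/degeneracy-style argument on the underlying undirected graph $D^-$. The key observation is that since every vertex of $D$ has at most $k$ outneighbors, the directed graph $D$ has at most $k|V(D)|$ arcs, hence $D^-$ has at most $k|V(D)|$ edges, so the average degree of $D^-$ is at most $2k$. This means $D^-$ has a vertex of degree at most $2k$; moreover, this property is hereditary, since deleting vertices only decreases outdegrees. In other words, $D^-$ is $2k$-degenerate.

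Concretely, I would argue as follows. Set $n = |V(D)|$ and proceed by induction on $n$ (the case $n = 0$ being trivial). Since $\sum_{v} \deg_{D^-}(v) = 2|E(D^-)| \leq 2k n$, there is a vertex $v$ with $\deg_{D^-}(v) \leq 2k$. Delete $v$ together with its (at most $2k$) neighbors in $D^-$ to obtain a directed graph $D'$ on at least $n - (2k+1)$ vertices; every vertex of $D'$ still has at most $k$ outneighbors (we only removed vertices). By induction, $D'$ has a stable set $X'$ of $D'^-$ with $|X'| \geq \frac{|V(D')|}{2k+1} \geq \frac{n - (2k+1)}{2k+1} = \frac{n}{2k+1} - 1$. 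Then $X = X' \cup \{v\}$ is a stable set of $D^-$ (since $v$ has no neighbors left among $X' \subseteq V(D')$, all of $v$'s $D^-$-neighbors having been deleted), and $|X| \geq \frac{n}{2k+1}$, as required.

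I do not anticipate a genuine obstacle here; the only mild care needed is the bookkeeping in the induction — one must delete the closed neighborhood of $v$ (not just $v$) so that $v$ can be safely added back to the stable set, and one must check the arithmetic $|V(D')| \geq n - (2k+1)$ survives the inductive bound. An alternative, essentially equivalent, phrasing avoids induction entirely: order the vertices $v_1, \dots, v_n$ so that each $v_i$ has at most $2k$ neighbors among $v_{i+1}, \dots, v_n$ in $D^-$ (a degeneracy ordering, which exists by the edge count applied repeatedly), then greedily scan from $v_1$ to $v_n$, adding a vertex to $X$ whenever it is non-adjacent in $D^-$ to all previously chosen vertices; each added vertex "blocks" at most $2k$ later vertices, so $|X| \geq \frac{n}{2k+1}$. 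Either write-up is short; I would go with whichever matches the surrounding style of the paper.
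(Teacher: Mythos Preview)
Your proof is correct and takes essentially the same approach as the paper: both observe that the outdegree bound forces every induced subgraph of $D^-$ to have at most $k$ times as many edges as vertices, hence $D^-$ is $2k$-degenerate, and then extract a large stable set. The only cosmetic difference is that the paper routes the last step through $(2k+1)$-colorability (and takes the largest color class), whereas you run the greedy/inductive argument directly; these are equivalent standard deductions from degeneracy.
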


  \begin{proof}
  Every induced subgraph $H$ of $D^{-}$ has at most $k|V(H)|$ edges, and therefore contains a vertex of degree at most $2k$. This shows that $D^{-}$ is $2k$-degenerate, and therefore $(2k+1)$-colorable. So $D^{-}$ has a stable set of size at least $|V(D)|/(2k+1)$.     This proves Lemma \ref{digraph}.
    \end{proof}

  A vertex $v$ in a graph $G$ is said to be a \textit{branch vertex} if $v$
  has degree more than two. By a {\em caterpillar} we mean a tree $C$ with
  maximum degree three such that  there exists  a path $P$ of $C$ where all
  branch vertices of $C$ belong to $P$.
We call a maximal such path $P$ the {\em spine} of $P$. % Note (Sophie): changed this to a maximal path because otherwise the definition of legs in the case of line graphs of caterpillars is a mess
  (Our definition of a caterpillar is
  non-standard for two reasons: a caterpillar is often allowed to be of
  arbitrary maximum degree, and a spine often contains all vertices of degree
  more than one.) 
   By a \textit{subdivided star} we mean a graph isomorphic to
  a subdivision of the complete bipartite graph $K_{1,\delta}$ for some $\delta\geq 3$.
  In other words, a subdivided star is a tree with exactly one branch vertex,
  which we call its \textit{root}. For a graph $H$, a vertex $v$ of $H$ is
  said to be \textit{simplicial} if $N_H(v)$ is a clique. We denote by
  $\mathcal{Z}(H)$ the set of all simplicial vertices of $H$. Note that for
  every tree $T$, $\mathcal{Z}(T)$ is the set of all leaves of $T$. An edge
  $e$ of a tree $T$ is said to be a \textit{leaf-edge} of $T$ if $e$ is
  incident with a leaf of $T$. It follows that if $H$ is the line graph of a
  tree $T$, then $\mathcal{Z}(H)$ is the set of all vertices in $H$
  corresponding to the leaf-edges of $T$. The following is proved in
  \cite{TWVII} based on (and refining) a result from \cite{Davies}. 
\begin{theorem}[Abrishami, Alecu, Chudnovsky, Hajebi, Spirkl \cite{TWVII}]\label{connectifier1}
  For every integer $h\geq 1$, there exists an integer $\mu=\mu(h)\geq 1$ with the following property. Let $G$ be a connected graph with no clique of cardinality $h$ and let
  $S \subseteq G$ such that $|S|\geq \mu$. Then either some path in $G$ contains $h$ vertices of $S$, or 
  there is an induced subgraph $H$ of $G$ with $|H\cap S|=h$ for which one of the following holds.
  \begin{itemize}
  \item $H$ is either a caterpillar or the line graph of a caterpillar with $H\cap S=\mathcal{Z}(H)$.
  \item $H$ is a subdivided star with root $r$ such that $\mathcal{Z}(H)\subseteq H\cap S\subseteq \mathcal{Z}(H)\cup \{r\}$.
  \end{itemize}
\end{theorem}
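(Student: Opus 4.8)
The plan is to argue by induction (primarily on $h$, the cases $h\le 2$ being immediate), after two normalizing reductions. First, since $G[S]$ contains no $K_h$, Ramsey's theorem lets us choose $\mu=\mu(h)$ large enough that we may pass to a subset $S_0\subseteq S$ which is stable in $G$ and still as large as we need; so it suffices to prove the statement assuming $S$ is stable. Second, since $G$ is connected and finite, we may replace $G$ by an inclusion-minimal connected induced subgraph $F$ with $S\subseteq V(F)$: any induced subgraph $H$ of $F$ is induced in $G$, and $F$ inherits the hypothesis of having no $K_h$. Minimality makes $F$ tractable: if $v\in V(F)\setminus S$ then $F\setminus v$ is a proper induced subgraph of $F$ containing $S$, so it is disconnected; hence \emph{every} non-$S$ vertex of $F$ is a cutvertex, and in particular every leaf of $F$ lies in $S$. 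Since $S$ is stable and $|S|\ge 3$, $F$ is not $2$-connected (a $2$-connected $F$ has no cutvertex, which would force $V(F)=S$ and make $F$ edgeless, absurd), so $F$ has a cutvertex; inspecting the block--cut tree shows moreover that every leaf block of $F$ is a bridge, so $F$ has a pendant vertex lying in $S$.

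For the inductive step, fix a pendant vertex $x\in S$ with (unique) neighbour $y$ --- a cutvertex --- and let $C_0=\{x\},C_1,\dots,C_m$ be the components of $F\setminus y$. If at least $h$ of the $C_i$ meet $S$, we get a subdivided star: from each of $h$ such components take an induced path in $F$ from $y$ to the set $S$ inside that component; since distinct $C_i$ are anticomplete in $F\setminus y$, the union of these paths is an induced subdivision of $K_{1,h}$ (with routine bookkeeping if $y\in S$: use one fewer leg, or, in the degenerate case $h=3$, just read off a path with three vertices of $S$), with root $y$, with every leaf in $S$, and --- by minimality of the paths --- with no interior vertex of a leg in $S$; this is precisely the third outcome. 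Otherwise fewer than $h$ of the $C_i$ meet $S$, so almost all of $S$ lies in a few components of $F\setminus y$; the idea is to recurse into the $S$-heaviest such component with reduced target $h-1$ and then append $x$ as a pendant to the structure returned. For this one proves a strengthened statement that additionally tracks a ``root'' vertex at which the returned structure may be extended (so that $x$ can indeed be appended, as a new leaf of a caterpillar, a new triangle-leaf of a line graph of a caterpillar, or a new leg of a subdivided star); one checks that a sufficiently fast-growing choice of $\mu(\cdot)$ keeps every descent within hypothesis.

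The genuinely hard part, and where the caterpillar and line-graph-of-caterpillar outcomes truly arise, is when the branching of $F$ is ``spread out'' along a long induced path rather than concentrated at one vertex. Here the plan is to build such a path $P$ --- the prospective spine --- by iterating the cutvertex reduction down a chain of cutvertices while peeling off pendant-like vertices of $S$, and then to read off the structure. Three difficulties must be overcome. (i) \emph{Inducedness at the joints}: $P$ is assembled from pieces living in successive subgraphs, and one must control the cutvertices at which they meet so that $P$ is genuinely induced and each peeled $s\in S$ attaches to $P$ in a controlled way --- one neighbour on $P$ contributes a pendant edge (caterpillar), two adjacent neighbours contribute a triangle (line graph of a caterpillar), while two non-adjacent neighbours, or three or more neighbours, on $P$ permit a re-routing that shortens $P$, contradicting a suitably extremal choice of $P$. (ii) \emph{Organizing the attachments}: to force the final induced subgraph to be exactly one of the listed shapes one applies Theorem~\ref{ESz} to extract from the attachment points a subsequence appearing in a consistent order along $P$, and Lemma~\ref{digraph} to thin the $S$-vertices to a family whose attachment neighbourhoods on $P$ pairwise do not interfere; passing to a sub-path then gives a clean caterpillar or line graph of a caterpillar, all of whose simplicial vertices lie in $S$. (iii) \emph{Bookkeeping}: the constants must be chosen so that every descent and every invocation of Ramsey or Erd\H{o}s--Szekeres has enough room, which makes $\mu(h)$ a fast-growing but finite function of $h$. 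In every case, once an induced subgraph of the right shape with at least $h$ vertices of $S$ has been produced, deleting surplus leaves or legs preserves the shape and yields $|H\cap S|=h$; and at any point where some induced path of $G$ already contains $h$ vertices of $S$ we simply stop, with the first outcome.
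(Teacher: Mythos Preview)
The paper does not prove Theorem~\ref{connectifier1}; it is quoted from \cite{TWVII} (``The following is proved in \cite{TWVII} based on (and refining) a result from \cite{Davies}''), so there is no proof here against which to compare your attempt.

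Evaluated on its own merits, your proposal is a plausible strategic outline but not a proof. The opening reductions are sound: passing to a stable subset of $S$ via Ramsey, replacing $G$ by an inclusion-minimal connected $F$ with $S\subseteq V(F)$, and deducing that every vertex of $F\setminus S$ is a cutvertex and every leaf block of $F$ is an edge. The subdivided-star case (many components of $F\setminus y$ meeting $S$) is also fine.

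The genuine gap is exactly where you flag it. Your inductive step ``recurse with target $h-1$ and append $x$'' requires a strengthened inductive hypothesis that guarantees a designated attachment vertex in the returned structure \emph{of the correct local type}: a degree-$2$ spine vertex to hang a new pendant (caterpillar), a spine edge to extend into a triangle (line graph of a caterpillar), or the root (subdivided star). You assert that ``one proves a strengthened statement that additionally tracks a `root' vertex,'' but you never state it, and formulating one that simultaneously handles all three output shapes while keeping $x$'s single edge $xy$ compatible with each is the crux. In particular, if the recursion returns the line graph of a caterpillar, appending the pendant edge $xy$ does not create a triangle, so you cannot stay in that outcome without further work; and if it returns a subdivided star, $y$ need not be the root. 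Your alternative ``spread out'' analysis (build a spine $P$, control attachments, invoke Erd\H{o}s--Szekeres and Lemma~\ref{digraph}) is the right picture, but the assertion that two non-adjacent neighbours or three neighbours of a peeled $s$ on $P$ permit a shortening re-route of $P$ is not justified---$P$ lives in $F$, not in $G$, and your extremal choice of $P$ has not been specified, so there is nothing yet for such a re-route to contradict. As written, the proposal identifies the architecture of a correct argument but does not supply the invariant that makes the induction close.
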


Let $H$ be a graph that is either a path, or a caterpillar, or the line graph of a caterpillar,  or a subdivided star with root $r$.
We define an induced subgraph of $H$, denoted by $P(H)$, which we will use
throughout the paper.
If $H$ is a path let $P(H)=H$.
If $H$ is a caterpillar, let $P(H)$ be the spine of $H$.
  If $H$ is the line graph of a caterpillar $C$, let $P(H)$ be the path of $H$
  consisting of the vertices of $H$ that correspond to the edges of the spine of $C$. If $H$ is a subdivided claw with root $r$, let $P(H)=\{r\}$.
  The {\em legs} of $H$ are the components of $H \setminus P$.
Our first goal is to prove the following variant of Theorem \ref{connectifier1}:

\begin{theorem}
  \label{connectifier2}
  For every integer $h'\geq 1$, there exists an integer $\nu=\nu(h')\geq 1$ with the following property. Let $G$ be a connected graph with no clique of cardinality $h'$. Let $S' \subseteq G$ such that  $|S'|\geq \nu$, $G \setminus S'$ is connected  and every vertex of $S'$ has a neighbor in $G \setminus S'$.
  Then  there is a set $\tilde S \subseteq S'$ with $|\tilde S|=h'$ and an induced subgraph $H'$ of $G \setminus S'$ for which one of the following holds.
  \begin{itemize}
    \item $H'$ is a path and every vertex of $\tilde S$ has a neighbor in $H'$; or    \item $H'$ is a caterpillar, or the line graph of a caterpillar, or a subdivided star. Moreover, every vertex of $\tilde S$ has a unique neighbor in $H'$ and  
      $H'\cap N(\tilde S)=\mathcal{Z}(H')$.
          \end{itemize}
\end{theorem}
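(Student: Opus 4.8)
The plan is to reduce to Theorem~\ref{connectifier1}, applied not to $G\setminus S'$ but to $G$ itself with marked set $S'$ (the cases $h'\le 2$ being trivial). Note first that $G$ is connected: $G\setminus S'$ is connected, and every vertex of $S'$ has a neighbour in $G\setminus S'$. Fix $h=h(h')$ large (say $h=3h'+3$), take $\nu$ a suitably large function of $h'$, in particular $\nu\ge \mu(h)$, and apply Theorem~\ref{connectifier1} to $G$ with $S=S'$ and parameter $h$ (legitimate since $G$ has no clique of cardinality $h'\le h$). We obtain either a path $P$ in $G$ with $|P\cap S'|\ge h$, or an induced subgraph $H^\ast$ of $G$ with $|H^\ast\cap S'|=h$ which is a caterpillar, a line graph of a caterpillar, or a subdivided star, with the marked vertices $H^\ast\cap S'$ lying among $\mathcal{Z}(H^\ast)$. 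The reason to mark $S'$ directly, rather than a set of their neighbours in $G\setminus S'$, is that a simplicial vertex of $H^\ast$ has all its $H^\ast$-neighbours in a clique, hence exactly one neighbour in $H^\ast$ when $H^\ast$ is a tree and at most two when $H^\ast$ is a line graph of a caterpillar. So every $v\in H^\ast\cap S'$ has at most two neighbours in $H^\ast$, and therefore $N_G(v)\cap H' = N_{H^\ast}(v)\cap H'$ for every induced subgraph $H'\subseteq H^\ast$. This is what makes the ``unique neighbour'' requirement tractable: it suffices to choose $H'\subseteq H^\ast$ containing precisely one $H^\ast$-neighbour of each selected $v$, and to arrange that this neighbour is simplicial in $H'$.

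In the cleanest case, where $H^\ast$ is a caterpillar, we have $H^\ast\cap S' = \mathcal{Z}(H^\ast)$, the set of leaves of $H^\ast$, so every non-leaf of $H^\ast$ lies in $G\setminus S'$. If at least $h'$ leaves sit at the end of a leg of length at least $2$, let $\tilde{S}$ consist of $h'$ of them, and let $H'$ be the sub-path of the spine spanned by the attachment points of the chosen legs, together with those legs with their leaf-ends deleted; then $H'\subseteq G\setminus S'$ is a caterpillar, each $v\in\tilde{S}$ has its unique $H^\ast$-neighbour $f(v)$ in $H'$ and no other neighbour there, and $\mathcal{Z}(H') = \{f(v):v\in\tilde{S}\} = N(\tilde{S})\cap H'$, which is the second bullet. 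If instead at least $h'$ leaves lie at distance $1$ from the spine (or are spine-endpoints), each is adjacent to a single spine vertex of $H^\ast$ lying in $G\setminus S'$, so a sub-path of the spine serves as $H'$ for the first bullet. The line-graph and subdivided-star cases are treated the same way, distinguishing on the degree in the underlying caterpillar (resp.\ the arm length), with the degree-$3$ (resp.\ length-$1$) vertices steering us towards the first bullet via the path $P(H^\ast)$ (resp.\ the single vertex $r$).

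The genuinely delicate points, which I expect to be the bulk of the work, are the path outcome of Theorem~\ref{connectifier1} above and the subdivided-star outcome whose root lies in $S'$ (so that $H^\ast\setminus S'$ may be disconnected). In both we end up with at least $h$ vertices of $S'$, each carrying a distinguished neighbour in $G\setminus S'$, but with no ready-made connected structure inside $G\setminus S'$. The remedy is to pass to $G\setminus S'$, which is connected, and re-apply Theorem~\ref{connectifier1} to a suitable set of these neighbours; but now the marked vertices are ordinary vertices of $G\setminus S'$, so the ``at most two neighbours'' phenomenon is lost and a selected $v\in S'$ may have many neighbours in the resulting structure. This is where Theorem~\ref{ESz} and Lemma~\ref{digraph} enter: one orders the legs or arms of the structure, applies Erd\H{o}s--Szekeres to pass to a sub-collection along which each $v$'s set of ``interfering'' neighbours behaves monotonically, and uses the digraph lemma to pass further to a large sub-collection on which this interference is bounded, so that the structure can be thinned — legs truncated, branches deleted — to leave each selected $v$ with a single simplicial neighbour. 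The main obstacle is precisely this combined case analysis and thinning, together with the bookkeeping needed to obtain $N(\tilde{S})\cap H' = \mathcal{Z}(H')$ on the nose rather than up to a bounded error.
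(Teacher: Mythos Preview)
Your approach is genuinely different from the paper's. The paper never applies Theorem~\ref{connectifier1} to $G$: it works entirely inside $G':=G\setminus S'$, with marked set the projected neighbours $\{n(s):s\in S'\}$. The key device is to assume at the outset that the first bullet of the theorem being proved fails, so that every path $Q\subseteq G'$ satisfies $|N(Q)\cap S'|<h'$. This single inequality does all the work: it bounds the fibres of $s\mapsto n(s)$ (so the projected marked set is large enough to invoke Theorem~\ref{connectifier1}), it kills the path outcome of Theorem~\ref{connectifier1} in $G'$, it lets one discard the few $S'$-vertices with a neighbour on the spine, and it bounds by $h'-1$ the out-degree of the leg-digraph (arc $D_1\to D_2$ when the $S'$-vertex attached to $D_2$ has a neighbour in the path $D_1$), so that Lemma~\ref{digraph} applies immediately. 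There is no case analysis on the shape of $H$, and Erd\H{o}s--Szekeres is not used at all in this proof; it enters only later, in Theorem~\ref{twosides}.

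Your clean cases (caterpillar, line graph, star with root outside $S'$) are handled correctly and exploit a pleasant fact the paper does not use, namely that when the marked vertices are themselves simplicial in $H^\ast$ their $H^\ast$-neighbourhoods are automatically tiny. The gap is in the delicate cases. After your re-application of Theorem~\ref{connectifier1} inside $G\setminus S'$, you must bound how many of the selected $S'$-vertices can have a neighbour in any given leg, since that is the out-degree hypothesis Lemma~\ref{digraph} needs; invoking Erd\H{o}s--Szekeres for ``monotone interference'' does not yield such a bound (monotonicity of an interval endpoint along a subsequence says nothing about how many intervals cover a fixed leg). What does yield it is precisely the paper's dichotomy: either some path of $G\setminus S'$ (a leg or the spine) already has $h'$ of your $S'$-vertices as neighbours and the first bullet holds, or no such path does and every leg has out-degree below $h'$. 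Once you insert this observation, your second application becomes the paper's single application and the preliminary pass through $G$ is superfluous.
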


\begin{proof}
    Write $G' := G \setminus S'$, let $h = 1 + h' + 2h'^2$, and let $\nu = h' \mu(h)$, where $\mu$ is as in Theorem~\ref{connectifier1}. Assume that the first bullet point above does not hold, that is:
    
    \sta{\label{pathneighbors} For every path $Q$ of $G'$, $|N(Q) \cap S'| \leq h' - 1$.}   

    Now choose, for every $s \in S'$, a neighbor $n(s)$ of $s$ in $G'$, and let $S := \{n(s) : s \in S'\}$. By \eqref{pathneighbors}, $|n^{-1}(v)| \leq h' - 1$ for all $v \in G'$ (and in particular for all $v \in S$), and so $|S| \geq |S'|/h' = \mu(h)$. Let $S'' \subseteq S'$ be minimal such that $S = \{n(s) : s \in S''\}$. It follows that $n$ is a bijection between $S''$ and $S$.%, and in particular, $|S''| \geq \mu(h)$.%, and that each $v \in S$ equals $n(s)$ for a unique $s \in S''$ (that is, $n$ is a bijection between $S$ and $S''$).    

    We now apply Theorem~\ref{connectifier1} to $G', S$ and $h$. By \eqref{pathneighbors}, the path outcome of the theorem does not happen, so there is an induced subgraph $H$ of $G'$ with $|H \cap S| = h$, for which one of the following holds:

    \begin{itemize}
        \item $H$ is either a caterpillar or the line graph of a caterpillar with $H \cap S = \mathcal Z(H)$.

        \item $H$ is a subdivided star with root $r$ such that $\mathcal Z(H) \subseteq H \cap S \subseteq \mathcal Z(H) \cup \{r\}$. 
    \end{itemize}

    Let $P := P(H)$, and let $A := S'' \cap n^{-1}(\mathcal Z(H) \setminus P)$; in other words, $A$ consists of the vertices of $S''$ whose selected neighbors are simplicial vertices of $H$ other than the endpoint(s) of $P$. In particular, $|A| \geq h - 2 = 2h'^2 + h' - 1$. 
    Moreover, for each $v \in A$, $n(v)$ belongs to a unique leg of $H$, which yields a bijective correspondence between $A$ and the set of legs of $H$ (henceforth, if $n(v) \in D$ for a vertex $v \in A$ and a leg $D$ of $H$, we will say that $v$ {\em corresponds} to $D$, and vice-versa).  

    Let $A'' \subseteq A$ be the set of vertices of $A$ anticomplete to $P$. We note that, by \eqref{pathneighbors}, $|A''| \geq |A| - (h' - 1) \geq 2h'^2$. Let $H''$ be obtained from $H$ by removing all legs corresponding to vertices in $A \setminus A''$. In particular, we note that $H''$ is still a caterpillar, line graph of a caterpillar, or star, according to what $H$ was, and that the vertices of $A''$ correspond bijectively to the legs of $H''$.
    
    Define now a directed graph $F$ as follows. $V(F)$ is the set of legs of $H''$, and we have an arc from $D_1 \in F$ to $D_2 \in F$ if the vertex $s_2$ corresponding to $D_2$ has a neighbor in $D_1$. By \eqref{pathneighbors}, every vertex of $F$ has at most $h' - 1$ outneighbours, and so by Lemma~\ref{digraph}, the underlying undirected graph of $F$ contains a stable set $T$ of size $\frac{|V(F)|}{2(h' - 1) + 1} \geq \frac{2h'^2}{2h' - 1} \geq h'$. Let $\tilde{S}$ be the set corresponding to the legs in $T$, and let $H'$ be obtained from $H''$ by deleting the legs not in $T$. It is routine to check that $H'$ and $\tilde{S}$ satisfy the second outcome of Theorem~\ref{connectifier2}.
\end{proof}

Next we introduce more terminology.
Let $G$ be a graph, let $P=p_1 \dd \dots \dd p_n$ be a path of $G$ and let
$X=\{x_1, \dots, x_k\}  \subseteq G \setminus P$. We say that $(P,X)$ is an
{\em alignment} if 
$N_P(x_1)=\{p_1\}$, $N_P(x_k)=\{p_n\}$, 
every vertex of $X$  has a neighbor in $P$, and there exist
$1< j_2 < \dots< j_{k-1} < j_k =  n$ such that
$N_P(x_i) \subseteq p_{j_i} \dd P \dd p_{j_{i+1}-1}$ for $i \in \{2, \dots, k-1\}$.
We also say that $x_1, \dots, x_k$ is {\em the order on $X$ given by the
  alignment $(P,X)$}.
An alignment $(P,X)$ is {\em wide}  if 
each of $x_2, \dots, x_{k-1}$ has two non-adjacent neighbors in  $P$,
{\em spiky} if  each of $x_2, \dots, x_{k-1}$ has a unique neighbor in $P$ and
{\em triangular} if 
each of $x_2, \dots, x_{k-1}$ has exactly two neighbors in $P$  and they are
adjacent. An alignment is {\em consistent} if it is
wide, spiky or triangular.
Next, let $H$ be a caterpillar or the line graph of a caterpillar 
and let $S$ be a set of vertices disjoint from $H$ such that
every vertex of $S$ has a unique neighbor in $H$ and  
      $H\cap N(S)=\mathcal{Z}(S)$.
Let $X$ be the set of vertices of $H \setminus P(H)$ that have neighbors in
$P(H)$. Then the neighbors of elements of $X$ appear in $P(H)$ in order (in fact, $(X, Q)$ is an alignment for some subpath $Q$ of $P(H))$;
let $x_1, \dots, x_k$ be the corresponding order
on $X$. Now, order the vertices of $S$ as
$s_0, s_1, \dots, s_k, s_{k+1}$ where $s_i$ has a neighbor in the leg of $H$ containing
$x_i$ for $i \in \{1, \dots, k\}$, and $s_0, s_{k+1}$ are the ends of $P(H)$ in this order. We say that $s_0, s_1, \dots, s_k, s_{k+1}$ is {\em the order on $S$ given by $(H,S)$}.

Next, let  $H$ be   an induced subgraph of $G$ that is  a caterpillar, or the line graph of a caterpillar, or a subdivided star and let $X \subseteq G \setminus H$ be such that every vertex of $X$ has a unique neighbor in $H$ and  $H \cap N(X)=\mathcal{Z}(H)$. We say that $(H,X)$ is a {\em consistent connectifier} and it is  {\em spiky, triangular, or stellar} respectively.

Our next goal is, starting with a graph $G \in \mathcal{C}_t$ and a
stable set $X \subseteq V(G)$, to produce certain consistent connectifiers. We start with a lemma.

\begin{lemma}
  \label{twosides_lemma}
    Let $G \in \mathcal{C}$ and assume that $V(G)=H_1 \cup H_2 \cup X$
    where $X$ is a stable set with $|X| \geq 3$ and $X \cap \Hub(G)=\emptyset$.
    Suppose that  for $i \in \{1,2\}$, the pair $(H_i,X)$ is a consistent alignment, or a consistent connectifier.
    Assume also that  if neither of $(H_1,X),(H_2,X)$ is stellar,
    then
  the orders given on $X$ by $(H_1,X)$ and by $(H_2,X)$ are the same.
  Then (possibly switching the roles of $H_1$ and $H_2$), we have that:
  \begin{itemize}
\item
  $(H_1,X)$ is a triangular alignment or a triangular connectifier; and
\item $(H_2,X)$ is a spiky connectifier, a stellar connectifier, a spiky alignment or a wide alignment.
  \end{itemize}
  Moreover, if $(H_1,X)$ is a triangular alignment, then $(H_2,X)$ is not a
  wide alignment.
  \end{lemma}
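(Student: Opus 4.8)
The plan is a case analysis on the unordered pair of types of $(H_1,X)$ and $(H_2,X)$ — the types being wide, spiky and triangular alignments and spiky, triangular and stellar connectifiers. For each pair that does not occur in the conclusion I will exhibit one of the excluded induced subgraphs — a $C_4$, a theta, a prism or an even wheel — or else a proper wheel with centre in $X$, which is impossible because $X\cap\Hub(G)=\emptyset$. Throughout I will use that $X$ is stable, so every $x\in X$ has all of its neighbours in $H_1\cup H_2$, and that $H_1$ is anticomplete to $H_2$, so that the structures built below are automatically induced; the type of $(H_i,X)$ pins down $N(x)\cap H_i$ exactly (two non-adjacent vertices, two adjacent vertices, one vertex of a path, or one leaf of a caterpillar / line graph of a caterpillar). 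Since $|X|\ge 3$ there is always an \emph{interior} vertex $x_i$: when neither side is stellar the two types induce the same order $x_1,\dots,x_k$ on $X$ and I take $1<i<k$; when a stellar side is present I work with the root $r$ of the corresponding subdivided star in place of the long hole below.

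The first step handles all pairs in which both sides are wide or triangular alignments. Assume neither side is stellar, and inside $H_i$ choose an induced path $Q_i$ from the attachment point of $x_1$ to that of $x_k$ (this is $H_i$ itself when $(H_i,X)$ is an alignment, and otherwise the spine-path prolonged to the two attachment points, which is again an induced path because $(H_i,X)$ is consistent). Then $H_0:=x_1\cup Q_1\cup x_k\cup Q_2$ is a hole — there are no chords, using $H_1\cap H_2=\emptyset$ and anticompleteness. An interior vertex $x_i$ has on $H_0$ exactly two neighbours if its side is a wide or triangular alignment and exactly one if its side is a spiky alignment or any connectifier, and \emph{all} of $N(x_i)$ lies on $H_0$; hence $|N(x_i)\cap H_0|\in\{2,3,4\}$. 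If it equals $4$, which is precisely when both sides are wide or triangular alignments, then $(H_0,x_i)$ is an even wheel, a contradiction. In particular two triangular sides can never both be alignments, and a triangular alignment is never paired with a wide alignment.

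For the remaining forbidden pairs I build a theta or a prism from three paths leaving $x_i$ in three directions: directly through $x_i$, through the left end $x_1$ together with the left parts of $H_1$ and $H_2$, and through the right end $x_k$ together with the right parts. If each side gives $x_i$ a single neighbour and neither of those neighbours lies in a triangle of its $H_i$ — which, for interior $x_i$, excludes exactly the triangular connectifier — then, taking suitable ends (the two neighbours of $x_i$, or a neighbour on one side together with the spine vertex or root carrying the neighbour on the other), these three paths form a theta; this disposes of every pair among spiky alignments, spiky connectifiers and stellar connectifiers. If one side is a wide alignment, then the three neighbours of $x_i$ are pairwise non-adjacent (two by the definition of ``wide'', the third by anticompleteness), and when the other side is a spiky alignment, a spiky connectifier or a stellar connectifier there is a hole through these three neighbours avoiding $x_i$ (the hole $H_0$, or a hole built from the spine or root), so $x_i\in\Hub(G)$, a contradiction; this rules out wide alignment paired with spiky alignment, with spiky connectifier, and with stellar connectifier. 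The two remaining forbidden pairs, triangular alignment with triangular connectifier and triangular connectifier with triangular connectifier, yield a prism: a triangular-alignment side contributes the triangle $\{x_i,a,a'\}$ on $x_i$'s two adjacent neighbours, a triangular-connectifier side contributes the triangle of the line graph at the branch vertex carrying $x_i$'s neighbour, and the two triangles are joined by the path through $x_i$ and by two paths running along $H_1$ and $H_2$ past the left and right ends. Crucially, an interior $x_i$ can never be attached to an end of a spine-path (it would then be an end of the order), so its connectifier-neighbour is always a leg at a branch vertex and hence, for a triangular connectifier, lies in a triangle; it is precisely this triangle that blocks the previous two arguments, which is why a triangular connectifier paired with a wide alignment, a spiky alignment, a spiky connectifier or a stellar connectifier is \emph{not} excluded.

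The main obstacle I anticipate is the bulk of the case analysis together with the bookkeeping needed to certify that every configuration produced is induced — which is where anticompleteness of $H_1$ and $H_2$, the precise neighbourhood conditions built into ``consistent'', and the agreement of the two orders on $X$ are all used. Two points need particular care: in the connectifier cases, deciding whether the unique neighbour of an $X$-vertex in $H_i$ does or does not lie in a triangle of $H_i$, since this is exactly what swaps a theta for a prism and an allowed pair for a forbidden one; and the stellar case, where $X$ carries no linear order, at most one vertex of $X$ meets the root, and when $|X|=3$ that exceptional vertex may be the only natural ``middle'' vertex, so the theta argument must instead be run from a leaf-attached vertex of $X$ and re-checked.
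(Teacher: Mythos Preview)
Your overall plan matches the paper's: build the hole through $x_1$ and $x_k$, pick an interior $x_i$, and read off a forbidden configuration from how $x_i$ attaches on each side. The paper streamlines this into essentially two cases (at least one side triangular versus neither side triangular) and treats the legs $D_i^j$ uniformly, rather than your finer split, but the underlying computations are the same.

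There is, however, a genuine gap. When $(H_j,X)$ is a non-stellar connectifier, the unique neighbour of an interior $x_i$ in $H_j$ is an off-spine simplicial vertex, and that vertex does \emph{not} lie on your path $Q_j$ (which is the spine). So a connectifier side contributes \emph{zero} neighbours of $x_i$ to $H_0$, not one; your formula $|N(x_i)\cap H_0|\in\{2,3,4\}$ and the claim that all of $N(x_i)$ lies on $H_0$ both fail once a connectifier is present. This does not damage your even-wheel step (where both sides are alignments anyway), but it does break your hub argument for a wide alignment paired with a spiky or stellar connectifier. You assert there is a hole through all three neighbours of $x_i$ that avoids $x_i$; this is impossible, because the connectifier-side neighbour $c$ has degree one in $G\setminus x_i$ (its only other neighbour is the next vertex along its leg, or the root), so no hole through $c$ can miss $x_i$.

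The repair is actually simpler than what you attempted. Since the connectifier side contributes nothing to $H_0$, the only neighbours of $x_i$ on $H_0$ are the wide-side ones, and these include a non-adjacent pair; hence $H_0\cup\{x_i\}$ already yields a theta (or, if there are more, an even or proper wheel with centre $x_i$). This is exactly the paper's uniform argument for the ``neither side triangular'' case: one looks at $H\cup D_i^1\cup D_i^2\cup\{x_i\}$ and finds a theta or a proper wheel centred at $x_i$ regardless of which non-triangular types the two sides carry. Incidentally, in the stellar case no vertex of $X$ is attached to the root (the root of a subdivided star with at least three leaves is not simplicial), so the special worry in your final paragraph does not arise.
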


\begin{proof}
  If at least one $(H_i,X) \subseteq \{(H_1,X), (H_2,X)\}$
  is not a stellar connectifier, we
  let $x_1, \dots, x_k$ be the order given on $X$ by $(H_i,X)$.
  If both of $(H_i,X)$ are  stellar connectifiers, we let $x_1, \dots, x_k$ be an arbitrary order on $X$.
  Let $H$ be the unique hole contained in $H_1 \cup H_2 \cup \{x_1,x_k\}$.
  For $j \in \{1,2\}$ and $i \in \{1, \dots, k\}$, if
  $H_j$ is a connectifier, let
  $D_i^j$ be the leg of $H_j$ containing a neighbor of $x_i$; and
  if $H_j$ is an alignment let $D_i^j=\emptyset$.

  Suppose first that  $(H_1,X)$ is a triangular alignment or a triangular connectifier.
  If $(H_2,X)$ is a triangular alignment or a triangular connectifier, then 
   for every $i \in \{2, \dots, k-1\}$, the graph 
  $H \cup D_i^1 \cup D_i^2 \cup \{x_i\}$ is either a prism or an even wheel with center $x_i$, a contradiction.
  This proves that $(H_2,X)$ is a stellar connectifier, or  a spiky connectifier, or 
  a spiky alignment, or a wide alignment. We may assume that
  $(H_1,X)$ is a triangular alignment and $(H_2,X)$ is a wide alignment, for otherwise the theorem holds. But now
   for every $x \in X \setminus \{x_1,x_k\}$,
  $(H,x)$ is a proper wheel, a contradiction. 

  Thus we may assume that for $i \in \{1,2\}$, the pair 
  $(H_i, X)$ is a a stellar connectifier, or  a spiky connectifier, or 
  a spiky alignment, or a wide alignment.
  Now for every
  $x_i \in X \setminus \{x_1,x_k\}$, the graph
  $H \cup D_i^1 \cup D_i^2 \cup \{x_i\}$ is either a theta  or a proper wheel with center $x$, a contradiction.
This proves Lemma \ref{twosides_lemma}.
  \end{proof}

We now prove the main result of this section, which is a refinement of
 Theorem \ref{connectifier2} for graphs in $\mathcal{C}$.

\begin{theorem}
\label{twosides}
  For every integer $x\geq 1$, there exists an integer $\tau=\tau(x) \geq 1$ with the following property.
    Let $G \in \mathcal{C}_x$ and assume that $V(G)=D_1 \cup D_2 \cup Y$
  where
  \begin{itemize}
  \item         $Y$ is a stable set with $|Y| = \tau$,
  \item $D_1$ and $D_2$ are components of $G \setminus Y$, and 
  \item $N(D_1)=N(D_2)=Y$.
  \end{itemize}
  Assume that $Y \cap \Hub(G)=\emptyset$.
  Then there exist $X \subseteq Y$ with $|X|=x$,
$H_1 \subseteq D_1$ and $H_2 \subseteq D_2$
   (possibly with the roles of $D_1$ and $D_2$ reversed) such that
  \begin{itemize}
    \item   $(H_1,X)$ is  a triangular connectifier or a triangular alignment;
  
  \item   $(H_2,X)$ is a stellar connectifier, or a spiky connectifier, or a
    spiky alignment or a wide alignment; and
    \item  if $(H_1,X)$ is a triangular alignment, then $(H_2,X)$ is not a
  wide alignment.
      \end{itemize}
  Moreover, if neither of $(H_1,X),(H_2,X)$ is a stellar connectifier, then
  the orders given on $X$ by $(H_1,X)$ and by $(H_2,X)$ are the same.
  \end{theorem}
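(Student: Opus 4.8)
The plan is to apply Theorem~\ref{connectifier2} twice: once to the pair $(G[D_1\cup Y],Y)$ (with $Y$ in the role of $S'$), and then to the pair $(G[D_2\cup \tilde Y_1],\tilde Y_1)$, where $\tilde Y_1\subseteq Y$ is the set produced by the first application. This yields a connectifier-type structure inside each of $D_1$ and $D_2$ on a common large subset of $Y$, which we then match up and feed into Lemma~\ref{twosides_lemma}. Both pairs satisfy the hypotheses of Theorem~\ref{connectifier2}: $D_i$ is connected and equals $(G[D_i\cup Y])\setminus Y$, every vertex of $Y$ (hence of $\tilde Y_1$) has a neighbour in $D_i$ because $N(D_i)=Y$, and $G$, hence every induced subgraph, has no clique of size $x$. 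So one fixes $\tau=\tau(x)$ backwards: it will be chosen so large that after the first application with a suitably large target $h_1$, then the second with target $h_2$ (with $h_1\ge \nu(h_2)$ and $\tau\ge\nu(h_1)$), and then all the reductions below, we are still left with a subset of $Y$ of size at least, say, $(x-1)^2+3$. We may assume $x\ge 3$, since for $x\le 2$ one applies the statement with $x=3$ and discards vertices (restricting an alignment or connectifier to a sub-collection preserves its type).

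The first subtlety is that Theorem~\ref{connectifier2} may return its \emph{path} outcome, which is weaker than a consistent alignment: we only get a path $H$ in $D_i$ on which every vertex of $\tilde Y$ has a neighbour. This has to be upgraded, on a still-large subset, to a consistent alignment, and this is the technical heart. The approach is to sort the vertices of $\tilde Y$ by the leftmost endpoint of their neighbourhood interval on $H$, apply Erd\H{o}s--Szekeres (Theorem~\ref{ESz}) so that the rightmost endpoints become monotone as well, and then argue that the surviving neighbourhood intervals may be taken pairwise disjoint and consecutive: the point is that if many of these intervals pairwise overlapped (equivalently, if one point of $H$ lay in too many of them) then, using that $D_{3-i}$ is connected with $N(D_{3-i})=Y$ to route a path between two such vertices of $Y$ through $D_{3-i}$, one would build a hole $H'$ on which some vertex $y\in Y$ has either two non-adjacent neighbours together with a third neighbour (a proper wheel) or on which a theta appears, contradicting $G\in\mathcal{C}$ together with $Y\cap\Hub(G)=\emptyset$. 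Pairwise disjoint consecutive neighbourhood intervals are exactly an alignment; partitioning its internal vertices according to whether they have one neighbour on the path, two adjacent neighbours, or two non-adjacent neighbours, and keeping a majority class, makes it consistent (spiky, triangular, or wide). One must also treat the degenerate case where the path returned is a single vertex $p$: then $\tilde Y\subseteq N(p)$ and $(\{p\},\tilde Y)$ is a consistent connectifier with no legs (hence no induced order to respect), which suffices below. If instead Theorem~\ref{connectifier2} returns its connectifier outcome, the pair is already a consistent connectifier — a caterpillar gives a spiky one, the line graph of a caterpillar a triangular one, a subdivided star a stellar one — and passing from such a connectifier on $\tilde Y_1$ to the smaller set $\tilde Y_2$ delivered by the second application is routine book-keeping (delete the legs corresponding to $\tilde Y_1\setminus\tilde Y_2$, trimming the spine if an end is lost), preserving the type.

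After these reductions we have a large common set $Z\subseteq Y$ and, for $i\in\{1,2\}$, a consistent alignment or consistent connectifier $(H_i,Z)$ with $H_i\subseteq D_i$. If at least one of them is a stellar connectifier there is no order constraint to meet, so we delete legs and vertices to cut $Z$ down to a set $X$ of size exactly $x$ and restrict both structures to $X$. Otherwise each $(H_i,Z)$ induces a linear order on $Z$; viewing the second order as a permutation of the first and applying Erd\H{o}s--Szekeres (Theorem~\ref{ESz}) once more, we obtain $X\subseteq Z$ with $|X|=x$ on which the two orders either coincide or are exactly reversed, and in the reversed case we reverse the spine (resp.\ the path) of $(H_2,Z)$, turning it into an alignment/connectifier of the same type whose induced order is the reversal; restricting both structures to $X$ then makes the two induced orders on $X$ agree.

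Finally we apply Lemma~\ref{twosides_lemma} to $G^{\ast}:=G[H_1\cup H_2\cup X]$. Its hypotheses hold: $G^{\ast}\in\mathcal{C}$ since $\mathcal{C}$ is hereditary and $G\in\mathcal{C}_x\subseteq\mathcal{C}$; $V(G^{\ast})=H_1\cup H_2\cup X$ is the required partition because $H_1\subseteq D_1$, $H_2\subseteq D_2$ and $X\subseteq Y$ are pairwise disjoint and, since $D_1,D_2$ are distinct components of $G\setminus Y$, $H_1$ is anticomplete to $H_2$; $X\subseteq Y$ is a stable set with $|X|=x\ge 3$; $X\cap\Hub(G^{\ast})=\emptyset$ because a proper wheel of $G^{\ast}$ centred at a vertex of $X$ would be an induced proper wheel of $G$, contradicting $Y\cap\Hub(G)=\emptyset$; for $i\in\{1,2\}$ the pair $(H_i,X)$ is a consistent alignment or consistent connectifier by construction; and if neither is a stellar connectifier the two induced orders on $X$ agree by the previous paragraph. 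Lemma~\ref{twosides_lemma} then gives exactly the conclusion of Theorem~\ref{twosides} (possibly after swapping $H_1$ and $H_2$, which corresponds to swapping $D_1$ and $D_2$). The main obstacle is the upgrade of the path outcome to a consistent alignment: it is the one place where the structure of $\mathcal{C}$ and the absence of hubs in $Y$ must be exploited, rather than the purely extremal arguments (Erd\H{o}s--Szekeres, pigeonhole, interval/degeneracy counting) used in the rest of the proof.
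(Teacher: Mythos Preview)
Your proposal is correct and follows essentially the same approach as the paper: two applications of Theorem~\ref{connectifier2}, an upgrade of the path outcome to a consistent alignment using the forbidden structures in $\mathcal{C}$ together with $Y\cap\Hub(G)=\emptyset$, an Erd\H{o}s--Szekeres step to synchronise the two linear orders, and finally Lemma~\ref{twosides_lemma}. The only difference is tactical, in the upgrade step: the paper first partitions $Z$ by neighbourhood type (one neighbour / two adjacent / two non-adjacent) and then, in the wide case, bounds $|N_{H_i}(z)|\le 3$ via a proper-wheel argument before greedily thinning with the ``$Bad(z)$'' sets and ruling out interleaving via a theta; you instead thin first (sort by left endpoint, Erd\H{o}s--Szekeres on right endpoints, then a point-degree bound) and partition by type at the end. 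Both routes work, but your overlap argument needs exactly the two ingredients the paper isolates: a vertex of $H$ with many $Y$-neighbours is the degenerate ``stellar'' case you describe, and once common neighbours are eliminated, overlap of spans is really \emph{interleaving}, which the paper kills with a theta through $D_{3-i}$ rather than a wheel.
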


\begin{proof}
  Let $z=(27)^2 \cdot 36 x^4$ and let $\tau(x)=\nu(\nu(z))$, where $\nu$ is as in Theorem \ref{connectifier2}.
  Applying Theorem \ref{connectifier2} twice, we obtain a set $Z \subseteq Y$ with $|Z|=z$
  and $H_i \subseteq D_i$ such that either 
   \begin{itemize}
    \item $H_i$ is a path and every vertex of $Z$ has a neighbor in $H_i$; or
    \item $(H_i,X)$ is  a consistent connectifier
        \end{itemize}
        for every $i \in \{1, 2\}$.

   \sta{\label{align} Let $i \in \{1, 2\}$ and $y \in \mathbb N$. If $H_i$ is a path and every vertex of $Z$ has a neighbor in $H_i$, then either some vertex of $H_i$ has $y$ neighbors in $Z$, or
     there exists $Z' \subseteq Z$ with $|Z'| \geq \frac{|Z|} {27y}$ and a  subpath $H_i'$ of $H_i$ such that $(H_i,Z')$ is a consistent alignment.}

Let $H_i=h_1 \dd \dots \dd h_k$. We may assume that $H_i$ is chosen  minimal satisfying Theorem \ref{connectifier2}, and so there exist
   $z_1,z_k \in Z$ such that $N_{H_i}(z_j)=\{h_j\}$ for $j \in \{1,k\}$. 

   We may assume that $|N_{Z}(h)| <y$ for every $h \in H_i$. Let $Z_1$ be the set of vertices in $Z$ with exactly one neighbor in $H_i$. Then, if $|Z_1| \geq |Z|/3$, it follows that $Z_1$ contains a set $Z'$ with $|Z'| \geq |Z_1|/y \geq |Z|/(3y)$ such that no two vertices in $Z'$ have a common neighbor in $H_i$. We may assume that $z_1, z_k \in Z'$. Therefore, $(H_i, Z')$ is a spiky alignment. 

   Next, let $Z_2$ be the set of vertices in $z \in Z$ such that either $z \in \{z_1, z_k\}$ or  has exactly two neighbors in $H_i$, and they are adjacent. Now, if $|Z_2| \geq |Z|/3$, by choosing $Z'$ greedily, it follows that $Z_2$ contains a subset $Z'$ with the following specifications:
   \begin{itemize}
       \item $z_1, z_k \in Z'$; 
       \item $|Z'| \geq |Z_2|/(2y) \geq |Z|/(6y)$; and
       \item no two vertices in $Z'$ have a common neighbor in $H_i$. 
   \end{itemize}
   But then $(H_i, Z')$ is a triangular alignment, as desired. 

    Let $Z_3 = \{z_1, z_k\} \cup (Z \setminus (Z_1 \cup Z_2))$. From the previous two paragraphs, we may assume that $|Z_3| \geq |Z|/3$.   Let $R$ be a path from $z_1$ to $z_k$ with $R^* \subseteq H_{3-i}$,
   and let $H$ be the hole $z_1 \dd H_i \dd z_k \dd R \dd z_1$.
   If some $z \in Z \setminus \{z_1,z_k\}$ has at least four neighbors in $H_i$,
   then $(H,z)$ is a proper wheel in $G$, a contradiction. This proves that
   $|N_{H_i}(z)| \leq 3$ for every $z \in Z$.

   Let $z \in Z$. Define $Bad(z)=N_{H_i}[N_{H_i}(z)]$. Since $|N_{H_i}(z)| \leq 3$
   and $H_i$ is a path, it follows that    $|Bad(z)| \leq 9$.  
Since  $N_{Z}(h) <y$ for every $h \in H_i$,  we can greedily choose
   $Z' \subseteq Z$ with     $|Z'| \geq \frac{|Z_3|}{9y} \geq \frac{|Z|}{27y}$, $z_1,z_k \in Z'$ and  such that
   if $z, z' \in Z'$, then $z'$ is anticomplete to $Bad(z)$. 

   We claim that $(H_i,Z')$ is an alignment. Suppose not; then there exist $i < j < k$ such that $h_i, h_k \in N(z)$ and $h_j \in N(z')$ for $z, z' \in Z'$ with $z \neq z'$. We may assume that $i, j, k$ are chosen with $|k-i|$ minimum. It follows that $z$ has no neighbor in $\{h_{i+1}, \dots, h_{k-1}\}$. We consider three cases: 
   \begin{itemize}
       \item If $z'$ has a neighbor $h_l$ with $l > k$, then we define $P_1 = z$-$h_i$-$P$-$h_j$-$z'$ and $P_2 = z$-$h_k$-$P$-$h_l$-$z'$. 
       \item If $z'$ has a neighbor $h_l$ with $l < i$, then we define $P_1 = z$-$h_i$-$P$-$h_l$-$z'$ and $P_2 = z$-$h_k$-$P$-$h_j$-$z'$. 
       \item Otherwise, all neighbors of $z'$ are in $\{h_{i+1}, \dots, h_{k-1}\}$. Let $h_j$ and $h_l$ be the first and last neighbor of $z'$ in $h_{i+1}$-$\dots$-$h_{k-1}$, respectively. Then, since $z' \in Z_3$, it follows that $|l-j| > 1$. We define $P_1 = z$-$h_i$-$P$-$h_j$-$z'$ and $P_2 = z$-$h_k$-$P$-$h_l$-$z'$. 
   \end{itemize}
   Now we get a theta with ends $z,z'$ and paths
   $P_1$, $P_2$ and  a third path with interior in $H_{3-i}$, a contradiction. This proves that
   $(H_i,Z')$ is an alignment. Since $Z' \subseteq Z_3$, it follows that $(H_i,Z')$ is a wide alignment.
      This proves \eqref{align}.

\sta{\label{order}
There is a subset    $\hat Z \subseteq Z$ with $|\hat Z|\geq x$,  
  and a path $\hat{H_i} \subseteq H_i$ for $i=1,2$ such that
  $(\hat{H_i}, \hat{Z})$ is a consistent alignment or a connectifier.
  Moreover, if neither of $(\hat{H_1},\hat{Z)}, (\hat{H_2},\hat{Z})$ is a stellar connectifier, then 
  the order given on
$\hat{Z}$ by $(\hat{H_1},\hat{Z})$ and $(\hat{H_2}, \hat{Z})$
   is the same.}

Suppose first that some  $h \in H_1$ has at least $6x$ neighbors in $Z$.
Let $\hat{H_1}=\{h\}$, and let $\hat{Z} \subseteq Z \cap N(h)$
with $|\hat{Z}|=6x$.  If $(H_2,Z)$ is a connectifier,  
      then setting $\hat{H_2}=H_2$, we have that  \eqref{order} holds. So we may assume that $H_2$ is a path and every vertex of $Z$ has a neighbor in $H_2$. Let $\hat{H_2}$ be a minimal subpath of $H_2$ such that
      every $z \in \hat{Z}$ has a neighbor in $\hat{H_2}$. Then
      $H_2=h_1 \dd \dots \dd h_k$ and there exist $z_1,z_k \in \hat{Z}$
      such that $N_{\hat{H_2}}(z_i)=\{h_i\}$ for $i \in \{1,k\}$. Since for every $z \in  \hat{Z} \setminus \{z_1,z_k\}$, the graph
      $H_2 \cup \{z_1,z_k,z,h\}$ is not a theta and not a proper wheel with center
      $z$, it follows that every $z \in \hat{Z} \setminus \{z_1,z_k\}$ has exactly two neighbors in $H_2$
      and they are adjacent. Moreover, no vertex $x$ of $H_2$ has three or more neighbors in $\hat{Z}$, for otherwise $\{x, h\} \cup (N(x) \cap \hat{Z})$ contains a $K_{2,3}$. Now, by choosing $Z' \subseteq \hat{Z}$ greedily, we find a set $Z'$ with the following specifications:
   \begin{itemize}
       \item $z_1, z_k \in Z'$; 
       \item $|Z'| \geq \hat{Z}/6$; and
       \item no two vertices in $Z'$ have a common neighbor in $H_2$. 
   \end{itemize}
   But then $(H_1, Z')$ is a triangular alignment, and \eqref{order} holds. Therefore, each vertex $h \in H_1$ has at most $6x$ neighbors in $Z$; and by symmetry, it follows that each vertex $h \in H_2$ has at most $6x$ neighbors in $Z$. 

   % Next, we assume that $H_1$ is a subdivided star with root  $h$. 
   %    If $(H_2,Z)$ is a connectifier,
   %          then setting $\hat{H_2}=H_2$, we get that \eqref{order} holds.
   %    So we may assume that $H_2$ is a path and every vertex of $Z$ has a neighbor in $H_2$. By \eqref{align} (with $y = 6x$) and in view of the claim of the previous paragraph, there is a set $\hat{Z} \subseteq Z$ such that $(H_2,\hat{Z})$ is a consistent alignment
   %    and $|\hat{Z}| \geq x$. 
   %    Let
   %    $\hat{H_1}$ be the union of $h$ with the legs of $H_1$ containing
   %    neighbors of vertices in $\hat{Z}$; then \eqref{order} holds.

      In view of this, applying  \eqref{align} with $y = 6x$ (possibly twice), we conclude that there exists $Z'  \subseteq Z$ with      $|Z'|\geq x^2$ and subgraphs $H_i' \subseteq H_i$ such that
      $(H_i', Z')$ is a consistent alignment or a consistent connectifier.
         If none of them is a stellar connectifier, for $i \in \{1, 2\}$, let $\pi_i$ be the order given on $Z'$ by $(H_i,Z')$. By Theorem \ref{ESz}
   there exists $\hat{Z} \subseteq Z'$ such that (possibly reversing $H_i$)
   the orders $\pi_i$ restricted to $\hat{Z}$ are the same, as required.
   This proves \eqref{order}.

    \medskip
   
   Now Theorem \ref{twosides} follows from Lemma \ref{twosides_lemma}.
\end{proof}

\section{Bounding the number of non-hubs}
\label{sec:non-hubs}
In this section, we show that a bag of a structured tree decomposition of a
graph $G \in \mathcal{C}_{tt}$ contains a small number of vertices of
$G \setminus \Hub(G)$. This is the only place in the paper where the assumption that $G \in \mathcal{C}_{tt}$ (rather than $G \in \mathcal{C}_t$) is used.

  \begin{theorem}
    \label{adhesions}
    Let $G \in \mathcal{C}$  and let $(T,\chi)$ be a structured tree decomposition of $G$.
  Let $ v \in T$ and $Y \subseteq \chi(v)$
      be a stable set.
      Then there exist components $D_1, \dots, D_k$ of
    $G \setminus \chi(v)$ such that $Y \subseteq \bigcup_{i=1}^kN(D_i)$
      and $k \leq 4$.
      \end{theorem}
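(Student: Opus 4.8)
The plan is to translate the statement into combinatorics about the family of traces $Y_D:=N(D)\cap Y$, where $D$ ranges over the components of $G\setminus\chi(v)$, using theta-freeness of $\mathcal C$ at exactly two points. We may assume $|Y|\ge 2$ (the cases $|Y|\le 1$ being trivial), and then every vertex of $Y$ has a neighbor outside $\chi(v)$: a vertex complete to $\chi(v)$ minus itself cannot belong to a stable subset of $\chi(v)$ of size $\ge 2$, and for a vertex of $\chi(v)$ with no neighbor outside $\chi(v)$, condition~(\ref{PMC1}) of Theorem~\ref{thm:PMC_characterization} forces it to be complete to $\chi(v)$ minus itself. In particular every vertex of $Y$ lies in some $N(D)$, and since $Y$ is stable, Theorem~\ref{thm:PMC_characterization}(\ref{PMC1}) gives, for every pair $\{p,q\}\subseteq Y$, a component seeing both $p$ and $q$.

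First I would record two facts. (i) For distinct $p,q\in Y$ at most two components see both $p$ and $q$: three such components would, after each is replaced by a shortest $p$--$q$ path with interior inside it, give three internally disjoint induced paths of length at least $2$ (since $p\not\sim q$) with pairwise anticomplete interiors, i.e.\ a theta, contradicting $G\in\mathcal C$. (ii) If exactly two components $E_1,E_2$ see some pair $\{p,q\}\subseteq Y$, then $Y=Y_{E_1}\cup Y_{E_2}$, so the required $k$ is at most $2$: otherwise pick $c\in Y\setminus(Y_{E_1}\cup Y_{E_2})$ and components $F$ seeing $\{p,c\}$ and $F'$ seeing $\{c,q\}$; by (i) these are distinct from $E_1,E_2$ and from each other (a common such component would see $\{p,q\}$), and the three paths through $E_1$, through $E_2$, and through $F,c,F'$ form a theta, the inducedness being immediate since $E_1,E_2,F,F'$ are pairwise anticomplete components and $c$ is anticomplete to $E_1\cup E_2$.

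So from now on I assume every pair of $Y$ is seen by exactly one component; then the sets $Y_D$ of size at least $2$ form a linear space on $Y$ (distinct ``lines'' meet in at most one point, collinearity of triples is symmetric, and by the previous paragraph every point lies on a line). Write $\ell_{pq}$ for the line through distinct $p,q\in Y$ and $D_{pq}$ for the corresponding component. The crucial step is: for distinct $p,q\in Y$ there are no distinct $r,s\in Y\setminus \ell_{pq}$ with $\{p,r,s\}$ non-collinear and $\{q,r,s\}$ non-collinear. Indeed, such $r,s$ produce a theta with ends $p,q$, with one path through $D_{pq}$, one through $D_{pr},r,D_{rq}$, and one through $D_{ps},s,D_{sq}$ (each component replaced by a shortest path through it); the main obstacle of the whole argument is the careful but routine verification that, under the stated non-collinearity together with the linear-space axioms, all five components involved are pairwise distinct, the three paths are induced, internally disjoint, of length at least $2$, and have pairwise anticomplete interiors (each ``$\ell$'' appearing in a path is precisely the trace of the component used there, so an unwanted adjacency would force an unwanted collinearity).

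Finally I would run the combinatorics. If some pair has $|Y\setminus\ell_{pq}|\le 1$ then $Y\subseteq \ell_{pq}\cup\ell_{pw}$ for a single vertex $w$ and we are done with $k\le 2$. Otherwise fix any $\{p,q\}$ with $M:=Y\setminus\ell_{pq}$ of size at least $2$. The relation ``$u\sim_P v$ iff $\{p,u,v\}$ collinear'' is an equivalence relation on $M$ (two lines through $p$ sharing two points of $M$ coincide), and likewise $\sim_Q$; by the crucial step every pair of $M$ is $\sim_P$- or $\sim_Q$-related. If both relations had a single class, $M$ would lie on a line through $p$ and on a line through $q$, forcing $|M|\le 1$; so, say, $\sim_P$ has at least two classes, and choosing $r,r'$ in distinct classes gives $\ell_{rr'}\ni q$ and $p\notin\ell_{rr'}$. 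Applying the crucial step to the pair $\{r,r'\}$ with one of the two free points taken to be $p$ yields that every vertex of $Y\setminus\ell_{rr'}$ lies on $\ell_{rp}$ or $\ell_{r'p}$, hence $Y\subseteq \ell_{rr'}\cup\ell_{rp}\cup\ell_{r'p}$ and the three corresponding components cover $Y$. Thus in all cases $k\le 3$, which in particular gives $k\le 4$ as claimed.
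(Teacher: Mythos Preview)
Your proof is correct and takes a genuinely different route from the paper's. The paper argues extremally: it picks a component $D_1$ with $|N(D_1)\cap Y|$ maximum, then $D_2,D_3$ with carefully chosen secondary maximality properties, extracts vertices $x_1,x_2,x_3\in Y$ with $x_i\notin N(D_i)$, shows (via two theta arguments) that at most one further component can see at least two of the $x_i$, and finally derives a theta from any vertex of $Y$ not covered by these at most four components.

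You instead separate the graph theory from the covering combinatorics. Two clean theta constructions dispose of the case where some pair of $Y$ is seen by two components (giving $k\le 2$ there), and in the remaining case the traces $N(D)\cap Y$ form a linear space on $Y$. One further theta argument (your ``crucial step'') yields the incidence constraint that for every line $\ell_{pq}$ and every two points $r,s$ off it, the line $\ell_{rs}$ passes through $p$ or $q$; the bound then follows by pure combinatorics on lines. This buys you a sharper conclusion: your argument actually gives $k\le 3$, while the paper's naturally stops at $k\le 4$. In fact your crucial step can be pushed further --- applied to all pairs on a line of size at least three it forces that line to miss at most one point of $Y$, and applied to a $4$-tuple it rules out $|Y|\ge 4$ when all lines have size two --- so $k\le 2$ is the truth in the linear-space case as well. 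The paper's proof is shorter and more concrete; yours isolates the role of theta-freeness more cleanly and extracts more.
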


\begin{proof}
  Let $D_1$ be a component of $G \setminus \chi(v)$ such that $N(D_1) \cap Y$ is maximal.
  We may assume there exists $x_1 \in Y \setminus N(D_1)$.
  Since $(T, \chi)$ is structured and since $x_1$ is not
  complete to $\chi(v) \setminus \{x_1\}$, by Theorem \ref{thm:PMC_characterization} there exists a component $D_2$ of $G \setminus \chi(v)$   such that
  $x_1 \in N(D_2)$; choose $D_2$ with
  $N(D_2) \cap Y \cap N(D_1)$ is maximal. By the maximality
  of $N(D_1) \cap Y$, there exists $x_2 \in (Y \cap N(D_1)) \setminus N(D_2)$.
  Since $(T, \chi)$ is structured, Theorem \ref{thm:PMC_characterization}
  implies that there exists
a component $D_3$ of $G \setminus \chi(v)$ such that
$x_1,x_2 \in N(D_3)$.  By the choice of $D_2$, there exists $x_3 \in N(D_1) \cap N(D_2) \cap Y$ such that $x_3 \not \in N(D_3)$. For $\{i,j,k\}=\{1,2,3\}$, let
  $P_{ij}$ be a path from $x_i$ to $x_j$ with interior in $D_k$.

  Let $\mathcal{D}$ be the
  set of all components $D$ of $G \setminus \chi(v)$ that
  such that $|N(D) \cap \{x_1,x_2,x_3\}|>1$. Then $D_1,D_2,D_3 \in \mathcal{D}$.

  \sta{\label{eq:4} We have that  $|\mathcal{D} \setminus \{D_1,D_2,D_3\}| \leq 1$.}

    Suppose first that there is a component $D \in  \mathcal{D} \setminus \{D_1,D_2,D_3\}$ with $N(D) \cap \{x_1, x_2, x_3\} = \{x_1, x_2\}$. Then, we get a theta with ends $x_1, x_2$ and paths $x_1$-$P_{12}$-$x_2$, $x_1$-$P_{13}$-$x_3$-$P_{23}$-$x_2$ as well as a third path with interior in $D$ (using that $x_3 \not\in N(D)$). This is a contradiction, and proves that $N(D) = \{x_1, x_2, x_3\}$.
  
    Now suppose that $D, D' \in \mathcal{D} \setminus \{D_1, D_2, D_3\}$ with $D \neq D'$. Then we get a theta with ends $x_2,x_3$ and paths with interiors in
  $D_1,D, D'$, respectively, a contradiction. This proves \eqref{eq:4}.

  \medskip
  Thus $|\mathcal{D}| \leq 4$.
  We may assume that there is a vertex $x \in Y$ such that
  $x \not \in \bigcup_{D \in \mathcal{D}}N(D)$.
  Since  $(T, \chi)$ is structured, Theorem \ref{thm:PMC_characterization} implies that
  there exist paths
  $P_1,P_2$ where $P_i$ is from $x$ to
  $x_i$, and $P_i^* \cap \chi(v)=\emptyset$. It follows that  each $P_i^*$
  is contained in a component $F_i$ of $G \setminus \chi(v)$.
  Since $F_i \not \in \mathcal{D}$, it follows that $F_1 \neq F_2$,
  $x_2,x_3 \not \in N(F_1)$, and $x_1,x_3 \not \in N(F_2)$.
  Now we get a theta with ends $x_1,x_2$ and paths
  $x_1 \dd P_{13} \dd x_3 \dd P_{23} \dd x_2$, $x_1 \dd P_{12} \dd x_2$
  and $x_1 \dd P_1 \dd x \dd P_2 \dd x_2$, a contradiction. This
  proves Theorem~\ref{adhesions}.
  \end{proof}

Next we show:
\begin{theorem} \label{smallminimal}
  Let $G \in \mathcal{C}_{tt}$, let $S$ be a minimal separator of $G$,
  and let $Y \subseteq S \setminus \Hub(G)$ be stable. Let $\tau=\tau(t)$
  be as in Theorem \ref{twosides}. Then 
  $|Y| \leq \tau$.
\end{theorem}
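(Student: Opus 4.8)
The plan is to argue by contradiction: suppose $|Y|>\tau$; I will produce a generalized $t$-pyramid in $G$, contradicting $G\in\mathcal{C}_{tt}$. Since $S$ is a minimal separator, there are at least two full components $D_1,D_2$ of $G\setminus S$, so $N(D_1)=N(D_2)=S$. Fix $Y_0\subseteq Y$ of the appropriate size and put $G'=G[D_1\cup D_2\cup Y_0]$. First I would check that $G'$ satisfies the hypotheses of Theorem~\ref{twosides}: the class $\mathcal{C}$ and the absence of $K_t$ are hereditary, so $G'\in\mathcal{C}_t$; the sets $D_1$ and $D_2$ are connected and anticomplete, hence they are the two components of $G'\setminus Y_0$; every $v\in Y_0\subseteq Y\subseteq S=N(D_i)$ has a neighbour in $D_i$, so $N_{G'}(D_1)=N_{G'}(D_2)=Y_0$; $Y_0$ is stable since $Y$ is; and $Y_0\cap\Hub(G')=\emptyset$, because any proper wheel of $G'$ is also a proper wheel of $G$ (whether a wheel is a twin wheel, a short pyramid, or proper depends only on the hole and on the neighbours of the centre on it, all of which are preserved when passing to $G'$), so $\Hub(G')\subseteq\Hub(G)$ and $Y_0\cap\Hub(G)\subseteq Y\cap\Hub(G)=\emptyset$.

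Now apply Theorem~\ref{twosides} to $G'$. It yields $X\subseteq Y_0$, an induced subgraph $H_1\subseteq D_1$ and an induced subgraph $H_2\subseteq D_2$ such that $(H_1,X)$ is a triangular connectifier or a triangular alignment, $(H_2,X)$ is a stellar connectifier, a spiky connectifier, a spiky alignment, or a wide alignment, the orders induced on $X$ by the two sides agree whenever neither side is a stellar connectifier, and $(H_1,X)$ is not a triangular alignment when $(H_2,X)$ is a wide alignment. From this data I would read off a generalized pyramid directly. Let $w_1,\dots,w_m$ be the common order on $X$ (in the stellar case, the order given by the triangular side). Let $P$ be the path obtained from the spine path $P(H_1)$ (or from $H_1$ itself if $H_1$ is an alignment) by attaching $w_1$ at one end and $w_m$ at the other, and let $Q$ be the analogous path through $H_2$ (in the stellar case, two arms of the subdivided star joined through its root) with $w_1,w_m$ at its two ends. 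Since $H_1\subseteq D_1$ and $H_2\subseteq D_2$ are anticomplete and $w_1,w_m$ each have exactly one neighbour in $H_1$ and exactly one in $H_2$, the paths $P$ and $Q$ meet precisely in $\{w_1,w_m\}$, so $P\cup Q$ is a hole. For each middle vertex $w_i$ ($2\le i\le m-1$), let $R_i$ be the single vertex $w_i$ when both sides are alignments, and otherwise the path running from the simplicial $H_1$-neighbour of $w_i$ (which, in the triangular-connectifier case, together with two consecutive vertices of $P(H_1)$ forms a triangle) along its leg to $w_i$, and then, if $H_2$ is a connectifier, on along the leg or arm of $H_2$ attached to $w_i$ to its base vertex; let $a_i$, $b_i$ be its $H_1$-end and $H_2$-end, and $z_i$ the unique neighbour of $b_i$ on $Q$ (the root, in the stellar case).

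One then verifies, using the definitions of connectifier and alignment and the fact that distinct legs of a connectifier are pairwise anticomplete off the spine, that $a_i$ has exactly two adjacent neighbours on $P$, that $R_i\setminus\{a_i,b_i\}$ is anticomplete to $P\cup Q$, that the $R_i$ are pairwise anticomplete, that $P$ traverses the triangles and $Q$ the vertices $z_i$ in the order $w_1,\dots,w_m$ (this is where the agreement of the two orders is used), and that consecutive triangles on $P$ are disjoint. Deleting $w_1$ and $w_m$, which are absorbed into the hole, leaves a generalized $t$-pyramid inside $G'$, hence inside $G$, a contradiction.

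The step I expect to be the main obstacle is the one combination not covered by the construction above, namely $(H_1,X)$ a triangular connectifier and $(H_2,X)$ a wide alignment: in a wide alignment a middle vertex $w_i$ has at least two, pairwise non-adjacent, neighbours in the path $H_2$, whereas a generalized pyramid requires $b_i$ to have exactly one neighbour on $Q$. Here I would rule this configuration out inside $\mathcal{C}$ directly. Form the hole $H=P\cup Q$ as above and let $q_a,q_b$ be the first and last neighbours of $w_i$ in $H_2$ (so $q_a\not\sim q_b$); these lie on $H$. If $w_i$ has exactly these two neighbours on $H$, then $q_a$ and $q_b$ are joined by three internally disjoint paths with pairwise anticomplete interiors — through $w_i$, through the subpath of $H_2$ from $q_a$ to $q_b$, and around the rest of $H$ through $w_1$, the spine of $H_1$, and $w_m$ — that is, a theta, which is impossible. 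If $w_i$ has four or more neighbours on $H$, then $(H,w_i)$ is a proper wheel, so $w_i\in\Hub(G)$, contradicting $w_i\in Y$. The remaining case, exactly three neighbours on $H$, is eliminated by a closer analysis: $C_4$-freeness forbids two of these three from being at distance exactly two along $H_2$, and combining this with the exclusion of thetas, prisms and even wheels one again reaches a contradiction. In every case $G$ contains a forbidden subgraph, so $|Y|\le\tau$.
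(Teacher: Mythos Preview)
Your approach is essentially identical to the paper's: assume $|Y|$ is large, take two full components for $S$, apply Theorem~\ref{twosides} to their union with (a $\tau$-subset of) $Y$, and read off a generalized $t$-pyramid from the resulting pair $(H_1,X),(H_2,X)$. The paper compresses the entire extraction into ``a routine case analysis (whose details we leave to the reader),'' so your write-up is in fact more detailed than the original---including the correct isolation of the one delicate combination (triangular connectifier against a wide alignment), which is indeed not excluded by Theorem~\ref{twosides} and must be handled separately.
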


\begin{proof}
Suppose $|Y| \geq \tau$. 
Let $D_1,D_2$ be distinct full components for $X$.
Apply Theorem \ref{twosides} to $D_1 \cup D_2 \cup Y$, and let
$H_1,H_2,X$ be as in the conclusion of \ref{twosides}. Then a routine case analysis (whose details we leave to the reader) shows that
$H_1 \cup H_2 \cup X$ contains a generalized $t$-pyramid in $G$, a contradiction.
This proves Theorem \ref{smallminimal}.
    \end{proof}

We can now prove the main result of this section.

\begin{theorem} \label{boundhubs}
  Let $G \in \mathcal{C}_{tt}$, let $(T, \chi)$ be a structured tree decomposition
  of $G$, and let $v \in T$.
  Let $Y \subseteq \chi(v) \setminus \Hub(G)$ be stable.
  Let $\tau=\tau(t)$ be as in Theorem \ref{twosides}.
  Then $|Y| \leq 4 \tau$.   
\end{theorem}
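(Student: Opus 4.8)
The plan is to combine the two preceding theorems of this section, \ref{adhesions} and \ref{smallminimal}, via a pigeonhole argument. First I would apply Theorem \ref{adhesions} to $G$, the structured tree decomposition $(T,\chi)$, the vertex $v$, and the stable set $Y$: this is legitimate since $G\in\mathcal{C}_{tt}\subseteq\mathcal{C}$. It yields components $D_1,\dots,D_k$ of $G\setminus\chi(v)$ with $Y\subseteq\bigcup_{i=1}^k N(D_i)$ and $k\le 4$. By pigeonhole there is an index $i$ with $|N(D_i)\cap Y|\ge |Y|/4$; set $Y'=N(D_i)\cap Y$.

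Next I would observe that $N(D_i)$ is a minimal separator of $G$. This is exactly Theorem \ref{prop:PMC_adhesions_are_seps} applied with $\Omega=\chi(v)$, which is a PMC of $G$ because $(T,\chi)$ is structured, and with the component $D_i$ of $G\setminus\chi(v)$. Now $Y'\subseteq N(D_i)$ is stable (being a subset of the stable set $Y$) and $Y'\cap\Hub(G)=\emptyset$ (since $Y\subseteq\chi(v)\setminus\Hub(G)$). So Theorem \ref{smallminimal} applies to the minimal separator $N(D_i)$ and the stable set $Y'\subseteq N(D_i)\setminus\Hub(G)$, giving $|Y'|\le\tau$. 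Combining, $|Y|/4\le|Y'|\le\tau$, hence $|Y|\le 4\tau$, as claimed.

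There is essentially no obstacle here beyond correctly invoking the earlier results; all the real work has already been done in Theorems \ref{adhesions} (the ``at most four components suffice'' bound, proved from $C_4$-, theta-freeness) and \ref{smallminimal} (the bound $\tau$ on a stable set of non-hubs in a minimal separator, which is where the exclusion of generalized $t$-pyramids and the connectifier machinery of Section~\ref{sec:connectifiers} enter). The only point to be careful about is that Theorem \ref{smallminimal} requires its separator to be a genuine \emph{minimal} separator, which is why Theorem \ref{prop:PMC_adhesions_are_seps} (together with the structuredness hypothesis) is needed as the intermediate step; without it one only knows $N(D_i)$ is a separator, not a minimal one.

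\begin{proof}
  By Theorem \ref{adhesions}, there exist components $D_1, \dots, D_k$ of
  $G \setminus \chi(v)$ with $Y \subseteq \bigcup_{i=1}^k N(D_i)$ and $k \leq 4$.
  Hence there is $i \in \{1, \dots, k\}$ with $|N(D_i) \cap Y| \geq |Y|/4$; write
  $Y' = N(D_i) \cap Y$.
  Since $(T, \chi)$ is structured, $\chi(v)$ is a PMC of $G$, and so by
  Theorem \ref{prop:PMC_adhesions_are_seps} the set $N(D_i)$ is a minimal separator
  of $G$. Moreover $Y' \subseteq N(D_i)$ is stable (as a subset of $Y$) and
  $Y' \cap \Hub(G) = \emptyset$ (since $Y \subseteq \chi(v) \setminus \Hub(G)$).
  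Therefore Theorem \ref{smallminimal}, applied to the minimal separator $N(D_i)$
  and the stable set $Y' \subseteq N(D_i) \setminus \Hub(G)$, gives $|Y'| \leq \tau$.
  Consequently $|Y|/4 \leq |Y'| \leq \tau$, that is, $|Y| \leq 4\tau$. This proves
  Theorem \ref{boundhubs}.
\end{proof}
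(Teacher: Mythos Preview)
Your proof is correct and follows essentially the same route as the paper's: apply Theorem~\ref{adhesions} to cover $Y$ by at most four neighborhoods $N(D_i)$, use pigeonhole to find one with $|N(D_i)\cap Y|\ge |Y|/4$, note that $N(D_i)$ is a minimal separator by Theorem~\ref{prop:PMC_adhesions_are_seps}, and invoke Theorem~\ref{smallminimal}. The only cosmetic difference is that the paper phrases this as a proof by contradiction (assume $|Y|>4\tau$ and derive $|N(D)\cap Y|>\tau$), whereas you give the equivalent direct argument.
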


\begin{proof}
  Suppose $|Y| > 4 \tau$. By Theorem \ref{adhesions},  there exists
  a component $D$ of $G \setminus \chi(v)$ such that
  $|N(D) \cap Y| > \tau$. By Theorem \ref{prop:PMC_adhesions_are_seps}, 
  $N(D) \cap Y$ is a minimal separator of $G$, contrary to Theorem \ref{smallminimal}.
  This proves Theorem \ref{boundhubs}.
\end{proof}

\section{Putting everything together} \label{sec:proof}

For the remainder of the paper, all logarithms are taken in base 2.
We start with the following theorem from \cite{TWIII}:

\begin{theorem}[Abrishami, Chudnovsky, Hajebi, Spirkl \cite{TWIII}]
  \label{logncollections}
  Let $t \in \mathbb{N}$, and let $G$ be (theta, $K_t$)-free with $|V(G)|=n$.
  There exist an integer $d=d(t)$ be 
  and   a partition
  $(S_1, \dots, S_k)$   of $V(G)$ with the following properties:
  \begin{enumerate}
  \item $k \leq  \frac{d}{4} \log n$.
  \item $S_i$ is a stable set for every $i \in \{1, \dots, k\}$.
  \item For every $i \in \{1, \dots, k\}$ and $v \in S_i$ we have
    $\deg_{G \setminus \bigcup_{j <i}S_j}(v)  \leq d$. \label{hubsequence-3}
  \end{enumerate}
  \end{theorem}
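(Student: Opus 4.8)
The plan is to build the ordered partition greedily, by repeatedly peeling off a large stable set of vertices of bounded degree; the only non-trivial ingredient is that the class is uniformly degenerate. Set $W_1=V(G)$, and having defined $W_i\neq\emptyset$ let $L_i$ be the set of vertices of $G[W_i]$ with at most $d$ neighbors in $G[W_i]$, let $S_i$ be a stable set of $G[L_i]$ of maximum size, and let $W_{i+1}=W_i\setminus S_i$; we stop at the step $k$ with $W_{k+1}=\emptyset$. Then $(S_1,\dots,S_k)$ is a partition of $V(G)$, each $S_i$ is stable, and every $v\in S_i\subseteq L_i$ has at most $d$ neighbors in $G[W_i]=G\setminus\bigcup_{j<i}S_j$, so conditions~(2) and~(3) hold for every choice of $d$. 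All the work is in choosing $d=d(t)$ so that $k\le\frac d4\log n$.

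For this I would first invoke the fact that there is a constant $d_0=d_0(t)$ such that every (theta, $K_t$)-free graph is $d_0$-degenerate; in particular every induced subgraph of $G$ has at most $d_0$ times as many edges as vertices and is $(d_0+1)$-colorable. This is the place where the hypotheses are genuinely used: it follows from the theorem that every graph of sufficiently large average degree with no $K_t$ contains an induced subdivision of $K_{2,3}$, that is, a theta. Now take $d:=8d_0+12$. Two estimates control the process. First, since $G[W_i]$ has at most $d_0|W_i|$ edges, fewer than $\frac{2d_0}{d}|W_i|\le\frac14|W_i|$ of its vertices have degree greater than $d$, so $|L_i|\ge\frac12|W_i|$; in particular $G[W_i]$ has a vertex of degree at most $d_0\le d$, hence $L_i\neq\emptyset$ and the process terminates. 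Second, $G[L_i]$ is $d_0$-degenerate, hence $(d_0+1)$-colorable, so a largest color class gives $|S_i|\ge\frac{|L_i|}{d_0+1}\ge\frac{|W_i|}{2(d_0+1)}$. Therefore $|W_{i+1}|\le\bigl(1-\frac1{2(d_0+1)}\bigr)|W_i|$, and iterating, $|W_i|\le\bigl(1-\frac1{2(d_0+1)}\bigr)^{i-1}n$. Since $|W_k|\ge1$, the inequality $-\ln(1-x)\ge x$ yields $k-1\le2(d_0+1)\ln n<2(d_0+1)\log n$, so $k<(2d_0+3)\log n=\frac d4\log n$ for $n\ge2$, as required; the degenerate cases $n\le1$ are treated (or excluded) separately.

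The main obstacle is precisely the uniform-degeneracy input. The peeling argument and the constant-tracking above are routine, but a direct proof that (theta, $K_t$)-free graphs have bounded degeneracy does not seem easy: the natural elementary attempt — take a minimum-degree vertex $v$, extract a large stable set $I\subseteq N(v)$ by Ramsey's theorem (using that $G[N(v)]$ is $K_{t-1}$-free), and exploit the observation that two non-adjacent vertices with three pairwise non-adjacent common neighbors already form a theta — only bounds the minimum degree by a polynomial in $n$. So one really has to bring in the induced-subdivision theorem (or an equivalent structural statement) as a black box, after which the remainder is bookkeeping, and the slack in the ``$\frac d4$'' of the statement is absorbed by choosing $d$ to be a sufficiently large multiple of $d_0$.
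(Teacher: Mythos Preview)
Your argument is correct. Note that the paper does not prove this statement here; it is quoted verbatim from \cite{TWIII}. The approach you outline --- invoke the K\"uhn--Osthus induced-subdivision theorem (which is exactly the reference \cite{KuhnOsthus} in the paper's bibliography) to get that (theta, $K_t$)-free graphs are $d_0(t)$-degenerate, and then greedily peel off maximum stable sets of bounded-degree vertices --- is the same argument used in \cite{TWIII}, and your constant bookkeeping is tight enough to recover the stated $\tfrac{d}{4}\log n$ bound. Your identification of the degeneracy input as the only nontrivial ingredient, and your remark that the naive common-neighbour argument fails to give it directly, are both accurate.
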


Let $G \in \mathcal{C}_{tt}$ be a graph. A {\em hub-partition} of $G$ is a partition
$S_1, \dots, S_k$ of $\Hub(G)$ as in
Theorem \ref{logncollections};
we call $k$ the {\em order} of the partition.
We call the {\em hub-dimension} of $G$ (denoting it by
$\hdim(G)$) the smallest $k$ such that $G$ as a hub-partition of order $k$.

For the remainder of this section, let us fix $t \in \mathbb{N}$. Let $d = d(t)$ as in Theorem \ref{logncollections}. Let $c_t=\gamma(t)+1$ with $\gamma(t)$ as in Theorem \ref{noblocksmalltw_Ct}.
Let $k_t=k(t)$ be as in Theorem \ref{banana}. Let $m=k_t+2d$.
Let $\Psi=4 \tau(t)$ where $\tau(t)$ is as in Theorem \ref{twosides}.
Let $\Delta=(2m-1)m+c_t$.

The following is a strengthening of Theorem \ref{main}, which we prove by induction on
$\hdim(G)$. By  Theorem \ref{logncollections}, $\hdim(G) \leq \frac{d}{4} \log n$ for every $G \in \mathcal{C}_{tt}$, so  Theorem \ref{diminduction}
immediately implies  Theorem \ref{main}.

\begin{theorem}
  \label{diminduction}
  Let $G \in \mathcal{C}_{tt}$ with $|V(G)|=n$. 
  Then  $\tw(G) \leq c_t + \Delta \Psi (\log n+ \hdim(G))$.
\end{theorem}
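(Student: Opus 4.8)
The plan is to prove Theorem~\ref{diminduction} by induction on the pair $(\hdim(G),|V(G)|)$, ordered lexicographically. I will use repeatedly that $\mathcal{C}_{tt}$ is hereditary and that $\hdim$ is monotone under induced subgraphs: if $H$ is an induced subgraph of $G$ then $\Hub(H)\subseteq\Hub(G)\cap V(H)$, and intersecting each part of a hub-partition of $G$ with $V(H)$ yields a hub-partition of $H$ (passing to a subgraph only shrinks neighbourhoods, so the degree condition survives), whence $\hdim(H)\le\hdim(G)$. Three easy reductions come first. If $\hdim(G)=0$ then $\Hub(G)=\emptyset$, so Theorem~\ref{noblocksmalltw_Ct} gives $\tw(G)\le\gamma(t)=c_t-1$. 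If $G$ has a clique cutset $C$, write $G$ as a union of two proper induced subgraphs $G_1,G_2$ with $V(G_1)\cap V(G_2)=C$; gluing tree decompositions along bags that contain the clique $C$ shows $\tw(G)=\max(\tw(G_1),\tw(G_2))$, and since $\hdim(G_i)\le\hdim(G)$ and $|V(G_i)|<|V(G)|$ we finish by induction. If $G$ has a balanced separator $X$ with $|X|<m:=k_t+2d$, then Lemma~\ref{lem:septotd} applied with the induction hypothesis on each component of $G\setminus X$ (each has at most $n/2$ vertices and $\hdim$ at most $\hdim(G)$) gives $\tw(G)\le(m-1)+c_t+\Delta\Psi(\log(n/2)+\hdim(G))$, which is at most $c_t+\Delta\Psi(\log n+\hdim(G))$ because $\Delta\Psi\ge m-1$.

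So we may assume $p:=\hdim(G)\ge 1$, that $G$ has no clique cutset, and that $G$ has no balanced separator of size less than $m$; these are exactly the standing hypotheses of Section~\ref{sec:centralbag}. Fix a hub-partition $S_1,\dots,S_p$. Since $\bigcup_j S_j=\Hub(G)$, every vertex of the stable set $S_1$ has at most $d$ neighbours in $\Hub(G)$ and is therefore $d$-safe, so we may run the construction of Section~\ref{sec:centralbag} with $S'=S_1$: let $(T,\chi)$ be an $m$-atomic (hence, by Theorems~\ref{atomictolean} and \ref{atomictotight}, tight and $m$-lean) tree decomposition of $G$, let $t_0$ be a center of $(T,\chi)$ (Theorem~\ref{center}), let $S_{bad}$ be the set of non-$t_0$-cooperative vertices of $S_1$ --- of size at most $1$ by Lemma~\ref{noncoop} --- and let $\beta=\beta(S_1)$. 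The crucial observation for the induction is that $\hdim(\beta^A(S_1))\le p-1$: indeed $\beta^A(S_1)$ is an induced subgraph of $G$, so $\Hub(\beta^A(S_1))\subseteq\Hub(G)$, and by Theorem~\ref{A_centralbag}\eqref{A-4} this set is disjoint from $S_1$, so $(S_2,\dots,S_p)$ restricts to a hub-partition of $\beta^A(S_1)$ of order at most $p-1$. Hence by induction $\tw(\beta^A(S_1))\le c_t+\Delta\Psi(\log n+p-1)=c_t+\Delta\Psi(\log n+p)-\Delta\Psi$, and by Theorem~\ref{structured} we may take a structured tree decomposition $(T_0,\chi_0)$ of $\beta^A(S_1)$ of the same width.

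The heart of the proof is to grow this into a tree decomposition of $G$ in two steps, paying only an additive $\Delta\Psi$ in width. First, since each component $D$ of $\beta\setminus\beta^A(S_1)$ is an induced subgraph of $G$ with $|V(D)|<|V(\beta)|\le n$ and $\hdim(D)\le p$, induction bounds $\tw(D)$ within the target; feeding $(T_0,\chi_0)$ and these decompositions into Theorem~\ref{A_extendtree} produces a tree decomposition $(T_\beta,\chi_\beta)$ of $\beta$ whose bags satisfy $|\chi_\beta(t)\cap\chi(t_0)|\le|\chi_0(t)|+2m(m-1)\,|\chi_0(t)\cap\Core(S_1)|$; since $\Core(S_1)$ is a stable set of non-hubs of $\beta^A(S_1)\in\mathcal{C}_{tt}$ and $(T_0,\chi_0)$ is structured, Theorem~\ref{boundhubs} bounds $|\chi_0(t)\cap\Core(S_1)|$ by $4\tau(t)=\Psi$, and Lemma~\ref{smallC} bounds each $|C(v)\cap\chi(t_0)|$ by $2m(m-1)$. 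Second, after re-structuring $(T_\beta,\chi_\beta)$ via Theorem~\ref{structured} so that its bags are PMCs of $\beta$, feed it --- together with tree decompositions of the components $D_i'$ of $G\setminus\chi(t_0)$, each of at most $n/2$ vertices (as $t_0$ is a center) and $\hdim$ at most $p$, so bounded by induction --- into Theorem~\ref{delta_extendtree} to obtain a tree decomposition $(U,\psi)$ of $G$. Every bag of $(U,\psi)$ coming from $(T_\beta,\chi_\beta)$ lies inside $\chi(t_0)\cup S_{bad}$ and consists of the $\chi(t_0)$-part of a $(T_\beta,\chi_\beta)$-bag, together with $S_{bad}$ and at most $\Psi$ adhesions $\chi(t_0)\cap\chi(t_i)$, each of size less than $m$ by $m$-leanness; the bound $\Psi$ on the number of adhesions holds because the sets $Conn(t_i)\setminus\chi(t_0)$ are pairwise disjoint and anticomplete, a vertex picked from any of them is a non-hub of $\beta$ (Theorem~\ref{connectors}\eqref{C-3} together with $\beta\subseteq(G\setminus G_{t_0\rightarrow t_i})\cup Conn(t_i)$), and Theorem~\ref{boundhubs} then applies to $\beta$. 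Combining the two increments with the bound on $\tw(\beta^A(S_1))$ and the explicit values $m=k_t+2d$, $\Psi=4\tau(t)$, $\Delta=(2m-1)m+c_t$ (so $\Delta\ge 2m(m-1)$, and the $c_t$ term of $\Delta$ absorbs the additive $c_t$ from the separator-based fallbacks), a direct computation gives $\width(U,\psi)\le c_t+\Delta\Psi(\log n+p)$. If instead $\beta$ itself has a balanced separator of size at most $2m(m-1)+c_t$, I would bound $\tw(\beta)$ directly by Lemma~\ref{lem:septotd} and induction on its components (each of at most $|\beta|/2\le n/2$ vertices), structure the result, and apply Theorem~\ref{delta_extendtree} as above; the same calibration makes this case fit.

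The main obstacle is precisely this last piece of bookkeeping: tracking the two width increments through Theorems~\ref{A_extendtree} and \ref{delta_extendtree}, and checking that together with the $c_t$-slack for the balanced-separator fallbacks and the single vertex of $S_{bad}$ they really sum to at most $\Delta\Psi$ per level of the induction --- this is where the exact shapes of $\Delta$ and $\Psi$, the per-bag counts ($\le\Psi$ core vertices and $\le\Psi$ adhesions), the adhesion sizes ($<m$), and the bounds $|C(v)\cap\chi(t_0)|<2m(m-1)$ all have to be reconciled, and where one must take care that the bags inherited from the component decompositions of $D$ and of $D_i'$ do not smuggle in too many vertices of $\chi(t_0)$. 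A secondary but routine matter is to pin down the monotonicity of $\hdim$ and to confirm that every reduction and sub-case above strictly decreases the lexicographic induction parameter.
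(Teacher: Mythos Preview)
Your plan is correct and follows the paper's proof essentially step for step: the same lexicographic induction on $(\hdim(G),n)$, the same three reductions (hub-free base case, clique cutset, small balanced separator), the same two-stage growth $\beta^A(S_1)\to\beta\to G$ via Theorems~\ref{A_extendtree} and~\ref{delta_extendtree}, with Theorem~\ref{boundhubs} controlling the number of core vertices and of adhesions per bag. One small sharpening you will need in the bookkeeping: for the components $D$ of $\beta\setminus\beta^A(S_1)$ you wrote $|V(D)|<|V(\beta)|$, but the paper uses the stronger $|D|\le n/2$ (since $D\subseteq A(v)$ and $|B(v)|>|\beta|/2$), which is what buys the ``$-1$'' in $\log n$ needed for the arithmetic to close.
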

\begin{proof}
  The proof is by induction on $\hdim(G)$, and  for a fixed value of $\hdim$, by induction on $n$.
  If $hdim(G)=0$, then by Theorem \ref{noblocksmalltw_Ct}, we have that $\tw(G) \leq c_t$ and
  Theorem \ref{diminduction} holds as required.
  Thus we may assume $\hdim(G)>0$.
  A special case of Lemma 3.1 from \cite{cliquetw} shows that clique cutsets do not affect treewidth; thus we may assume that $G$ does not admit a clique cutset.

  \sta{\label{smallsep} If $G$ has a balanced separator of size
    at most $m$, then the theorem holds.}

  Let $X$ be a balanced separator of $G$ of size at most $m$. Let $D_1, \dots, D_s$ be the components of $G \setminus X$. Since $|D_i| \leq \frac{n}{2}$
  for every $i \in \{1, \dots, s\}$, it follows from our induction on $n$ that
  $\tw(D_i) \leq c_t + \Delta \Psi (\log n+ \hdim(G)-1)$. Then, by Lemma \ref{lem:septotd}, the treewidth of $G$ is at most
  \begin{align*}
     & \ \ \ \ \max_{i \in \{1, \dots, s\} }\tw(D_i) + |X|\\
  &\leq  c_t + \Delta   \Psi (\log n+ \hdim(G)-1)+ m\\
&\leq   c_t + \Delta  \Psi(\log n+ \hdim(G)), 
  \end{align*}
  where we use that $\Delta \geq m$. 
  This proves \eqref{smallsep}.

  \medskip
  
In view of \eqref{smallsep}, we may assume that $G$ does not admit a balanced separator of size at most $m$, and so the results of Section~\ref{sec:centralbag}
apply.

Let $S_1, \dots, S_k$ be a hub-partition of $G$ with $k=\hdim(G)$.
  We now use the terminology from Section \ref{sec:centralbag}.
If follows from the definition of $S_1$ that every vertex in $S_1$
is $d$-safe.
Let $(T,\chi)$ be an $m$-atomic
tree decomposition of $G$. Let $t_0$ be a  center for $T$,
and let $\beta=\beta(S_1)$ be as in Section \ref{sec:centralbag}. 
Write $\beta_0=\chi(t_0)$.
Our first goal is to  prove:

\sta{\label{betagood} There is  a tree decomposition
  $(T_{\beta}, \chi_{\beta})$ of $\beta$ of
  such that for every $t \in T_{\beta}$, we have that $|\chi_{\beta}(t) \cap \beta_0| \leq c_t + \Delta \Psi (\log n+ k-1)+(\Delta - m) \Psi$.}

We start with:

\sta{\label{smallsepbeta}
If $\beta$ has a balanced separator  of size at most
$2m(m-1)+c_t$, then \eqref{betagood} holds.}

To prove \eqref{smallsepbeta}, we let $X$ be a balanced separator of size at most $2m(m-1)+c_t = \Delta - m$ of
$\beta$.
%Then  $|\beta| < \frac{n}{2}$.
Let $D_1, \dots, D_s$ be the components of $\beta \setminus X$. Since $|D_i| \leq \frac{|\beta|}{2} \leq \frac{n}{2}$
  for every $i \in \{1, \dots, s\}$, it follows from our induction on $n$ that
  $\tw(D_i) \leq c_t + \Delta  \Psi (\log n+ \hdim(G)-1)$. Therefore, by Lemma \ref{lem:septotd} applied to $\beta$ and $X$, we conclude that: 
  \begin{align*}
      \tw(\beta) &\leq \max_{i \in \{1, \dots, s\} }\tw(D_i) + |X| \\
      &\leq  c_t + \Delta   \Psi (\log |\beta|+ \hdim(G)-1) + \Delta-m\\
      & \leq c_t + \Delta  \Psi(\log n+ \hdim(G)-1)+  (\Delta-m) \Psi.
  \end{align*}
  This proves \eqref{smallsepbeta}.

  \medskip
   Now we may assume that $\beta$ does not have a balanced separator  of size at most $2m(m-1)+c_t$. Therefore 
   $\beta^A(S_1)$ is defined, 
   as in Section \ref{sec:centralbag}.  By Theorem \ref{A_centralbag}\eqref{A-4}, we have that   $S_1 \cap \Hub(\beta^A(S_1))=\emptyset$ and
   $S_2 \cap Hub(\beta^A(S_1)), \dots, S_k \cap Hub(\beta^A(S_1))$ is a hub-partition of
   $\beta^A(S_1)$. 
It follows that $\hdim(\beta^A(S_1)) \leq k-1$.

Let $D_1, \dots D_s$ be
the components of $\beta \setminus \beta^A(S_1)$. By Theorem \ref{A_centralbag}\eqref{A-3}, we have that 
$|D_i| \leq \frac{n}{2}$ for every $i \in \{1, \dots, s\}$. Moreover, by induction on $k$, we obtain that
$$\tw(\beta^A(S_1))    \leq c_t +\Delta   \Psi (\log n+ k-1).$$
Our induction on $n$ further implies that
$$\tw(D_i)   \leq c_t +  \Delta  \Psi (\log n+ k-1).$$

By Theorem \ref{structured}, the graph $\beta^A(S_1)$ admits a structured  tree decomposition $(T_0, \chi_0)$ of
width $\tw(\beta^A(S_1))$. For every $i \in \{1, \dots, s\}$, let  $(T_i, \chi_i)$ be a  tree decomposition of $D_i$ of width $\tw(D_i)$. Let $(T_\beta,\chi_\beta)$ be a tree decomposition of $\beta$ obtained as in Theorem \ref{A_extendtree}. We claim:

 \sta{\label{betasmall}
  For every $i \in \{0, \dots, s\}$ and for every $t \in T_i$, we have that 
  $|(\chi_{\beta}(t) \setminus \chi_i(t)) \cap \beta_0| \leq 2m(m-1)\Psi$.}

Let $i \in \{1, \dots, s\}$ and let $t \in T_i$.
It follows immediately from Theorem \ref{A_extendtree} that
$|(\chi_{\beta}(t) \setminus \chi_i(t)) \cap \beta_0| \leq 2m(m-1)$.
Now let $t \in T_0$. Since     $S_1 \cap \Hub(\beta^A(S_1))=\emptyset$,
we deduce from Theorem \ref{boundhubs} applied to $\beta^A(S_1)$ that 
 $|\chi_0(t) \cap \Core(S_1)| \leq \Psi $. Now again
it follows from Theorem \ref{A_extendtree} that
$|(\chi_\beta(t) \setminus \chi_0(t)) \cap \beta_0| \leq 2m(m-1) \Psi$.
This proves \eqref{betasmall}.

\medskip
Now \eqref{betagood} follows immediately from \eqref{betasmall}, using that $\Delta-m = 2m(m-1) + c_t \geq 2m(m-1)$.

\medskip
Next we use Theorem \ref{delta_extendtree} to turn $(T_\beta, \chi_\beta)$ into a
tree decomposition of $G$ of the required  width. Let $D_1', \dots, D_r'$ be
the components of $G \setminus \beta_0$.
In view of Theorem \ref{structured} and \eqref{betagood}, we let
 $(T_0', \chi_0')$ be a structured tree decomposition
of $\beta$ such that for every $t \in T_0'$, we have that
$|\chi_0'(t) \cap \beta_0| \leq c_t + \Delta \Psi (\log n+ k-1)+
(\Delta-m)\Psi$.

Let $i \in \{1, \dots, r\}$.
Since $t_0$ is a center of $T$, it follows that 
$|D_i'| \leq \frac{n}{2}$ for every $i \in \{1, \dots, r\}$.
By induction on $n$, we have that
$$\tw(D_i')   \leq c_t + \Delta\Psi(\log n+ k-1).$$
Let  $(T_i', \chi_i')$ be a tree decomposition of $D_i'$ of width $\tw(D_i')$.
Let $(U,\psi)$ be a tree decomposition of $G$ obtained as in Theorem \ref{delta_extendtree}. Recall that for $u \in U$, $\psi(u)$ is defined as follows.
\begin{itemize}
    \item  If $u \in V(T_0')$, let
      $$\psi(u)=(S_1)_{bad} \cup (\chi_0'(u)  \cap \beta_0) \cup
      \bigcup_{t_i \text{ s.t. }   \chi_0'(u) \cap (Conn(t_i) \setminus \chi(t_0)) \neq \emptyset} (\beta_0 \cap \chi(t_i)).$$
      
    \item If $u \in V(T_i')$ for $i \in \{1, \dots, r\}$, let
    $$\psi(u)=(S_{1})_{bad} \cup \chi_i'(u) \cup (\beta_0 \cap \chi(t_i)).$$
\end{itemize} We claim that 
$$\width(U, \psi) \leq  c_t + \Delta \Psi(\log n+ k).$$
  To prove this claim, we let $u \in V(U)$; we will establish an upper bound on $|\psi(u)|$. By Lemma \ref{noncoop}, we have $|(S_1)_{bad}| \leq 1$. Now let $i \in \{0, \dots, r\}$ be such that $u \in T_i'$.

  Suppose first that  $i>0$. Since $|\beta_0 \cap \chi(t_i)| < m$ (because $(T, \chi)$ is $m$-atomic and therefore $m$-lean by Theorem \ref{atomictolean}), it follows that
$$|\psi(u)| \leq 1+|\chi_i'(u)| + (m-1) \leq \tw(D_i') + m \leq c_t +\Delta \Psi(\log n+ k),$$
  using that $m \leq \Delta$, as required.
  
  Thus we may assume that $i=0$. By Theorem \ref{connectors}\eqref{C-3}, we have that $(\chi_0'(u) \setminus \beta_0) \cap \Hub(\beta)=\emptyset$. Since for distinct $t_1,t_2 \in N_T(t_0)$, the set
      $Conn(t_1) \setminus \beta_0$ is disjoint from and anticomplete to the set
      $Conn(t_2) \setminus \beta_0$, applying Theorem \ref{boundhubs} implies that
      $$|\{t_i : \chi_0'(u) \cap (Conn(t_i) \setminus \beta_0) \neq \emptyset\}| \leq \Psi.$$
Since    by \eqref{betagood}, we have 
$|\chi_0'(u) \cap \beta_0| \leq   c_t + \Delta  \Psi (\log n+ k-1)+
\Psi(\Delta-m)$,
and since $|\beta_0 \cap \chi(t_i)| < m$ for every $i$, 
  we deduce that
  $$|\psi(u)| \leq  1+ c_t + \Delta \Psi (\log n+ k-1)+
  (\Delta-1)  \Psi  \leq 
 c_t +\Delta \Psi (\log n+ k),$$
 as required.
  This completes the proof of  \ref{diminduction}.
    \end{proof}

\section{Algorithmic consequences}\label{algsec}

We now repeat the main points of the  last section of \cite{TWIII} to explain
the algorithmic significance of Theorem \ref{main}. We need the following theorem from \cite{TWIII}:

\begin{theorem}[Abrishami, Chudnovsky, Hajebi, Spirkl \cite{TWIII}]
  \label{generalalg}
Let {\rm\texttt{P}} be a problem which admits an algorithm running in time $\mathcal{O}(2^{\mathcal{O}(k)}|V(G)|)$ on graphs $G$ with a given tree decomposition of width at most $k$. Also, let $\mathcal{G}$ be a class of graphs for which there exists a constant $c=c(\mathcal{G})$ such that $\tw(G)\leq c\log(|V(G)|)$ for all $G\in \mathcal{G}$. Then {\rm\texttt{P}} is polynomial-time solvable in $\mathcal{G}$.
\end{theorem}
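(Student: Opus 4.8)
The plan is to mirror the corresponding argument from the last section of \cite{TWIII}. Fix $G \in \mathcal{G}$ and write $n = |V(G)|$; by hypothesis $\tw(G) \leq c \log n$, where $c = c(\mathcal{G})$ is a constant depending only on the class. First I would invoke a polynomial-time approximation algorithm for treewidth to produce, in time $n^{O(1)}$, a tree decomposition $(T,\chi)$ of $G$ of width $O(\log n)$. Concretely, one uses one of the known algorithms (Robertson and Seymour, or the faster algorithm of Bodlaender, Drange, Dregi, Fomin, Lokshtanov and Pilipczuk) which, given a graph $H$ and an integer $k$, runs in time $2^{O(k)}|V(H)|$ and either returns a tree decomposition of $H$ of width $O(k)$ or correctly reports that $\tw(H) > k$. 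Running this on $G$ with $k = \lceil c \log n \rceil$ (or, if one prefers not to hard-code $c$, successively with $k = 1, 2, 4, 8, \dots$ until it succeeds) takes time $2^{O(\log n)} n^{O(1)} = n^{O(1)}$ and yields the desired $(T,\chi)$ of some width $w = O(\log n)$.

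The second step is simply to run the assumed algorithm for \texttt{P} on the pair $(G, (T,\chi))$. By hypothesis this takes time $\mathcal{O}(2^{\mathcal{O}(w)}|V(G)|)$; since $w = O(\log n)$ we have $2^{\mathcal{O}(w)} = 2^{O(\log n)} = n^{O(1)}$, so the whole computation runs in time polynomial in $n$. This proves Theorem \ref{generalalg}.

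The only point that requires care is the first step, namely that approximating the treewidth can be carried out in polynomial time in our setting. This is precisely where the logarithmic treewidth bound of Theorem \ref{main} is used: a constant-factor polynomial-time approximation of treewidth in general graphs is not known, but the parameterized approximation algorithms, whose running time has the form $2^{O(k)}\cdot\mathrm{poly}(n)$, become polynomial-time as soon as the treewidth $k$ is of order $\log n$. I expect this to be the only subtlety worth spelling out; one should also note that a weaker guarantee (say a quasi-polynomial approximation factor) would not suffice, since the width of the decomposition fed into the \texttt{P}-solver must stay $O(\log n)$ in order for the $2^{\mathcal{O}(w)}$ factor to remain polynomial. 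Everything else is routine bookkeeping.
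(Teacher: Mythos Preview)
Your proposal is correct and is the standard argument. Note, however, that the present paper does not actually prove this theorem: it simply quotes it from \cite{TWIII}, so there is no in-paper proof to compare against. Your sketch---compute a width-$O(\log n)$ tree decomposition via a $2^{O(k)}\cdot n^{O(1)}$-time approximation algorithm for treewidth, then feed it to the assumed solver for \texttt{P}---is exactly the intended reasoning and matches what \cite{TWIII} does.
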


In view of Theorem \ref{main}, Theorem \ref{generalalg} and results of
\cite{Bodlaender1988DynamicTreewidth}, we conclude the following.

\begin{theorem}\label{algfinal}
Let $t\geq 1$ be fixed and {\rm\texttt{P}} be a problem which admits an algorithm running in time $\mathcal{O}(2^{\mathcal{O}(k)}|V(G)|)$ on graphs $G$ with a given tree decomposition of width at most $k$. Then {\rm\texttt{P}} is polynomial-time solvable in $\mathcal{C}_{tt}$. In particular, \textsc{Stable Set}, \textsc{Vertex Cover}, \textsc{Dominating Set} and \textsc{$r$-Coloring} (with fixed $r$) are all polynomial-time solvable in $\mathcal{C}_{tt}$.
\end{theorem}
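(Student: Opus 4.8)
The plan is to simply combine Theorem~\ref{main} with Theorem~\ref{generalalg}, so the proof reduces to checking hypotheses. First I would record that, since $t$ is fixed, Theorem~\ref{main} supplies a constant $c_t$ (depending only on $t$) with $\tw(G)\leq c_t\log|V(G)|$ for every $G\in\mathcal{C}_{tt}$. In other words, $\mathcal{C}_{tt}$ is precisely a class of the kind to which Theorem~\ref{generalalg} applies, taking $\mathcal{G}=\mathcal{C}_{tt}$ and $c(\mathcal{G})=c_t$. Applying Theorem~\ref{generalalg} verbatim then yields the first assertion: every problem $\texttt{P}$ that admits an algorithm running in time $\mathcal{O}(2^{\mathcal{O}(k)}|V(G)|)$ on graphs equipped with a tree decomposition of width at most $k$ is polynomial-time solvable on $\mathcal{C}_{tt}$.

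For the ``in particular'' part I would invoke the classical dynamic-programming results over tree decompositions (see \cite{Bodlaender1988DynamicTreewidth}): each of \textsc{Stable Set}, \textsc{Vertex Cover}, \textsc{Dominating Set} and \textsc{$r$-Coloring} (with $r$ fixed) admits an algorithm whose running time is $\mathcal{O}(2^{\mathcal{O}(k)}|V(G)|)$ once a tree decomposition of width at most $k$ is given. Hence each of these problems satisfies the hypothesis imposed on $\texttt{P}$ in the first assertion, and the conclusion follows for each of them by direct application.

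I do not expect any genuine obstacle here, as the statement is a direct corollary of the two cited theorems. The only point that deserves a word of care is that, to run the dynamic program, one must first actually produce a tree decomposition of width $O(\log|V(G)|)$ in polynomial time rather than merely know that one exists; but this step is already absorbed into Theorem~\ref{generalalg} (whose proof in \cite{TWIII} uses a polynomial-time constant-factor treewidth approximation to construct the required decomposition before running the algorithm for $\texttt{P}$). Thus nothing beyond verifying the hypotheses of Theorem~\ref{generalalg}, as done above, is needed.
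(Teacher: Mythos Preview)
Your proposal is correct and matches the paper's approach exactly: the paper simply states that Theorem~\ref{algfinal} follows from Theorem~\ref{main}, Theorem~\ref{generalalg}, and the results of \cite{Bodlaender1988DynamicTreewidth}, without giving any further details. Your write-up in fact supplies more justification than the paper does, including the remark about constructing the decomposition being absorbed into Theorem~\ref{generalalg}.
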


Let us now discuss another important problem, and that is  \textsc{Coloring}.
It is well-known (and also follows immediately from
 Theorem \ref{logncollections}), that for every $t$ there exists a number
 $d(t)$ such that  every graph  in $\mathcal{C}_t$ has a vertex of degree
 at most  $d(t)$. This implies that all graphs in $\mathcal{C}_t$ 
  have chromatic number at most $d(t)$+1. Also, for each fixed $r$, by Theorem  \ref{algfinal}, $r$-\textsc{Coloring} is polynomial-time solvable in $\mathcal{C}_{tt}$. Now by solving  $r$-\textsc{Coloring} for every $r \in \{1, \dots, d(t)+1\}$, we obtain a polynomial-time algorithm for \textsc{Coloring} in $\mathcal{C}_{tt}$.

\section{Acknowledgments}
We are grateful to Rose McCarty for many helpful discussions.
We thank Reinhard Diestel for being an invaluable source of information about
tree decompositions, and for his generosity with his time. We thank Irena Penev and Kristina  Vu\v{s}kovi\'c for
telling us about important algorithmic problems whose complexity was open for
the class of even-hole free graphs.


\begin{thebibliography} {99}
  
 % \bibitem{Aboulker2020OnGraphs}
 %P. Aboulker, I. Adler, E. J. Kim, N. L. D. Sintiari, and N. Trotignon.
%``On the tree-width of even-hole-free graphs.''

\bibitem{TWI} T. Abrishami, M. Chudnovsky, and K. Vu\v{s}kovi\'c,
  ``Induced subgraphs and tree decompositions I.
  Even-hole-free graphs of bounded degree.''
  {\em J. Comb. Theory Ser. B}, {\bf 157} (2022), 144--175.
  
\bibitem{TWIII} T. Abrishami, M. Chudnovsky, S. Hajebi and S. Spirkl
  ``Induced subgraphs and tree decompositions III.
  Three paths configurations and logarithmic tree-width''.
  {\em Advances in Combinatorics}  (2022).

\bibitem{TWIV} T. Abrishami, M. Chudnovsky, S. Hajebi and S. Spirkl
  ``Induced subgraphs and tree decompositions IV.
  (Even hole, diamond, pyramid)-free graphs''
  {\em to appear in  Electronic Journal of Combinatorics}

  

 \bibitem{TWVII} T. Abrishami, B. Alecu, M. Chudnovsky, S. Hajebi and S. Spirkl
``Induced subgraphs and tree decompositions VII.
  Basic obstructions in $H$-free graphs.''
{\em arXiv.org/abs/2212.02737} (2022)
  
  \bibitem{TWVIII} T. Abrishami, B. Alecu, M. Chudnovsky, S. Hajebi and S. Spirkl
  ``Induced subgraphs and tree decompositions VIII.
  Excluding  a forest in (theta, prism)-free graphs.''
{\em arXiv.org/abs/2301.02138} (2023)


\bibitem{TWXI} B. Alecu, M. Chudnovsky, S. Hajebi and S. Spirkl
  ``Induced subgraphs and tree decompositions XI.
   Local structure for even-hole-free graphs of large treewidth.''
  {\em manuscript}




%\bibitem{wallpaper}
%T. Abrishami, M. Chudnovsky, C. Dibek, S. Hajebi, P. Rz\k{a}\.{z}ewski, S. Spirkl, and K. Vu\v{s}kovi\'c. ``Induced subgraphs and tree decompositions II. Toward walls and their line graphs in graphs of bounded degree.'' {\em arXiv:2108.01162}, (2021). 




\bibitem{BD} P. Bellenbaum and R. Diestel,
  ``Two short proofs concerning tree decompositions'',
  {\em Combinatorics, Probability and Computing}, {\bf  11}
  (2002) ,  541--547



\bibitem{Bodlaender1988DynamicTreewidth}
 H.~L. Bodlaender.
\newblock {``Dynamic programming on graphs with bounded treewidth.''}
\newblock Springer, Berlin, Heidelberg, (1988), pp.~105--118.

%\bibitem{BODLAENDER201586}
%  H.~L. Bodlaender, M. Cygan and S. Kratsch and J. Nederlof. ``Deterministic single exponential time algorithms for connectivity problems parameterized by treewidth.'' {\em Information and Computation} {\bf 243} (2015), 86-111.


  
\bibitem{cliquetw}
  H. Bodlaender and A. Koster. ``Safe separators for treewidth.'' {\em Discrete Mathematics} {\bf 306}, 3 (2006), 337--350. 

    \bibitem{BouchitteT01} V. Bouchitt\'e, and I. Todinca.
``Treewidth and minimal fill-in: Grouping the minimal separators.''
{\em SIAM Journal on Computing} {\bf  31} (2001), 212--232.

\bibitem{kblockpaper} J. Carmesin, R. Diestel, M. Hamann, and F. Hundertmark. ``$k$-Blocks: a connectivity invariant for graphs.'' \emph{SIAM Journal on Discrete Mathematics}, \textbf{28}(4), (2014) 1876--1891.

%\bibitem{courcelle1990monadic} B. Courcelle. ``The monadic second-order logic of graphs. I. Recognizable sets of finite graphs." {\em Information and computation} {\bf 85} (1990), 12--75.

%\bibitem{6108160} M.Cygan, J. Nederlof, and M. Pilipczuk, M. Pilipczuk, J.M.M. Rooij, and J.O. Wojtaszczyk. ``Solving Connectivity Problems Parameterized by Treewidth in Single Exponential Time." {\em 2011 IEEE 52nd Annual Symposium on Foundations of Computer Science} (2011), 150--15.


%\bibitem{daSilva2013Decomposition2-joins}
%  V.G. Murilo da Silva and  and K. Vu{\v{s}}kovi{\'{c}},
%  ``Decomposition of even-hole-free graphs with star cutsets and 2-joins'',
%  {\em J. Comb. Theory Ser. B}, {\bf 103} (2013), 144--183.


\bibitem{TWXV}
  M. Chudnovsky, P. Gartland, S. Hajebi, D. Lokshtanov and S. Spirkl,
  {Induced subgraphs and tree decompositions
    XV. Even-hole-free graphs have logarithmic treewidth},
  {\em manuscript}
  
\bibitem{cygan}
M. Cygan, F.V. Fomin, L. Kowalik, D. Lokshtanov, D. Marx, M. Pilipczuk, M. Pilipczuk, and S. Saurabh. {\em Parameterized algorithms}. Springer, (2015).
  
\bibitem{Davies}
 J. Davies, ``Vertex-minor-closed classes are $\chi$-bounded.'' 
 {\em Combinatorica} {\bf 42} (2022), 1049-1079.

 
\bibitem{diestel} 
R. Diestel. {\em Graph Theory}. Springer-Verlag, Electronic Edition, (2005). 

  
\bibitem{ES} P. Erd\H{o}s and G. Szekeres. ``A combinatorial problem in geometry.'' {\em Compositio Math} {\bf 2} (1935), 463--470.
  

\bibitem{KuhnOsthus} D. K\"{u}hn and D. Osthus.
  ``Induced subgraphs in $K_{s,s}$-free graphs of large average degree'',
  {\em Combinatorica} {\bf 24} (2004), 287--304.
  
 % \bibitem{Marcinbook} M. Pilipczuk. ``Computing Tree Decompositions.'' {\em In: Fomin F., Kratsch S., van Leeuwen E. (eds) Treewidth, Kernels, and Algorithms. Lecture Notes in Computer Science}, vol 12160. Springer, Cham. (2020)  


    \bibitem{Menger}
      K. Menger, "Zur allgemeinen Kurventheorie".
      {\em Fund. Math.} {\bf  10} (1927)  96 -- 115.
      

 % \bibitem{septotree} M. Pilipczuk, N.L.D. Sintiari, S. Thomass\'e and
 %   N. Trotignon. ``(Theta, triangle)-free and (even-hole, $K_4$)-free graphs.
 %   Part 2: bounds on treewidth'', {\em J. Graph Theory} \textbf{97} (4) (2021), 624–641. 

\bibitem{RS-GMII} N. Robertson and P.D.~Seymour. ``Graph minors. II. Algorithmic aspects of tree-width.'' \textit{Journal of Algorithms} \textbf{7}(3) (1986): 309--322.
    
   %   \bibitem{RS-GMV}
%N. Robertson and P.D.~Seymour. ``Graph minors. V. Excluding a planar graph.'' \textit{J. Combin. Theory Ser. B}, 41 (1) (1996), 92–114.
 
 
% \bibitem{RS-GMXIII} N. Robertson and P.D.~Seymour. ``Graph minors. XIII. The disjoint paths problem.''
%{\em J. Comb. Theory Ser. B}, {\bf 63}(1) (1995), 65--110.


 % \bibitem{RS-GMXVI} N. Robertson and P.D.~Seymour. ``Graph minors. XVI. Excluding a non-planar graph.''
%{\em J. Combin. Theory, Ser. B}, {\bf 89} (2003), 43--76.

\bibitem{setRamsey}
  A. Scott, {\em private communication}
  
  
\bibitem{ST} N.L.D. Sintiari and N. Trotignon.
  ``(Theta, triangle)-free and (even-hole, $K_4$)-free graphs. Part 1: Layered wheels'', {\em  J. Graph Theory} \textbf{97} (4) (2021), 475-509. 


\bibitem{Kristina}
  K. Vu{\v{s}}kovi{\'{c}}, 
  ``Even-hole-free graphs: A survey.''
  {\em Applicable Analysis and Discrete Mathematics}, (2010), 219--240.

   \bibitem{Weissauer} D. Weißauer, ``On the block number of graphs.'' \emph{SIAM J. Disc. Math.} {\bf 33}, 1 (2019): 346--357.

      
  \end{thebibliography}
\end{document}